\documentclass[a4paper, 11pt]{amsart}

\usepackage{comment}
\usepackage{graphicx}
\usepackage{tikz-cd}
\input xy
\xyoption{all}

\title[Stability conditions via mass of spherical objects]
{Stability conditions on K3 surfaces via mass of spherical objects}
\author{Kohei Kikuta, Naoki Koseki, and Genki Ouchi}
\date{}

\address{Department of Mathematics, Graduate School of Science, Osaka University, Toyonaka Osaka, 560-0043, Japan.
\newline
\hspace*{4mm}
School of Mathematics, 
The University of Edinburgh, 
James Clerk Maxwell Building, 
Peter Guthrie Tait Road, Edinburgh, Scotland, EH9 3FD, UK.}
\email{kikuta@math.sci.osaka-u.ac.jp}

\address{The University of Liverpool, Mathematical Sciences Building, Liverpool, L69 7ZL, UK.}
\email{koseki@liverpool.ac.uk}

\address{Department of Mathematics, Faculty of Science, Hokkaido University, Kita 10, Nishi 8, Kita-Ku, Sapporo, Hokkaido, 060-0810, Japan.}
\email{genki.ouchi@math.sci.hokudai.ac.jp}

\address{}
\email{}

\makeatletter
 
  \@addtoreset{equation}{section}
 \makeatother

\usepackage{amsmath, amssymb, amsthm, amscd, comment, mathtools, color}
\usepackage{todonotes}
\usepackage[frame,cmtip,curve,arrow,matrix,line,graph]{xy}
\usepackage[mathscr]{euscript}
\usepackage{mathrsfs}
\usepackage{tikz}
\usetikzlibrary{intersections, calc}

\theoremstyle{plain}
\newtheorem{thm}{Theorem}[section]
\newtheorem{prop}[thm]{Proposition}
\newtheorem{def-prop}[thm]{Definition-Proposition}
\newtheorem{lem}[thm]{Lemma}
\newtheorem{cor}[thm]{Corollary}
\newtheorem*{thm*}{Theorem}

\theoremstyle{definition}
\newtheorem{dfn}[thm]{Definition}

\newtheorem*{NaC}{Notation and Convention}
\newtheorem*{ACK}{Acknowledgement}

\theoremstyle{remark}
\newtheorem{rmk}[thm]{Remark}

\newtheorem{ques}[thm]{Question}
\newtheorem{ex}[thm]{Example}

\DeclareMathOperator{\ch}{ch}
\DeclareMathOperator{\td}{td}

\DeclareMathOperator{\id}{id}

\newcommand{\bP}{\mathbb{P}}
\newcommand{\bC}{\mathbb{C}}

\newcommand{\bR}{\mathbb{R}}
\newcommand{\bQ}{\mathbb{Q}}
\newcommand{\bZ}{\mathbb{Z}}
\newcommand{\bH}{\mathbb{H}}
\newcommand{\mcA}{\mathcal{A}}
\newcommand{\mcB}{\mathcal{B}}

\newcommand{\mcD}{\mathcal{D}}

\newcommand{\mcF}{\mathcal{F}}

\newcommand{\mcO}{\mathcal{O}}
\newcommand{\mcP}{\mathcal{P}}
\newcommand{\mcQ}{\mathcal{Q}}

\newcommand{\mcS}{\mathcal{S}}
\newcommand{\mcT}{\mathcal{T}}

\DeclareMathOperator{\Hom}{Hom}

\DeclareMathOperator{\Coh}{Coh}

\DeclareMathOperator{\Ker}{Ker}
\DeclareMathOperator{\Coker}{Coker}
 
\DeclareMathOperator{\NS}{NS}

\DeclareMathOperator{\ext}{ext}
\DeclareMathOperator{\Ext}{Ext}

\DeclareMathOperator{\Stab}{Stab}
\DeclareMathOperator{\Slice}{Slice}

\DeclareMathOperator{\cl}{cl}

\DeclareMathOperator{\GL}{GL}

\DeclareMathOperator{\Cone}{Cone}

\newcommand{\Halg}{\widetilde{H}^{1,1}}

\DeclareMathOperator{\Aut}{Aut}



\DeclareMathOperator{\Obj}{Obj}

\newcommand{\iso}{\xrightarrow{\sim}}

\begin{document}

\begin{abstract}
We prove that a stability condition on a K3 surface is determined by the masses of spherical objects up to a natural $\bC$-action. 
This is motivated by the result of Huybrechts and the recent proposal of Bapat--Deopurkar--Licata on the construction of a compactification of a stability manifold. 
We also construct lax stability conditions in the sense of Broomhead--Pauksztello--Ploog--Woolf associated to spherical bundles. 
\end{abstract}

\maketitle

\setcounter{tocdepth}{1}
\tableofcontents

\section{Introduction}
\subsection{Motivation and Results}
Initiated by the works of Gaiotto--Moore--Neitzke \cite{GMN} and Bridgeland--Smith \cite{BS}, the analogy between the theory of Bridgeland stability conditions and the classical Teichm{\"u}ller theory has been explored in various interesting settings. Motivated by those works, Bapat--Deopurkar--Licata \cite{bdl20} proposed a way to
solve a long-standing open problem of constructing a natural compactification of the space of Bridgeland stability conditions. Let us review their proposal here. Let $\mcD$ be a triangulated category. The space $\Stab(\mcD)$ of Bridgeland stability conditions on $\mcD$ has a natural $\bC$-action. For a fixed set of isomorphism classes of objects $\mcS \subset \Obj(\mcD)$, consider a map to the real projective space (of possibly infinite dimension): 
\begin{equation} \label{eq:mass-intro}
m \colon \Stab(\mcD)/\bC \to \bP^\mcS, \quad [\sigma] \mapsto [(m_\sigma(E))_{E \in \mcS}], 
\end{equation}
where $m_\sigma(E)$ is the so-called mass of an object $E$ with respect to a stability conditions $\sigma$. Then one can ask: 
\begin{ques} \label{ques:homeo}
When is the map $m$ a homeomorphism onto the image?
\end{ques}
In \cite{bdl20}, the authors proved that the closure of the image of $m$ is compact under the assumption that $\mcD$ has a split generator. Hence in that case, the affirmative answer to Question \ref{ques:homeo} would yield a compactification of the space $\Stab(\mcD)/\bC$. 
In our previous work \cite{kko24}, we have given the complete answer to Question \ref{ques:homeo} when $\mcD$ is the derived category of coherent sheaves on a smooth projective curve. The first aim of the present article is to give a partial answer to this question for a K3 surface. 

Let $X$ be a K3 surface, and $\mcD=D^b(X)$. In view of the analogy with the classical Teichm{\"u}ller theory, it is natural to take $\mcS$ to be the set of isomorphism classes of spherical objects on $D^b(X)$. Our first main result is: 
\begin{thm}[Theorem \ref{thm:inj}] \label{thm:inj-intro}
Let $X$ be a K3 surface, $\mcS$ the set of isomorphism classes of spherical objects on $D^b(X)$. Then the mass map (\ref{eq:mass-intro}), restricted to the distinguished component $\Stab^*(X)/\bC$, is injective. 
\end{thm}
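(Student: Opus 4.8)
The plan is to recover a representative $\sigma$ from its mass data in two movements: first reconstruct the central charge up to the $\bC$-action, then upgrade to the full stability condition. Fix $\sigma_1,\sigma_2\in\Stab^*(X)$ whose images under (\ref{eq:mass-intro}) agree; since all masses are positive reals, this means $m_{\sigma_1}(E)=c\,m_{\sigma_2}(E)$ for every spherical object $E$ and a single $c>0$. The imaginary part of the $\bC$-action rescales all masses of $\sigma_2$ uniformly, so after moving $\sigma_2$ inside its orbit I may assume $m_{\sigma_1}(E)=m_{\sigma_2}(E)$ for all spherical $E$; the goal is then $[\sigma_1]=[\sigma_2]$, and the residual freedom is the rotation part of $\bC$, which fixes every mass. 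The first step is to prove that for every $\tau\in\Stab^*(X)$ and every class $\delta\in\Halg(X,\bZ)$ with $\langle\delta,\delta\rangle=-2$, the minimum of $m_\tau(E)$ over spherical $E$ with $v(E)=\delta$ equals $|Z_\tau(\delta)|$: the bound $m_\tau(E)\ge|Z_\tau(\delta)|$ is the triangle inequality over Harder--Narasimhan factors, with equality exactly when $E$ is $\tau$-semistable, and a $\tau$-semistable spherical object of class $\delta$ always exists (take a $\tau'$-stable one for generic $\tau'$ near $\tau$, which is spherical because a stable object with $v^2=-2$ is rigid and simple, then use closedness of semistability as $\tau'\to\tau$). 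Minimizing the two equal mass functions over each class yields $|Z_{\sigma_1}(\delta)|=|Z_{\sigma_2}(\delta)|$ for all $(-2)$-classes $\delta$.

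Writing $Z_{\sigma_i}=\langle-,\Omega_i\rangle$ with $\Omega_i\in\Halg(X,\bC)$, set $Q_i(\delta)=|Z_{\sigma_i}(\delta)|^2=\langle\delta,\operatorname{Re}\Omega_i\rangle^2+\langle\delta,\operatorname{Im}\Omega_i\rangle^2$, a real quadratic form on $\Halg(X,\bR)$. By the previous step $Q_1$ and $Q_2$ agree on all integral $(-2)$-classes. Since $\Halg(X,\bZ)$ is an even lattice of signature $(2,\rho)$ with $\rho\ge 1$, its $(-2)$-vectors are Zariski dense in the quadric $\{\langle\delta,\delta\rangle=-2\}$, and a homogeneous quadratic form vanishing on this nondegenerate quadric vanishes identically, so $Q_1=Q_2$ as forms. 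Because $Q_i$ is the squared modulus of the surjective $\bR$-linear functional $\langle-,\Omega_i\rangle\colon\Halg(X,\bR)\to\bC$, equality of the $Q_i$ forces $\Omega_2=e^{i\theta}\Omega_1$ or $\Omega_2=e^{i\theta}\overline{\Omega_1}$ for some $\theta$. The conjugate alternative is ruled out because $\sigma_1,\sigma_2$ lie in the distinguished component, hence their periods lie in the same connected component of the period domain, whereas complex conjugation reverses the orientation of the positive-definite plane $\langle\operatorname{Re}\Omega,\operatorname{Im}\Omega\rangle$ and so interchanges the two components. Thus $Z_{\sigma_2}=e^{i\theta}Z_{\sigma_1}$; applying the rotation part of the $\bC$-action to $\sigma_1$, which preserves all masses, I may henceforth assume $Z_{\sigma_1}=Z_{\sigma_2}=:Z$ with $m_{\sigma_1}=m_{\sigma_2}$ still.

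With a common central charge, a spherical $E$ is $\sigma_i$-semistable if and only if $m_{\sigma_i}(E)=|Z(E)|$; since the masses and $Z$ now coincide, $\sigma_1$ and $\sigma_2$ have exactly the same semistable spherical objects. I would then invoke Huybrechts' rigidity result -- a stability condition in $\Stab^*(X)$ is determined by its central charge together with its family of (semi)stable spherical objects -- to conclude $\sigma_1=\sigma_2$, hence $[\sigma_1]=[\sigma_2]$.

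The main obstacle is precisely this last passage from the central charge to the stability condition itself: masses record only $|Z|$ on semistable objects and carry no direct information about phases, so the period map $\sigma\mapsto Z_\sigma$ is merely a covering of its image and cannot separate points of a single fibre by itself. The decisive input is the special geometry of K3 categories -- the abundance and rigidity of spherical objects -- which lets the family of semistable spherical objects determine the slicing within a fibre. A secondary technical point needing care is the existence and sphericality of a semistable representative in every $(-2)$-class at an arbitrary, possibly non-generic, point of $\Stab^*(X)$; this rests on the nonemptiness results from Bridgeland and Bayer--Macr\`i wall-crossing on K3 surfaces.
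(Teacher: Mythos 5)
Your argument is sound up to and including the identification of the central charges, but the final step contains a genuine gap. Huybrechts' theorem (Theorem \ref{thm:Huyb} in the paper, \cite[Theorem 0.2]{huy12}) does \emph{not} say that a stability condition in $\Stab^*(X)$ is determined by its central charge together with the \emph{set} of semistable spherical objects; it requires that the spherical objects be semistable \emph{of the same phase}, i.e.\ $E \in \mcP_1(\phi) \Leftrightarrow E \in \mcP_2(\phi)$ for every $\phi \in \bR$. Knowing $Z_1 = Z_2$ and that $E$ is semistable for both only pins down $\phi_{\sigma_1}(E) - \phi_{\sigma_2}(E)$ modulo $2$, since $Z(E) \in \bR_{>0}\exp(\sqrt{-1}\pi\phi)$ determines $\phi$ only up to even integers. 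Your closing paragraph acknowledges that masses carry no phase information and then asserts that the rigidity of spherical objects "lets the family of semistable spherical objects determine the slicing within a fibre" --- but that is precisely the statement that needs proof, and it is where the paper does real work (Section \ref{sec:shareP}). The paper's key Lemma \ref{lem:keyphase} uses the $2$-Calabi--Yau property: if $\langle v(E), v(F)\rangle \neq 0$ then $\hom(E, F[l]) \neq 0$ for some $l$, and Serre duality gives $\hom(F[l], E[2]) \neq 0$ as well, which traps $\phi_{\sigma_i}(F)+l$ in the window $[\phi_{\sigma_i}(E), \phi_{\sigma_i}(E)+2]$, strictly when $Z(F) \notin \bR\cdot Z(E)$; combined with one anchored phase (which must be arranged when normalizing the representatives, as in Lemma \ref{lem:goodlift}(2)) this forces $\phi_{\sigma_1}(F) = \phi_{\sigma_2}(F)$, and a propagation argument through auxiliary spherical classes handles the remaining cases. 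Without some such argument your proof does not close.

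On the earlier steps: your recovery of $|Z_i|$ on $(-2)$-classes via mass minimization matches the paper (Lemmas \ref{lem:massstability}, \ref{lem:sphexist}, Corollary \ref{cor:massmin}), and your route to $Z_1 = Z_2$ via the quadratic forms $Q_i(\delta) = |Z_i(\delta)|^2$ is a genuinely different and attractive alternative to the paper's explicit computation with the basis of Lemma \ref{lem:goodbasis} and the auxiliary classes $w_{ij} = v_j + \langle v_i, v_j\rangle v_i$. It buys a cleaner conceptual statement, but the assertion that the integral $(-2)$-vectors are Zariski dense in the real quadric $\{\delta^2 = -2\}$ is itself nontrivial (it follows from Borel density applied to the arithmetic group $O(\Halg(X,\bZ))$ acting on the quadric, using that the form is indefinite of rank $\geq 3$) and should be proved or referenced rather than asserted; the paper's hands-on computation avoids this entirely. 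The exclusion of $\overline{Z}_1$ via connected components of $\mcP^+(X)$ is the same as in the paper.
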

This is already a surprising result: we can recover stability conditions (up to the $\bC$-action) only from the information of spherical objects. A similar result was previously proved by Huybrechts \cite{huy12}. Indeed, Huybrechts' result will play a key role in the proof of Theorem \ref{thm:inj-intro}. 
At this moment, we do not know if the mass map in this case is a homeomorphism onto the image or not. 

\begin{rmk}
In \cite{bdl20}, they introduced a variant $m'$ of the mass map which also encodes information of phases of objects. They proved in a very general setting that the product $m \times m' \colon \Stab(\mcD)/\bC \to \bP^\mcS \times \bP^\mcS$ is always injective. 
In the case of a K3 surface, we obtained a stronger result without using $m'$. 
\end{rmk}

Let $M$ denote the closure of $m(\Stab^*(X)/\bC) \subset \bP^\mcS$. It would be also interesting to investigate points in the boundary of $M$. 
For a stability condition $\sigma$, the mass $m_\sigma(E)$ is always positive. Natural points in the boundary of $M$ would appear if we look at limits where the mass of a fixed spherical object $A$ becomes zero. 
There are two different types of such limits: 
\begin{enumerate}
\item A hom-functional $h_A =[(\overline{\hom}(A, E))_{E \in \mcS}] \in \bP^\mcS$, where $\overline{\hom}(A, E)$ is defined as follows: 
\[
\overline{\hom}(A, E) \coloneqq \begin{cases}
\sum_{i \in \bZ} \dim\Ext^i(A, E) & (E\neq A[i]\text{ for any }i\in\bZ), \\
0 & (otherwise). 
\end{cases}
\]
\item A lax stability condition, which is a weaker notion of stability conditions, allowing the mass of the spherical object $A$ to be zero. 
\end{enumerate}
The hom-functional $h_A$ was introduced in \cite{bdl20}, and they showed that 
$h_A$ lies on the boundary $\partial M$. 
The notion of lax stability conditions was introduced in \cite{bppw} in order to study a partial compactification of the space of stability conditions. 
Our second main result is the following: 

\begin{thm}[Theorems \ref{thm:laxstab} and \ref{prop:laxvshom}] 
There is a spherical bundle $A$ on $X$ and a lax stability condition $\sigma_0$ on $D^b(X)$ satisfying the following properties:
\begin{enumerate}
\item $A$ is massless, equivalently, we have $m_{\sigma_0}(A)=0$. 
\item We have $m(\sigma_0) \in \partial M$.
\item We have $m(\sigma_0) \neq h_A$ in $\bP^\mcS$.
\end{enumerate}
\end{thm}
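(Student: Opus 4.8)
The plan is to realize $\sigma_0$ as a limit, inside the distinguished component $\Stab^*(X)$, of geometric stability conditions along a path approaching the wall on which the Mukai vector $v_0 = v(A)$ becomes orthogonal to the central charge vector. I will carry out the computation assuming $\NS(X) = \bZ H$ with $H^2 = 2d$ for clarity, the general case being analogous. Choose a primitive spherical Mukai vector $v_0 = (r, cH, s)$, so that $\langle v_0, v_0 \rangle = c^2 H^2 - 2rs = -2$, subject to a minimality condition on $(r,c)$. The minimality guarantees the two facts used later: there is a $\mu_H$-stable locally free sheaf $A$ with $v(A) = v_0$ which is the unique spherical object (up to shift) with this class, and on the wall $v_0^\perp$ the only spherical classes acquiring zero central charge are $\pm v_0$. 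Take $A$ to be such a bundle.

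Next I would locate the wall. For the geometric stability conditions $\sigma_{\beta,\omega} = (Z_{\beta,\omega}, \mcA_{\beta,\omega})$ with $Z_{\beta,\omega}(-) = \langle \exp(\beta + i\omega), v(-) \rangle$, solving $Z_{\beta,\omega}(A) = 0$ forces $\beta_0 = (c/r)H$ from the vanishing of the imaginary part, and then $\omega_0 = (1/(r\sqrt{d}))H$ from the vanishing of the real part using the spherical relation. I would check that the period $\Omega_0 = \exp(\beta_0 + i\omega_0)$ lies in Bridgeland's positive cone and, by minimality of $v_0$, on no spherical wall other than $v_0^\perp$, so that a small perturbation off $v_0^\perp$ stays in the open locus of genuine stability conditions and $\sigma_0$ is a boundary point of a path $\sigma_t \to \sigma_0$ with $\sigma_t \in \Stab^*(X)$. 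Constructing the lax stability condition is the first main step: I would take the limiting heart together with $Z_0 = \langle \Omega_0, - \rangle$ and verify the Broomhead--Pauksztello--Ploog--Woolf axioms, the point being that $Z_0$ vanishes on the nonzero object $A$, whose image spans the massless subcategory, while the induced datum on the quotient is an honest stability condition. I expect this verification, namely controlling the degeneration of the heart across the spherical wall and identifying the massless part as exactly $\langle A \rangle$, to be the main obstacle.

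Granting the construction, the three properties follow. Property (1) is immediate: $A$ is $\sigma_0$-semistable with $Z_0(A) = 0$, so $m_{\sigma_0}(A) = 0$. For property (2), continuity of masses gives $m(\sigma_0) = \lim_t m(\sigma_t) \in M = \overline{m(\Stab^*(X)/\bC)}$; but every genuine $\sigma \in \Stab^*(X)$ has $m_\sigma(A) > 0$, whereas the $A$-coordinate of $m(\sigma_0)$ vanishes, so $m(\sigma_0) \notin m(\Stab^*(X)/\bC)$ and hence lies in $\partial M$. One checks $m(\sigma_0)$ is a well-defined point of $\bP^\mcS$: the massless classes form $\bR v_0$, so any spherical $E$ with $v(E) \notin \bR v_0$ contributes a positive coordinate.

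For property (3), I would exhibit a spherical object $E$ completely orthogonal to $A$, i.e. $\Ext^i(A,E) = 0$ for all $i$, so that $\overline{\hom}(A,E) = 0$. Lattice-theoretically such classes exist: $v_0^\perp$ contains $(-2)$-classes, and since $\langle v_0, v_0 \rangle = -2$ we have $v_0 \notin v_0^\perp$, so any $v(E) \in v_0^\perp$ is automatically not proportional to $v_0$, hence non-massless with $m_{\sigma_0}(E) > 0$. Realizing such a class by a genuinely orthogonal spherical bundle and verifying the $\Ext$-vanishing is a direct, if somewhat involved, computation. Picking in addition any spherical $E'$ with $\overline{\hom}(A,E') > 0$ and $m_{\sigma_0}(E') > 0$, e.g. $E' = \mcO_X$, one obtains
\[
\frac{\overline{\hom}(A,E)}{\overline{\hom}(A,E')} = 0 \neq \frac{m_{\sigma_0}(E)}{m_{\sigma_0}(E')},
\]
where both denominators are positive, so the two functionals are not proportional on the coordinates $E, E'$, whence $m(\sigma_0) \neq h_A$ in $\bP^\mcS$.
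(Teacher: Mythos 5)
Your construction of $\sigma_0$ is the same as the paper's: degenerate Bridgeland's geometric stability conditions $(Z_{B,\omega},\mcA(B,\omega))$ to the point $B_0=\frac{1}{r_0}D_0$, $\omega_0=\frac{1}{r_0\sqrt d}H$ where $Z(A)=0$, with a minimality condition on the rank guaranteeing that the massless subcategory is exactly $\langle A\rangle$. Be aware, though, that what you call ``the main obstacle'' is where almost all of the paper's work lies: the Harder--Narasimhan property for the degenerate central charge, local finiteness of the limiting slicing, the support property for massive stable objects, and--crucially for property (2)--the convergence $\sigma_\alpha\to\sigma_{\alpha_0}$ in the generalized metric on slicings (not just convergence of central charges), which is what puts $\sigma_{\alpha_0}$ in $\overline{\Stab^*(X)}$ and makes the continuity argument for $\bP m$ legitimate. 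Your treatment of (1) and (2) otherwise matches the paper.

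The genuine gap is in property (3). Your plan is to find a spherical object $E$ with $\Ext^i(A,E)=0$ for all $i$, so that $\overline{\hom}(A,E)=0$ while $m_{\sigma_0}(E)>0$. This fails already for $\rho(X)=1$: there $\Halg(X,\bZ)$ has signature $(2,1)$, so $v_0^\perp$ is positive definite of rank $2$ and contains no $(-2)$-classes at all; consequently every spherical $E\not\cong A[i]$ has $\overline{\hom}(A,E)\geq|\chi(A,E)|=|\langle v_0,v(E)\rangle|>0$, and no completely orthogonal $E$ exists. Since the theorem is asserted for an arbitrary K3 surface, this is fatal to the strategy. Even when $\rho(X)\geq 2$, a $(-2)$-class in $v_0^\perp$ only gives $\chi(A,E)=0$, i.e.\ $\hom-\ext^1+\ext^2=0$, not the vanishing of each $\Ext^i(A,E)$, and you do not explain how to produce an actual object realizing full orthogonality. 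The paper sidesteps all of this with an arithmetic argument: for the spherical classes $\delta_\ell=e^{\ell H}\cdot\delta_0$ one computes $m_{\sigma_{\alpha_0}}(E_\ell)=|\ell|\sqrt d\,\sqrt{r_0^2d\ell^2+4}$, and a quadratic-reciprocity/Dirichlet lemma produces $\ell_0,\ell_1$ for which the ratio $m_{\sigma_{\alpha_0}}(E_{\ell_1})/m_{\sigma_{\alpha_0}}(E_{\ell_0})$ is irrational, whereas every ratio of coordinates of $h_A$ is rational. This works in every Picard rank and in fact shows $\bP m(\sigma_{\alpha_0})\neq h_{A'}$ for \emph{every} spherical object $A'$, which is stronger than what you aim for.
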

In fact, we will construct lax stability conditions associated to any spherical classes satisfying mild numerical assumptions. When $\rho(X)=1$, Mukai vectors of any spherical bundles satisfy those assumptions. See Subsection \ref{sec:constructlax} for more details.


\subsection{Relation with other works}
There have been various attempts to construct a natural (partial) compactification of the space of Bridgeland stability conditions \cite{bdl20, bol23, bppw, kko24, halpernleistner25}. Most of the works in the literature consider examples arising from quivers. 
In our previous work \cite{kko24}, the first algebro-geometric case of curves has been worked out. The current article provides the first progress in this direction for higher dimensional varieties. 

While writing the first draft of this article, Laura Pertusi informed us that they obtained a similar result as in Theorem \ref{thm:inj-intro} for a K3 surface of Picard rank one. 

In \cite{clsy24_2, clsy24, TYChou}, the authors constructed some examples of weak stability conditions on surfaces. 

\subsection{Future directions}
In the case of a K3 surface, we currently do not know if the mass map is a homeomorphism onto the image. This is an important open problem towards the construction of a compactification of $\Stab^*(X)/\bC$. 
It would be also interesting to apply our results to the study of the autoequivalence group of a K3 surface by using its action on the closure $M$. 
In the case of an elliptic curve, we gave a Nielsen--Thurston type classification of autoequivalences in terms of the behavior of fixed points of this action in \cite{kko24}. 
See \cite{fl23} for the progress in this direction for a K3 surface of Picard rank one.

\begin{ACK}
We would like to thank Arend Bayer and Nick Rekuski for related discussions. 
We would like to thank Jon Woolf for letting us know the update on their article \cite{bppw}. 
We would also like to thank Laura Pertusi for sharing their work which is closely related to this article. 

K.K. is indebted to Arend Bayer and the University of Edinburgh for their kind hospitality. 
K.K. and G.O. are grateful to the Hausdorff Research Institute for Mathematics funded by 
the Deutsche Forschungsgemeinschaft (DFG, German Research Foundation) 
under Germany's Excellence Strategy – EXC-2047/1 – 390685813,
for the pleasant working conditions. 
A substantial portion of this work was carried out while G.O. was affiliated with Nagoya University.

K.K. is supported by JSPS Overseas Research Fellow and JSPS KAKENHI Grant Number 21K13780. 
N.K. is supported by the Engineering and Physical Sciences Research Council [EP/Y009959/1]. 
G.O. is supported by JSPS KAKENHI Grant Number 19K14520.

\end{ACK}
\begin{NaC}
~
\begin{itemize}
\item For a complex number $z \in \bC$, let $\Re z$ and $\Im z$ be the real part and the imaginary part of $z$ respectively.
\item We work over the complex number field $\bC$.
\item A K3 surface means an algebraic K3 surface.
\item For a variety $X$, let $D^b(X)$ denote the bounded derived category of coherent sheaves on $X$.
\item For a $\bZ$-module $M$, we put $M_\bR\coloneqq M \otimes \bR$.
\item For a smooth projective variety $X$, we define
\[ \hom(E,F)\coloneqq \dim \Hom(E,F),~ \ext^i(E,F)\coloneqq \dim \Ext^i(E,F) \]
for $E,F \in D^b(X)$ and $i\in\bZ$. 
\end{itemize}
\end{NaC}

\section{Bridgeland stability conditions}

\subsection{Slicings of triangulated categories}
In this subsection, we recall the notion of slicings of triangulated categories following \cite{bri}.

Let $\mcD$ be a triangulated category.

\begin{dfn}[{\cite[Definition 3.3]{bri}}]\label{def:slicing}
A \textit{slicing} of the triangulated category $\mcD$ is a collection $\mcP=\{\mcP(\phi) \}_{\phi \in \bR}$ of full additive subcategories of $\mcD$ such that the following conditions hold: 
\begin{enumerate}
\item For all $\phi\in\bR$, we have $\mcP(\phi+1)=\mcP(\phi)[1]$.
\item For $\phi_1>\phi_2$ and $A_i \in\mcP(\phi_i)$, we have $\Hom(A_1,A_2)= 0$.  
\item For any non-zero object $E \in \mcD$, there exist real numbers 
\[\phi_1 > \cdots >\phi_n\]
and a sequence 
\[
\xymatrix@C=0.4em@R=0.6em{
0_{\ } \ar@{=}[r] & E_0 \ar[rrrr] &&&& E_1 \ar[rrrr] \ar[dll] &&&& E_2
\ar[rr] \ar[dll] && \ldots \ar[rr] && E_{n-1}
\ar[rrrr] &&&& E_n \ar[dll] \ar@{=}[r] &  E_{\ } \\
&&& A_1 \ar@{-->}[ull] &&&& A_2 \ar@{-->}[ull] &&&&&&&& A_n \ar@{-->}[ull] 
}
\]
of exact triangles in $\mcD$ such that each $A_i$ is contained in $\mcP(\phi_k)$ for $1 \leq i \leq n$. Then we put \[\phi^+_\mcP(E)\coloneqq\phi_1,~ \phi^-_\mcP(E)\coloneqq\phi_n.\] 
This filtration is called the \textit{Harder--Narasimhan filtration of $E$}. We put $\phi^+_\mcP(E)\coloneqq\phi_1$ and $\phi^-_\mcP(E)\coloneqq\phi_n$. 
\end{enumerate}
\end{dfn}
Note that for a non-zero object $E \in \mcD$, a sequence of exact triangles in Definition \ref{def:slicing} (3) is uniquely determined up to isomorphisms. 

For a slicing $\mcP$ and an interval $I \subset \bR$, let $\mcP(I)$ denote the extension closed full subcategory of $\mcD$ generated by subcategories $\mcP(\phi)$ for $\phi \in I$.
If the length of an interval $I$ is less than one, $\mcP(I)$ is a quasi-abelian category by {\cite[Lemma 4.3]{bri}}. In particular, $\mcP(\phi)$ is a quasi-abelian category for any $\phi \in \bR$. We will freely use the terminologies in {\cite[Section 4]{bri}}.
On the other hand, as stated in {\cite[Section 3]{bri}}, $\mcP((\phi, \phi+1])$ is the heart of a bounded t-structure in $\mcD$ for any $\phi \in \bR$, in particular, $\mcP((\phi, \phi+1])$ is an abelian category.

\begin{dfn}[{\cite[Definition 5.7]{bri}}]\label{def:locally finite}
A slicing $\mcP$ of the triangulated category $\mcD$ is called \textit{locally finite} if there exists $\varepsilon > 0$ such that the category $\mcP((\phi-\varepsilon, \phi+\varepsilon))$ is a quasi-abelian category of finite length for each $\phi \in \bR$. 
\end{dfn}
The notion of finite length quasi-abelian categories is defined using strict monomorphisms and strict epimorphisms as in {\cite[Section 4]{bri}} and {\cite[Section 2.1]{bppw}}.
We denote the set of locally finite slicings of $\mcD$ by $\Slice(\mcD)$. 
For slicings $\mcP, \mcQ \in \Slice(\mcD)$, we define 
\[
d(\mcP, \mcQ)\coloneqq
\sup_{0 \neq E \in \mcD} \max\left\{
|\phi^-_{\mcP}(E)-\phi^-_\mcQ(E)|, |\phi^+_{\mcP}(E)-\phi^+_\mcQ(E)|
\right\}. 
\]
Then $(\Slice(\mcD),d)$ is a generalized metric space by {\cite[Section 6]{bri}}.

\subsection{Stability conditions on triangulated categories}\label{sec:stab}
In this subsection, we recall the notion of stability conditions following \cite{bri}.

Let $\mcD$ be a triangulated category.
Fix a finitely generated free abelian group $\Lambda$ with a surjective group homomorphism $\cl: K_0(\mcD) \to \Lambda$. 
Let $\alpha:\Aut(\mcD) \to \GL(\Lambda)$ be a group homomorphism satisfying $\cl \circ [\Phi]=\alpha(\Phi) \circ \cl$ for all $\Phi \in \Aut(\mcD)$, where $[\Phi] : K_0(\mcD) \iso K_0(\mcD)$ is the linear automorphism of $K_0(\mcD)$ induced by $\Phi$.
For a group homomorphism $Z: \Lambda \to \bC$ and an object $E \in \mcD$, denote $Z(\cl(E))$ by $Z(E)$ for simplicity.

\begin{dfn}[{\cite[Definition 5.1]{bri}}]\label{def:pre-stability}
A \textit{pre-stability condition} on the triangulated category $\mcD$ is a pair $\sigma=(Z, \mcP)$ of a group homomorphism $Z:\Lambda \to \bC$ and a slicing $\mcP$ of $\mcD$ satisfying the following condition: for any $\phi \in \bR$ and $0 \neq E \in \mcP(\phi)$, 
we have $Z(E) \in \bR_{>0}\exp(\sqrt{-1}\pi \phi)$. 
\end{dfn}

Let $\sigma=(Z, \mcP)$ be a pre-stability condition on $\mcD$. The group homomorphism $Z$ is called the \textit{central charge} of $\sigma$. For each $\phi \in \bR$, a non-zero object $E \in \mcP(\phi)$ is called a $\sigma$-\textit{semistable} object of \textit{phase} $\phi$. 
For a $\sigma$-semistable object $E \in \mcP(\phi)$, let $\phi_\sigma(E)$ denote the phase $\phi$ of $E$ with respect to $\sigma$.
By {\cite[Lemma 5.2]{bri}}, the quasi-abelian category $\mcP(\phi)$ is abelian for any $\phi \in \bR$. For $\phi \in \bR$, a simple object $E \in \mcP(\phi)$ is called a $\sigma$-\textit{stable} object of phase $\phi$. 
For $1 \leq i \leq n$, each object $A_i \in \mcP(\phi_i)$ in the Harder--Narasimhan filtration of $E$ as in  Definition \ref{def:slicing} (3) is called a $\sigma$-\textit{semistable factor} of $E$. 

\begin{dfn}[{\cite[Section 5]{bri}}]\label{def:mass}
Let $\sigma=(Z, \mcP)$ be a pre-stability condition on $\mcD$. For a non-zero object $E \in \mcD$, we define the \textit{mass} $m_\sigma(E)$ of $E$ with respect to $\sigma$ as
\[ m_\sigma(E)\coloneqq\sum_{i=1}^{n}|Z(A_i)|, \]
where $(A_1, \cdots, A_n)$ is the sequence of all $\sigma$-semistable factors of $E$.
\end{dfn}

For a pre-stability condition $\sigma=(Z,\mcP)$ on $\mcD$ and a non-zero object $E \in \mcD$, we have
\begin{equation}\label{eq:triangle inequality}
 m_\sigma(E) \geq |Z(E)|  
\end{equation}
by the triangle inequality.

We fix a norm $||-||$ on $\Lambda_\bR$. 
The definition of stability conditions on $\mcD$ is as follows:

\begin{dfn}[\cite{bri},\cite{KS08}]\label{def:support property}
A pre-stability condition $\sigma=(Z, \mcP)$ on $\mcD$ is a \textit{stability condition} on $\mcD$ if $\sigma$ satisfies the \textit{support property}, i.e., there exists a constant $C>0$ such that for any $\sigma$-semistable object $E \in \mcD$, we have 
\[ \|\cl(E)\|<C|Z(E)|. \]
\end{dfn}

Let $\sigma=(Z,\mcP)$ be a stability condition on $\mcD$.
By {\cite[Appendix B]{bm11}}, the slicing $\mcP$ is locally finite. In particular, $\mcP(\phi)$ is of finite length. Therefore, for $\phi \in \bR$ and a $\sigma$-semistable object $E \in \mcP(\phi)$, there is a \textit{Jordan--H\"older filtration}
\[ 0=E_0 \subset E_1 \subset \cdots \subset E_n=E \]
in $\mcP(\phi)$ such that the quotient $E_i/E_{i-1}$ is $\sigma$-stable of phase $\phi$ for $1 \leq i \leq n$.
Then each quotient $E_i/E_{i-1}$ is called a $\sigma$-\textit{stable factor} of $E$. 
Indeed, for a $\sigma$-semistable object $E$, $\sigma$-stable factors of $E$ together with their multiplicities do not depend on a choice of a Jordan--H\"older filtration of $E$. 

Denote the set of all stability conditions on $\mcD$ by $\Stab(\mcD)$. By definition, we have
\[ \Stab(\mcD) \subset \Hom(\Lambda, \bC) \times \Slice(\mcD). \]
For $\sigma=(Z, \mcP), \tau=(W,\mcQ) \in \Hom(\Lambda, \bC) \times \Slice(\mcD)$, we define
\begin{equation}\label{eq:metric}
d(\sigma, \tau)\coloneqq \max\left\{\|Z-W\|, d(\mcP, \mcQ)\right\},
\end{equation}
where $\|-\|$ is the operator norm on $\Hom(\Lambda, \bC)$ given by
\begin{equation}\label{eq:operator norm}
\|Z\|\coloneqq\sup\left\{|Z(v)| : v\in\Lambda_\bR \text{ with }\|v\|=1 \right\}.
\end{equation}
The set $\Hom(\Lambda, \bC) \times \Slice(\mcD)$ is a generalized metric space. In particular, $\Stab(\mcD)$ is also a generalized metric space. By {\cite[Theorem 1.2]{bri}}, the projection map $\Stab(\mcD) \to \Hom(\Lambda, \bC)$ is a local homeomorphism and $\Stab(\mcD)$ is a complex manifold. The space $\Stab(\mcD)$ is called the space of stability conditions on $\mcD$.
The group $(\bC,+)$ acts on $\Hom(\Lambda, \bC) \times \Slice(\mcD)$ as follows:
For $\lambda  \in \bC$ and $\sigma=(Z,\mcP) \in \Hom(\Lambda, \bC) \times \Slice(\mcD)$, 
\begin{equation*}
\sigma\cdot\lambda \coloneqq (\exp(-\sqrt{-1}\pi\lambda) \cdot Z, \{\tilde{\mcP}(\phi)\}_{\phi \in \bR})
\end{equation*}
where we define $\tilde{\mcP}(\phi)\coloneqq\mcP(\phi+\Re(\lambda))$ for $\phi \in \bR$.

We often use the characterization of pre-stability conditions via the hearts of bounded t-structures.

\begin{rmk}[{\cite[Proposition 5.3]{bri}}]\label{rem:slicing and heart}
Giving a pre-stability condition $(Z, \mcP)$ is equivalent to giving a pair $(Z, \mcA)$ of a group homomorphism $Z \colon \Lambda \to \bC$ and the heart $\mcA$ of a bounded t-structure satisfying the following conditions: 
    \begin{itemize}
    \item[(a)] For any non-zero object $E \in \mcA$, we have $Z(E) \in \bH \cup \bR_{<0}$.
    For a non-zero object $E \in \mcA$, an object $E$ is $Z$-semistable if for any non-zero subobject $A$ of $E$, we have $\arg Z(A) \leq \arg Z(E)$, where we regard $\arg:\bH \cup \bR_{<0} \to (0,\pi]$.
    \item[(b)] For any object $E \in \mcA$, there is a filtration 
    \[ 0=E_0 \subset E_1 \subset \cdots \subset E_n=E \]
    in $\mcA$ such that for any $1 \leq i \leq n$, the quotient object $A_i\coloneqq E_i/E_{i-1}$ is $Z$-semistable with 
    \[ \arg Z(A_1)> \cdots > \arg Z(A_n).\] 
\end{itemize}
For a pre-stability condition $(Z, \mcP)$, we have the pair $(Z,\mcA)$ satisfying the conditions (a) and (b), where we put $\mcA \coloneqq \mcP((0,1])$. Conversely, for a pair $(Z, \mcA)$ satisfying the conditions (a) and (b), we have the pre-stability condition $(Z, \mcP)$ on $\mcD$ as follows:\\ For a real number $\phi \in (0,1]$, we define
\[ \mcP(\phi) \coloneqq \left\{ E \in \mcA : \text{$E$ is $Z$-semistable with $\arg Z(E)=\phi \pi$}\right\} \cup \{0\}.  \]
A general case is defined by the property $\mcP(\phi+1)=\mcP(\phi)[1]$ for $\phi \in \bR$.
Then, by the abuse of notation, we identify the pre-stability condition $(Z, \mcP)$ with the pair $(Z, \mcA)$.
\end{rmk}

\subsection{Stability conditions and infinite dimensional projective spaces}
We keep the notations as in the previous subsection. 
In this subsection, we recall the definition of the continuous map from the space $\Stab(\mcD)$ of stability conditions on $\mcD$ to the infinite dimensional projective space following \cite{bdl20}.

Fix a set $\mcS$ of isomorphism classes of objects in $\mcD$.  By {\cite[Proposition 8.1]{bri}}, the \textit{mass map}
\[
m \colon \Stab(\mcD) \to \bR^{\mcS}_{\geq 0}, \quad 
\sigma \mapsto (m_\sigma(E))_{E \in \mcS}. 
\]
is continuous. 

We define the projective space $\bP^{\mcS}_{\geq 0}$ as the topological space
\[ \bP^\mcS_{\geq 0}\coloneqq (\bR_{\geq 0}^\mcS\setminus\{0\})/\bR_{>0},\]
where the topology on $\bP^\mcS_{\geq 0}$ is defined by 
the quotient topology of the product topology on $\bR^{\mcS}_{\geq 0} \setminus \{0\}$. 
Then $m$ induces the continuous map 
\begin{equation} \label{eq:projmass}
    \bP m \colon \Stab(\mcD)/\bC \to \bP^{\mcS}_{\geq 0}. 
\end{equation}


\subsection{Stability conditions on K3 surfaces}\label{Sec:K3 stab}
Let $X$ be a K3 surface. 
In this subsection, we recall the description of the distinguished connected component of the space of stability conditions on the derived category $D^b(X)$ of the K3 surface $X$, following \cite{bri08}.

\subsubsection{Mukai lattice and Mukai pairing}
We fix 
\[
\Lambda=\Halg(X, \bZ)\coloneqq H^0(X, \bZ) \oplus \NS(X) \oplus H^4(X, \bZ), 
\]
and the homomorphism 
\[
\cl=v \colon K(X) \to \Halg(X, \bZ), \quad [E] \mapsto \ch(E).\sqrt{\td(X)}. 
\]
For $v_1=(r_1, D_1, s_1), v_2=(r_2, D_2, s_2) \in \Halg(X, \bZ)$,
we define the \textit{Mukai pairing} of $v$ and $w$ as
\[
\langle v, w \rangle\coloneqq
D_1D_2-s_1r_2-s_2r_1
\]
The Riemann--Roch formula
\[ \langle v(E), v(F) \rangle=-\chi(E,F) \]
holds for any objects $E,F \in D^b(X)$.
The lattice $\Halg(X,\bZ)$ defined by the Mukai pairing is called the \textit{Mukai lattice} of $X$. The signature of $\Halg(X, \bZ)$ is $(2, \rho(X))$, where $\rho(X)$ is the Picard rank of $X$. 
Let us 
\[
\Delta(X)\coloneqq\left\{\delta \in \Halg(X, \bZ) : \delta^2=-2 \right\}
\]
denote the set of spherical classes on $X$.

Let $\mcP(X) \subset \Halg(X, \bZ) \otimes \bC$ be an open subset consisting of vectors whose real and imaginary parts span a positive definite plane in $\Halg(X, \bZ)_\bR$. 
Let $\mcP^+(X)$ denote the connected component of $\mcP(X)$ containing the vectors of the form $\exp(B+\sqrt{-1}\omega)$, where $B,\omega \in \NS(X)_{\bR}$ with $\omega$ ample. 
Finally, we put 
\[
\mcP^+_0(X) \coloneqq \mcP^+(X) \setminus 
\bigcup_{\delta \in \Delta(X)} \delta^{\perp}, 
\] 
where 
\[
\delta^\perp \coloneqq \left\{\Omega \in \Halg(X, \bZ) \otimes \bC : \langle\Omega, \delta \rangle=0\right\} 
\]
for a spherical class $\delta \in \Delta(X)$. 

\subsubsection{The distinguished connected component}
Let $\Stab(X)$ denote the space of stability conditions on $D^b(X)$ with respect to  $(\Halg(X, \bZ), v)$. 
For a stability condition $\sigma=(Z, \mcP)$, we define $\pi(\sigma) \in \Halg(X, \bZ) \otimes \bC$ by the formula 
\[
Z(-)=\langle\pi(\sigma), - \rangle
\]
This defines a map $\pi \colon \Stab(X) \to \Halg(X, \bZ) \otimes \bC$. 
\begin{thm}[\cite{bri08}]
There is the distinguished connected component $\Stab^*(X) \subset \Stab(X)$ which is mapped by $\pi$ onto $\mcP^+_0(X)$. Moreover, the map $\pi \colon \Stab^*(X) \to \mcP^+_0(X)$ is a covering map. 
\end{thm}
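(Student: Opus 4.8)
The plan is to reproduce Bridgeland's original argument: produce an explicit open family of stability conditions whose periods fill out an open piece of $\mcP^+_0(X)$, let $\Stab^*(X)$ be the connected component they generate, and then promote the local homeomorphism $\pi$ — available from the deformation theorem quoted above — to a covering map by a properness argument controlled by the support property. \textbf{Construction of geometric stability conditions.} For each pair $\beta,\omega \in \NS(X)_\bR$ with $\omega$ ample and $\omega^2 \gg 0$, I would construct a stability condition $\sigma_{\beta,\omega}=(Z_{\beta,\omega},\mcA_{\beta,\omega})$ with central charge $Z_{\beta,\omega}(-)=\langle \exp(\beta+\sqrt{-1}\omega),v(-)\rangle$ and heart $\mcA_{\beta,\omega}$ obtained by tilting $\Coh(X)$ at the torsion pair cut out by $\mu_\omega$-slope stability with threshold $\beta\cdot\omega$. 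Via the heart description of pre-stability conditions (Remark \ref{rem:slicing and heart}), the work is to verify conditions (a) and (b) together with the support property; positivity of $Z_{\beta,\omega}$ on $\mcA_{\beta,\omega}\setminus\{0\}$ and the support property both rest on the Bogomolov--Gieseker inequality for the Mukai lattice, and one checks that skyscraper sheaves are $\sigma_{\beta,\omega}$-stable so the heart is correctly calibrated. Acting by $\widetilde{\GL}^+(2,\bR)$ and varying $(\beta,\omega)$ yields an open subset $U(X)\subset\Stab(X)$ with $\pi(U(X))$ open in $\mcP^+_0(X)$.

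\textbf{The distinguished component and confinement.} Define $\Stab^*(X)$ to be the connected component of $\Stab(X)$ containing $U(X)$, and show $\pi(\Stab^*(X))\subseteq \mcP^+_0(X)$. That the image lies in $\mcP^+(X)$ is a positivity statement: the real and imaginary parts of $\pi(\sigma)$ span a positive-definite oriented plane, a condition preserved along the connected component. The essential point is to rule out $\pi(\sigma)\in\delta^\perp$ for a spherical class $\delta\in\Delta(X)$, i.e. $Z(\delta)=0$. This is where sphericity enters: such a class is represented by a rigid object, and if its central charge vanished one could generate, via iterated spherical twists $\ST$, semistable spherical objects with classes of bounded central charge but unbounded norm, contradicting the support property; equivalently, a spherical object becoming semistable with $Z=0$ would be a nonzero semistable object of mass zero, which is impossible. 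Hence $\pi(\sigma)$ avoids every $\delta^\perp$.

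\textbf{Covering map.} Since $\pi$ is a local homeomorphism and $\mcP^+_0(X)$ is connected (it is $\mcP^+(X)$ minus countably many real-codimension-two subsets, hence still path-connected), it remains to establish unique path-lifting; by the standard criterion this upgrades the local homeomorphism to a covering onto its image, and that image is then open, closed and nonempty in the connected $\mcP^+_0(X)$, hence equal to it. The heart of path-lifting is the \emph{properness} lemma: if $\sigma_t$ $(t\in[0,1))$ is a path in $\Stab^*(X)$ with $\pi(\sigma_t)\to\Omega\in\mcP^+_0(X)$, then $\sigma_t$ converges in the metric $d$ to a genuine $\sigma_1\in\Stab^*(X)$ with $\pi(\sigma_1)=\Omega$. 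The central charges $Z_t$ converge because $\pi$ is locally a homeomorphism, and the support property furnishes a uniform constant $C$ bounding $\|\cl(E)\|$ by $C|Z(E)|$ on semistable $E$, which prevents masses from blowing up and phases from accumulating, forcing the slicings $\mcP_t$ to converge. Because $\Omega$ lies on no wall $\delta^\perp$, no spherical object becomes massless in the limit, so $\sigma_1$ is again locally finite and satisfies the support property.

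\textbf{Main obstacle.} The crux is the properness statement of the last step, together with the verification of the support property for the geometric conditions in the first step; both reduce to controlling semistable — especially spherical — objects via lattice-theoretic (Hodge-index and Bogomolov--Gieseker) estimates. The conceptual content is that the only way a stability condition in $\Stab^*(X)$ can degenerate is by a spherical object becoming massless, i.e. by $\pi(\sigma)$ hitting one of the removed hyperplanes $\delta^\perp$; ruling out every other mode of degeneration is exactly what makes $\pi$ a covering onto $\mcP^+_0(X)$ rather than merely a surjective local homeomorphism.
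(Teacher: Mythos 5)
This theorem is quoted from \cite{bri08}; the paper under review gives no proof of it, so the only meaningful comparison is with Bridgeland's original argument. Your sketch reproduces that argument's architecture correctly: construct the geometric stability conditions by tilting, let $\Stab^*(X)$ be the connected component they generate, confine the image of $\pi$ to $\mcP^+_0(X)$, and upgrade the local homeomorphism to a covering by a properness/path-lifting argument. You also correctly identify where the real work lies.

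That said, several of your steps are asserted rather than argued, and they are exactly the decisive ones. First, ``the real and imaginary parts span a positive-definite plane, a condition preserved along the connected component'' is not automatic: $\mcP^+(X)$ is open, not closed, in $\Halg(X,\bZ)\otimes\bC$, so connectedness alone does not prevent the image from leaking out through the boundary of $\mcP(X)$; one needs a quantitative estimate of the type of Lemma 8.1 of \cite{bri08} (quoted in this paper as Lemma \ref{lem:CS}) to exclude this. Second, to rule out $Z(\delta)=0$ for $\delta\in\Delta(X)$ you must produce, for \emph{every} $\sigma$ in the component, a $\sigma$-semistable object of class $\delta$ --- only then does ``a nonzero semistable object cannot have $Z=0$'' apply. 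Your appeal to iterated spherical twists does not accomplish this; the clean route is via non-emptiness of moduli of stable objects for a generic nearby stability condition, exactly as in Lemma \ref{lem:sphexist} of this paper (which, note, uses \cite{bm14} and hence postdates \cite{bri08} --- Bridgeland's own treatment of this point is a substantial part of his paper, not a one-line consequence of the support property). Third, for path lifting the support constant must be uniform along the path as $Z_t$ varies, i.e.\ uniform over compact subsets of $\mcP^+_0(X)$; a constant for each individual $\sigma_t$ is not enough, and establishing this uniformity (Bridgeland's Proposition 8.3) is again precisely where staying away from the hyperplanes $\delta^\perp$ enters. With these points filled in, your outline is Bridgeland's proof.
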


In the next section, we study the restriction of the map (\ref{eq:projmass}) to the distinguished connected component $\Stab^*(X)/\bC$.

\section{Injectivity of $\bP m$}
\subsection{Overview} \label{subsection:Strategy}
Throughout this section, let $X$ be a K3 surface, and let 
$\mcS$ be the set of isomorphism classes of spherical objects in $D^b(X)$. In this section, we prove the following theorem:

\begin{thm}[Theorem \ref{thm:inj-intro}]\label{thm:inj}
The continuous map 
\begin{equation}\label{eq:K3-projmass}
\bP m \colon \Stab^*(X)/\bC \to \bP^{\mcS}_{\geq 0}, \quad 
\sigma \mapsto (m_\sigma(E))_{E \in \mcS}. 
\end{equation} 
is injective.
\end{thm}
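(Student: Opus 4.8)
The plan is to use the mass data to reconstruct the central charge up to the $\bC$-action, and then to invoke Huybrechts' theorem \cite{huy12} to recover the slicing. Suppose $\sigma,\tau\in\Stab^*(X)$ satisfy $\bP m(\sigma)=\bP m(\tau)$, so that $m_\tau(E)=c\cdot m_\sigma(E)$ for all $E\in\mcS$ and some fixed $c>0$. Since the imaginary part of the $\bC$-action rescales all masses by a common positive factor, after replacing $\tau$ by a suitable $\bC$-translate we may assume $m_\sigma(E)=m_\tau(E)$ for every spherical object $E$. It then suffices to prove that $\sigma$ and $\tau$ lie in the same $\bC$-orbit.

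First I would recover the absolute values $|Z_\sigma(\delta)|$ for spherical classes $\delta\in\Delta(X)$ directly from the masses. Note that for each $E\in\mcS$ its Mukai vector $v(E)$ is known, so we may group spherical objects by class. For a spherical class $\delta$ represented by a spherical object, set $\mu_\sigma(\delta):=\inf\{\,m_\sigma(E):E\in\mcS,\ v(E)=\delta\,\}$. By the existence of $\sigma$-semistable spherical objects in the distinguished component \cite{bri08} together with the triangle inequality (\ref{eq:triangle inequality}), this infimum is attained and equals $|Z_\sigma(\delta)|$: any $\sigma$-semistable spherical object of class $\delta$ realises $m_\sigma=|Z_\sigma(\delta)|$, while every object of that class has mass at least $|Z_\sigma(\delta)|$. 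Writing $\pi(\sigma)=u_\sigma+\sqrt{-1}\,w_\sigma$ with $u_\sigma,w_\sigma\in\Halg(X,\bZ)_\bR$ and $Z_\sigma(-)=\langle\pi(\sigma),-\rangle$, the equality $m_\sigma=m_\tau$ thus yields
\[
\langle u_\sigma,\delta\rangle^2+\langle w_\sigma,\delta\rangle^2=\mu_\sigma(\delta)^2=\mu_\tau(\delta)^2=\langle u_\tau,\delta\rangle^2+\langle w_\tau,\delta\rangle^2
\]
for all spherical $\delta$.

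Next I would turn this into an equality of quadratic forms. The two sides are the restrictions to $\Delta(X)$ of the homogeneous quadratic forms on $\Halg(X,\bZ)_\bR$ associated, via the Mukai pairing, to the positive semidefinite self-adjoint operators $u_\sigma\otimes u_\sigma+w_\sigma\otimes w_\sigma$ and $u_\tau\otimes u_\tau+w_\tau\otimes w_\tau$. Since $\rho(X)\ge 1$, the integral $(-2)$-vectors are Zariski-dense in the quadric $\{x:\langle x,x\rangle=-2\}$, and an elementary argument (a homogeneous quadratic vanishing on an irreducible affine quadric must be a scalar multiple of its non-homogeneous defining equation, hence zero) shows that a homogeneous quadratic form is determined by its values on $\Delta(X)$. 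Therefore the two operators coincide, which forces $(u_\tau,w_\tau)=(u_\sigma,w_\sigma)\cdot O$ for some $O\in O(2)$. The orientation-reversing elements of $O(2)$ correspond to complex conjugation of $\pi(\sigma)$, which reverses the orientation of the positive-definite plane and hence sends $\mcP^+(X)$ to the other component of $\mcP(X)$; this is impossible since $\pi(\tau)\in\mcP^+_0(X)$. Thus $O\in SO(2)$, i.e.\ $\pi(\tau)=\exp(\sqrt{-1}\,\theta)\,\pi(\sigma)$ for some $\theta\in\bR$, which is exactly the effect on central charges of the real part of the $\bC$-action. Replacing $\tau$ by this further $\bC$-translate (which preserves masses), we reduce to the case $Z_\sigma=Z_\tau=:Z$.

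Finally, I would identify the semistable spherical objects and conclude. With $Z_\sigma=Z_\tau$, a spherical object $E$ is $\sigma$-semistable if and only if $m_\sigma(E)=|Z(E)|$; since $m_\sigma(E)=m_\tau(E)$ and $|Z_\sigma(E)|=|Z_\tau(E)|$, the $\sigma$-semistable and $\tau$-semistable spherical objects coincide, and likewise for the stable ones. Huybrechts' theorem \cite{huy12}, that a stability condition in the distinguished component is determined by its central charge together with its stable spherical objects, then gives $\sigma=\tau$, completing the proof. I expect the main obstacle to be the reconstruction step: proving $\mu_\sigma(\delta)=|Z_\sigma(\delta)|$ (which requires the existence results for semistable spherical objects and control of the infimum) and establishing the Zariski-density of $(-2)$-classes needed to recover the full quadratic form, together with the orientation argument ruling out complex conjugation. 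The appeal to Huybrechts then packages the passage from central-charge-plus-spherical-objects to the slicing itself.
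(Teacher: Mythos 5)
Your overall skeleton (recover $|Z(\delta)|$ on spherical classes from minimal masses, pin down the central charge up to the $\bC$-action and complex conjugation, exclude conjugation by orientation, match semistable spherical objects, invoke Huybrechts) follows the paper's strategy, but there is a genuine gap at the final step. The version of Huybrechts' theorem the paper uses (Theorem \ref{thm:Huyb}) requires that for every spherical object $E$ and every $\phi\in\bR$ one has $E\in\mcP_1(\phi)$ if and only if $E\in\mcP_2(\phi)$ --- that is, the \emph{phases} of the semistable spherical objects must agree, not merely the underlying sets of semistable objects. Knowing $Z_\sigma=Z_\tau$ and that $E$ is semistable for both only determines $\phi_\sigma(E)-\phi_\tau(E)$ modulo $2\bZ$; a priori a spherical object could lie in $\mcP_\sigma(\phi)$ and in $\mcP_\tau(\phi+2k)$ with $k\neq 0$. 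This is exactly why the paper has a separate Section \ref{sec:shareP}: it first normalizes the representatives so that the phases agree for one base spherical object of class $v_1$ (Lemma \ref{lem:goodlift}(2)), and then propagates phase equality to all semistable spherical objects via Lemma \ref{lem:keyphase}, using Serre duality and the nonvanishing of some $\Ext^l(E,F)$ when $\langle v(E),v(F)\rangle\neq 0$ to trap $\phi(F)+l$ strictly between $\phi(E)$ and $\phi(E)+2$ for both stability conditions. Your proof omits both the base-point normalization and this propagation argument, so the hypotheses of Theorem \ref{thm:Huyb} are not verified and the conclusion $\sigma=\tau$ does not follow as written.

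A secondary remark on the central-charge step: where the paper runs an explicit computation with a hand-picked basis of spherical classes $v_1,\dots,v_{\rho(X)+2}$ together with the auxiliary classes $w_{ij}=v_j+\langle v_i,v_j\rangle v_i$ (Lemma \ref{lem:goodbasis} and Proposition \ref{prop:equalcentral}), you instead assert that $\Delta(X)$ is Zariski-dense in the quadric $\{\langle x,x\rangle=-2\}$ and deduce equality of the two quadratic forms. The density statement is plausible (e.g.\ via Borel density for the arithmetic orthogonal group of the indefinite Mukai lattice), but it is a nontrivial arithmetic input that you flag without proving; the paper's elementary basis computation, which in particular exploits that finitely many well-chosen spherical classes suffice, avoids this entirely. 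The remaining pieces of your argument --- the characterization of semistability of an indecomposable object by $m_\sigma(E)=|Z(E)|$, the attainment of the minimum via nonemptiness of moduli of stable objects, and the exclusion of $Z_1=\overline{Z_2}$ using $\mcP^+_0(X)$ --- match the paper's Lemmas \ref{lem:massstability}, \ref{lem:sphexist} and Proposition \ref{prop:equalcentral}.
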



In the proof of Theorem \ref{thm:inj}, the following result of Huybrechts plays a key role: 

\begin{thm}[{\cite[Theorem 0.2]{huy12}}] \label{thm:Huyb}
    Let
    \[ \sigma_1=(Z_1,\mcP_1), \sigma_2=(Z_2,\mcP_2) \in \Stab^*(X)\]
    be stability conditions on $D^b(X)$ with $Z_1=Z_2$. Then
     \[\sigma_1=\sigma_2\]
     holds if and only if the following condition holds: 
   for any $E \in \mcS$ and $\phi \in \bR$, 
    $E \in \mcP_1(\phi)$ if and only if $E \in \mcP_2(\phi)$.
\end{thm}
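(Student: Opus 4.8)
The forward implication is immediate: if $\sigma_1=\sigma_2$ then $\mcP_1=\mcP_2$ as slicings, so $E \in \mcP_1(\phi) \iff E \in \mcP_2(\phi)$ for \emph{every} object, a fortiori for every spherical $E$. The entire content is the converse, so I assume $Z_1=Z_2=:Z$ and that the two slicings assign the same phase to every spherical object, and aim to deduce $\mcP_1=\mcP_2$. The plan is to leverage that $\pi\colon\Stab^*(X)\to\mcP^+_0(X)$ is a covering map. Since $Z_1=Z_2$ we have $\pi(\sigma_1)=\pi(\sigma_2)=:w$, so $\sigma_1,\sigma_2$ lie in the same (discrete) fibre. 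Choosing a path $\gamma\colon[0,1]\to\Stab^*(X)$ from $\sigma_1$ to $\sigma_2$, its projection $\pi\circ\gamma$ is a loop in $\mcP^+_0(X)$ based at $w$. By unique path lifting, if $[\pi\circ\gamma]$ were trivial in $\pi_1(\mcP^+_0(X),w)$ the lift would be a loop and we would get $\sigma_1=\sigma_2$. Hence I may assume $[\pi\circ\gamma]$ is nontrivial, and the goal becomes: a nontrivial such monodromy must alter the phase of some spherical object, contradicting the hypothesis.

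A first key input is a rigidity statement. For a spherical class $\delta\in\Delta(X)$ with $Z(\delta)\neq 0$, there is at most one $\sigma$-stable object of class $\delta$ and given phase, up to shift: two distinct $\sigma$-stable objects $S\neq S'$ of equal phase with $v(S)=v(S')=\delta$ satisfy $\Hom(S,S')=\Hom(S',S)=0$, so $\ext^2(S,S')=\hom(S',S)=0$ by Serre duality, giving $\chi(S,S')=-\ext^1(S,S')\leq 0$, which contradicts $\chi(S,S')=-\langle\delta,\delta\rangle=2$. Thus the collection of $\sigma$-stable spherical objects together with their phases is a well-defined invariant of $(Z,\mcP)$, and by hypothesis this invariant agrees for $\sigma_1$ and $\sigma_2$.

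Next I would analyse the monodromy around the deleted loci. Since $\mcP^+_0(X)=\mcP^+(X)\setminus\bigcup_{\delta\in\Delta(X)}\delta^\perp$ is obtained from $\mcP^+(X)$ by removing the real-codimension-two walls $\delta^\perp$ (the loci where $Z(\delta)=0$), the group $\pi_1(\mcP^+_0(X),w)$ is generated by $\pi_1(\mcP^+(X))$ together with meridian loops encircling these walls. The rotation generator of $\pi_1(\mcP^+(X))$ lifts to the shift $[2]$, which adds $2$ to the phase of every object; a meridian around $\delta^\perp$ lifts to a monodromy autoequivalence built from the spherical twist $\ST_{S_\delta}$ along the stable spherical object $S_\delta$ of class $\delta$ that goes massless there. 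Both types act trivially on $\Halg(X,\bZ)$, consistently with fixing $Z$, but both act \emph{visibly} on the configuration of stable spherical objects and their phases: $[2]$ shifts all phases by $2$, while $\ST_{S_\delta}$ sends a stable spherical object of class $\eta$ to one of class $\eta+\langle\eta,\delta\rangle\delta$ with reshuffled phase. The strategy is to conclude that any nontrivial composite of these cannot fix the phase of every spherical object, forcing $[\pi\circ\gamma]=1$ and hence $\sigma_1=\sigma_2$.

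The main obstacle is exactly this last \emph{detection} step: one must show that the deck transformation attached to $[\pi\circ\gamma]$ — known a priori only to fix $Z$ and to be a nontrivial word in shifts and (squares of) spherical twists — cannot act as the identity on the phases of all spherical objects unless it is itself trivial. This rests on two points that require genuine work: first, that the deck group of $\pi$ is faithfully realised by autoequivalences acting trivially on cohomology; and second, a faithfulness statement for the action of the spherical-twist subgroup on the set of stable spherical objects with their phases. Establishing the second point \emph{exactly}, rather than merely generically, is where the special geometry of K3 surfaces must enter essentially: the uniqueness of the stable representative per spherical class (the rigidity lemma above), together with the braid relations among spherical twists, should be combined to rule out accidental phase-preserving cancellations.
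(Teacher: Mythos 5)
The paper does not prove this statement: it is quoted verbatim as Huybrechts' result \cite[Theorem 0.2]{huy12}, so the benchmark is Huybrechts' own argument, which is direct — with $Z_1=Z_2$ fixed, he pins down the slicing by analysing phases of stable factors and Hom-vanishing against (dense families of) stable spherical objects, very much in the spirit of Lemma \ref{lem:keyphase} of this paper. No covering-space or monodromy input is used. Your rigidity lemma (uniqueness of the stable spherical object of given class and phase, via $\chi(S,S')=2$ versus $\Hom$-vanishing in degrees $0$ and $2$) is correct and is indeed a standard ingredient, but the overall route you propose is different and, more importantly, incomplete.

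The gap is the one you yourself flag, and it is not a routine loose end. From a nontrivial class $[\pi\circ\gamma]\in\pi_1(\mcP^+_0(X),w)$ you only get, by unique path lifting, that the lift starting at $\sigma_1$ ends at $\sigma_2$; to convert this into an \emph{autoequivalence} acting on objects you need the deck group of $\pi\colon\Stab^*(X)\to\mcP^+_0(X)$ to exist (normality of the covering) and to be realised by cohomologically trivial autoequivalences. That is precisely Bridgeland's conjecture on $\Stab^*(X)$, established only for Picard rank one by Bayer--Bridgeland (\cite{BB} in this paper's bibliography) and open in general — so your strategy presupposes machinery far deeper than the theorem being proved, and currently unavailable in the generality of Theorem \ref{thm:Huyb}. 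There is also a concrete slip in the bookkeeping: a single twist $\ST_{S_\delta}$ acts on $\Halg(X,\bZ)$ by the reflection $\eta\mapsto\eta+\langle\eta,\delta\rangle\delta$ and hence does \emph{not} fix $Z$; the monodromy around $\delta^\perp$ is $\ST_{S_\delta}^2$, which acts \emph{trivially} on $\Halg(X,\bZ)$. Consequently your proposed detection via Mukai vectors of stable spherical objects sees nothing, and the only invariant left is the phase data — whose faithfulness under words in even shifts and squared twists is exactly the unproved ``detection step''. Establishing that faithfulness is essentially equivalent to the hard part of the Bayer--Bridgeland circle of ideas, so the proposal as written cannot be completed by elementary means; Huybrechts' direct comparison of slicings with a common central charge avoids all of this.
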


Our argument is summarized as follows: 
Take elements 
\[ [\sigma_1], [\sigma_2] \in \Stab^*(X)/\bC\]
with $\bP m ([\sigma_1])=\bP m ([\sigma_2])$. 
\begin{enumerate}
    \item We first prove that a spherical object $E$ is $\sigma_1$-semistable if and only if it is $\sigma_2$-semistable. See Section \ref{sec:sharestability}. 
    Note that at this stage, we have not yet proved that 
    $\phi_{\sigma_1}(E)=\phi_{\sigma_2}(E)$. 
    \item Next, we prove that we can choose  representatives $\sigma_i=(Z_i,\mcP_i)$ of $[\sigma_i]$ so that $Z_1=Z_2$ holds. See Section \ref{sec:shareZ}. 
    \item Finally, in Section \ref{sec:shareP}, 
    we prove the equality 
    $\phi_{\sigma_1}(E)=\phi_{\sigma_2}(E)$ 
    for any $\sigma_i$-semistable spherical object $E$. 
    Hence we conclude the injectivity of the map (\ref{eq:K3-projmass}) by Theorem \ref{thm:Huyb}. 
\end{enumerate}

\subsection{Minimality of mass and stability}

In this subsection, we study the relation between the stability of objects and their masses. 

\begin{lem} \label{lem:massstability}
Let $\sigma=(Z, \mcP)$ be a stability condition on $D^b(X)$.
For an indecomposable object $E$ in $D^b(X)$, 
the following conditions are equivalent. 
\begin{itemize}
\item[(1)] The object $E$ is $\sigma$-semistable.
\item[(2)] The equality  $m_\sigma(E)=|Z(E)|$ holds.
\end{itemize}
\end{lem}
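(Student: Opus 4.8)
The plan is to prove the two implications separately, with $(1)\Rightarrow(2)$ being formal and $(2)\Rightarrow(1)$ carrying all the content. For $(1)\Rightarrow(2)$ I would simply observe that if $E$ is $\sigma$-semistable, then its Harder--Narasimhan filtration from Definition \ref{def:slicing}(3) is trivial, so $E$ is its own unique semistable factor and $m_\sigma(E)=|Z(E)|$ follows at once from Definition \ref{def:mass}. This direction uses neither indecomposability nor the K3 hypothesis.

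For $(2)\Rightarrow(1)$ I would start from the equality case of the triangle inequality. Let $A_1,\dots,A_n$ be the HN factors of $E$ with phases $\phi_1>\cdots>\phi_n$. Then
\[
|Z(E)|=\Bigl|\sum_{i=1}^n Z(A_i)\Bigr|\le \sum_{i=1}^n|Z(A_i)|=m_\sigma(E)=|Z(E)|,
\]
so equality holds throughout. Since each $A_i$ is semistable we have $Z(A_i)\in\bR_{>0}\exp(\sqrt{-1}\pi\phi_i)\neq 0$, and equality in the triangle inequality forces all the $Z(A_i)$ to be positive real multiples of one complex number, i.e. $\exp(\sqrt{-1}\pi\phi_i)$ is independent of $i$. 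Hence $\phi_i\equiv\phi_1\pmod 2$, and combined with $\phi_1>\cdots>\phi_n$ this yields $\phi_i-\phi_{i+1}\ge 2$; in particular $\phi_{n-1}>\phi_n+1$.

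The main step is then to show $n=1$. Supposing $n\ge 2$, I would look at the final HN triangle $E_{n-1}\to E\to A_n\xrightarrow{\delta}E_{n-1}[1]$ and prove that the connecting map $\delta\in\Hom(A_n,E_{n-1}[1])=\Ext^1(A_n,E_{n-1})$ vanishes, which splits the triangle. This is where the K3 hypothesis is essential: by Serre duality (using $\omega_X\cong\mcO_X$),
\[
\Ext^1(A_n,E_{n-1})\cong \Ext^1(E_{n-1},A_n)^*=\Hom(E_{n-1},A_n[1])^*.
\]
Now $E_{n-1}$ is an iterated extension of the $A_i$ with $i\le n-1$, so it lies in $\mcP([\phi_{n-1},\phi_1])$, while $A_n[1]\in\mcP(\phi_n+1)$; since $\phi_{n-1}>\phi_n+1$ every phase occurring in $E_{n-1}$ strictly exceeds that of $A_n[1]$, and the orthogonality axiom of the slicing (Definition \ref{def:slicing}(2)) gives $\Hom(E_{n-1},A_n[1])=0$. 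Thus $\delta=0$, the triangle splits, and $E\cong E_{n-1}\oplus A_n$ with both summands nonzero, contradicting indecomposability. Hence $n=1$ and $E$ is $\sigma$-semistable.

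The step I expect to be the main obstacle is precisely the degenerate equality case: equality in the triangle inequality does \emph{not} by itself force $n=1$, since the central charges of distinct HN factors can be aligned whenever their phases differ by even integers. The entire point of the lemma is to exclude this for indecomposable objects, and the only available leverage is the Calabi--Yau structure of the K3 surface. Serre duality converts the a priori uncontrolled ``low-to-high'' group $\Ext^1(A_n,E_{n-1})$ into the dual of the ``high-to-low'' group $\Hom(E_{n-1},A_n[1])$, which the slicing annihilates exactly because the forced phase gap is strictly larger than $1$.
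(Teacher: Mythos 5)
Your proof is correct and follows essentially the same route as the paper: equality in the triangle inequality forces the phase gaps to be positive even integers, Serre duality (the Calabi--Yau property of the K3) identifies $\Ext^1(A_n,E_{n-1})$ with $\Hom(E_{n-1},A_n[1])^*$, which vanishes by the orthogonality axiom of the slicing, so the last HN triangle splits and indecomposability forces $n=1$. The only cosmetic difference is that you phrase the vanishing via $E_{n-1}\in\mcP([\phi_{n-1},\phi_1])$ and $\phi_{n-1}>\phi_n+1$, while the paper cites the evenness of $\phi_{n-1}-\phi_n$ directly; these are the same observation.
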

\begin{proof}
It is enough to prove that the condition (2) implies (1).
Take the Harder--Narasimhan filtration 
\[0=E_0 \xrightarrow{f_1}E_1 \xrightarrow{f_2} E_2 \to \cdots \to E_{n-1} \xrightarrow{f_n} E_n=E \]
of $E$ with the $\sigma$-semistable factor $A_i \in \mcP(\phi_i)$, where $\phi_1> \cdots > \phi_n$.
By the assumption, we have 
\[ |Z(E)|=m_\sigma(E)=\sum_{i=1}^{n}|Z(A_i)|. \]
Therefore, for any $i<j$, we have $\phi_i-\phi_j \in 2\bZ_{>0}$.   
Consider the exact triangle
\begin{equation}\label{split triangle}
     E_{n-1} \xrightarrow{f_n} E \to A_n \to E_{n-1}[1].
     \end{equation}
By Serre duality, we obtain isomorphisms 
\begin{eqnarray*}
\Hom(A_n, E_{n-1}[1]) &\simeq & \Hom(E_{n-1}[1], A_n[2])^*\\
& \simeq & \Hom(E_{n-1}, A_n[1])^*.
\end{eqnarray*}
Since $\phi_{n-1}-\phi_n$ is a positive even number, we have $\Hom(E_{n-1}, A_n[1])=0$.
In particular, the exact triangle (\ref{split triangle}) splits. Since $E$ is indecomposable, we have $E \simeq A_n$.
\end{proof}

We recall the following non-emptiness result:

\begin{lem} \label{lem:sphexist}
Let $v \in \Delta(X)$ be a spherical class. 
For $\sigma \in \Stab^*(X)$, there is a $\sigma$-semistable spherical object $E$ with $v(E)=v$.   
\end{lem}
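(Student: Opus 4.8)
The plan is to fix the spherical class $v \in \Delta(X)$ and to show that the subset
\[
A_v \coloneqq \left\{\sigma \in \Stab^*(X) : \text{there is a $\sigma$-semistable spherical object } E \text{ with } v(E)=v\right\}
\]
coincides with all of $\Stab^*(X)$, exploiting the connectedness of $\Stab^*(X)$ together with the wall-and-chamber structure attached to $v$. First I would record the basic numerical input: since $\sigma \in \Stab^*(X)$ we have $\pi(\sigma) \in \mcP^+_0(X)$, and by definition $\mcP^+_0(X)$ avoids $v^\perp$; hence $Z(v)=\langle \pi(\sigma), v\rangle \neq 0$ for every $\sigma \in \Stab^*(X)$. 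This guarantees that $v$ never sits on the central locus where the moduli problem would be empty for trivial reasons.

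Next I would invoke the wall-and-chamber decomposition of $\Stab^*(X)$ associated to the primitive class $v$: the walls form a locally finite union of real-codimension-one submanifolds, their complement (the generic locus) is dense, and within each chamber $C$ the set of $\sigma$-stable objects of class $v$ is independent of $\sigma \in C$. The key existence input is the non-emptiness of moduli spaces of Bridgeland-semistable objects for spherical classes: for $v$ with $v^2=-2$ and $\sigma$ generic, the moduli space is a single point, represented by a $\sigma$-stable object $E$. I would then observe that any $\sigma$-stable object $E$ with $\langle v(E),v(E)\rangle=-2$ is automatically spherical: stability forces $\Hom(E,E)=\bC$, Serre duality on the K3 surface gives $\ext^2(E,E)=1$, and the Riemann--Roch formula $\chi(E,E)=-\langle v,v\rangle=2$ then forces $\ext^1(E,E)=0$. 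Thus every generic $\sigma$ lies in $A_v$.

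To reach the walls I would use that instability is an open condition, equivalently that semistability is closed along paths: if $E$ is $\sigma$-stable for all $\sigma$ in a chamber $C$ and $\sigma_0 \in \overline{C}$, then $E$ cannot acquire a destabilizing subobject of strictly larger phase at $\sigma_0$ without already being unstable for nearby $\sigma \in C$, by continuity of the phases; hence $E$ remains $\sigma_0$-semistable, and it is still the same spherical object. Since every point of $\Stab^*(X)$ lies in the closure of some chamber, this propagates existence from the dense generic locus to all of $\Stab^*(X)$, giving $A_v = \Stab^*(X)$ and proving the lemma.

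The genuinely nontrivial ingredient is the non-emptiness of the moduli space for spherical $v$ at generic $\sigma$ in the distinguished component, which rests on the theory of moduli of Bridgeland-stable objects on K3 surfaces: non-emptiness of Gieseker moduli for spherical Mukai vectors (Mukai, Yoshioka), transported to an arbitrary generic $\sigma \in \Stab^*(X)$ by deformation and wall-crossing in the style of Bayer--Macr\`i. I expect the careful formulation of the wall-and-chamber structure, and in particular the constancy of the stable objects inside a chamber, to be the main point one must cite precisely; once these are in place, the connectedness-and-closedness argument above is routine.
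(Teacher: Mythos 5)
Your proposal is correct and follows essentially the same route as the paper: perturb $\sigma$ to a nearby $v$-generic stability condition, invoke the Bayer--Macr\`i non-emptiness result (\cite[Corollary 6.9]{bm14}) to produce a stable object of class $v$ there, and conclude that this object remains semistable at $\sigma$ by closedness of semistability under specialization. The extra details you supply (that $Z(v)\neq 0$ on $\Stab^*(X)$, and that a stable object with $v^2=-2$ is automatically spherical via Serre duality and Riemann--Roch) are correct and are implicit in the paper's terser argument.
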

\begin{proof}
Take a $v$-generic stability condition $\sigma_+$ near $\sigma$. By {\cite[Corollary 6.9]{bm14}}, which ensures 
the non-emptiness of the moduli space of $\sigma_+$-stable objects, there is a $\sigma_+$-stable object $E$ with $v(E)=v$. Then $E$ is a $\sigma$-semistable spherical object with $v(E)=v$.     
\end{proof}

\begin{cor} \label{cor:massmin}
    Given a stability condition $\sigma=(Z, \mcP) \in \Stab^*(X)$ 
    and a spherical class $v \in \Delta(X)$, 
    take a $\sigma$-semistable object $E$ with $v(E)=v$. 
    Then the following equations hold: 
    \[
    m_\sigma(E)=|Z(v)|=\min_{F \in \mcS_v} m_\sigma(F), 
    \]
    where $\mcS_v$ is the set of isomorphism classes of spherical objects with $v(F)=v$. 
\end{cor}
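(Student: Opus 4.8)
The plan is to verify the two asserted equalities separately: the first follows immediately from the semistability of $E$, and the second combines the triangle inequality with the non-emptiness result of Lemma \ref{lem:sphexist}.

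First I would observe that for the given $\sigma$-semistable object $E$ with $v(E)=v$, the Harder--Narasimhan filtration of $E$ consists of the single semistable factor $E$ itself. Hence by Definition \ref{def:mass} the mass is
\[
m_\sigma(E)=|Z(E)|=|Z(v)|,
\]
which establishes the first equality. (Equivalently, this is the easy implication $(1)\Rightarrow(2)$ in Lemma \ref{lem:massstability}.)

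Next I would show that $|Z(v)|$ is a lower bound for the masses $m_\sigma(F)$ as $F$ ranges over $\mcS_v$. For any $F \in \mcS_v$ we have $v(F)=v$, so $Z(F)=Z(v)$, and the triangle inequality (\ref{eq:triangle inequality}) gives $m_\sigma(F)\geq |Z(F)|=|Z(v)|$; thus $\min_{F \in \mcS_v} m_\sigma(F)\geq |Z(v)|$. To see that this bound is attained, I would invoke Lemma \ref{lem:sphexist}, which provides a $\sigma$-semistable \emph{spherical} object with Mukai vector $v$: such an object lies in $\mcS_v$ and has mass $|Z(v)|$ by the previous step. Therefore the infimum is realized and equals $|Z(v)|$, completing the second equality.

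Since each step is a one-line deduction from a fact already in hand, I do not anticipate any genuine obstacle. The only point deserving care is that the object realizing the minimum be genuinely spherical, i.e. actually lie in $\mcS_v$ rather than merely being a semistable object of class $v$; this is precisely what Lemma \ref{lem:sphexist} guarantees, so it is the one place where the K3-specific input (non-emptiness of moduli of stable spherical objects, via \cite{bm14}) enters.
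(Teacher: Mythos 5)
Your proof is correct and follows essentially the same route as the paper: the first equality from the triviality of the Harder--Narasimhan filtration of a semistable object, and the lower bound from the triangle inequality (\ref{eq:triangle inequality}). The paper's own proof is terser and leaves the attainment of the minimum implicit (relying on Lemma \ref{lem:sphexist} having just been stated), whereas you spell it out explicitly --- a point worth making, since the object $E$ in the statement is not itself required to be spherical.
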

\begin{proof}
 By definition, we have $m_\sigma(E)=|Z(v)|$. 
Moreover, by the inequality (\ref{eq:triangle inequality}), we have 
    $|Z(v)| \leq m_\sigma(F)$ for any spherical object $F$ with $v(F)=v$. 
   Hence, the assertion holds. 
\end{proof}

\begin{cor} \label{cor:abscentral}
    Take elements $\sigma_1=(Z_1,\mcP_1), \sigma_2=(Z_2,\mcP_2) \in \Stab^*(X)$. Suppose that 
    \[ m (\sigma_1)=m (\sigma_2)\]
    holds. 
    Then we have 
    \[|Z_1(v)|=|Z_2(v)|\]
    for any spherical class $v \in \Delta(X)$. 
\end{cor}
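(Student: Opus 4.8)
The plan is to reduce everything to the minimality characterization of $|Z_i(v)|$ established in Corollary~\ref{cor:massmin}. Fix a spherical class $v \in \Delta(X)$ and let $\mcS_v \subset \mcS$ denote the set of isomorphism classes of spherical objects $F$ with $v(F)=v$. For each $i \in \{1,2\}$, Lemma~\ref{lem:sphexist} guarantees the existence of a $\sigma_i$-semistable spherical object with Mukai vector $v$, so Corollary~\ref{cor:massmin} applies and yields the identity
\[
|Z_i(v)| = \min_{F \in \mcS_v} m_{\sigma_i}(F).
\]
Thus the absolute value of the central charge on $v$ is recovered intrinsically from the mass function restricted to the subset $\mcS_v$.

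The second, final step is to feed in the hypothesis. Since $m(\sigma_1)=m(\sigma_2)$ as elements of $\bR^{\mcS}_{\geq 0}$, we have $m_{\sigma_1}(F)=m_{\sigma_2}(F)$ for every $F \in \mcS$, and in particular for every $F \in \mcS_v$, because $\mcS_v \subset \mcS$. Taking the minimum over $F \in \mcS_v$ on both sides and combining with the displayed identity for $i=1,2$ gives
\[
|Z_1(v)| = \min_{F \in \mcS_v} m_{\sigma_1}(F) = \min_{F \in \mcS_v} m_{\sigma_2}(F) = |Z_2(v)|,
\]
which is exactly the desired conclusion. As $v \in \Delta(X)$ was arbitrary, the equality holds for all spherical classes.

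I do not expect a genuine obstacle here: the entire geometric and analytic content has already been packaged into Corollary~\ref{cor:massmin} (non-emptiness of moduli of semistable spherical objects, the equality $m_\sigma(E)=|Z(v)|$ for a semistable representative, and the triangle inequality making it the minimal mass in its class). The only point requiring the slightest care is to observe that the hypothesis is the equality of the unprojectivized mass maps $m(\sigma_1)=m(\sigma_2)$, so no rescaling ambiguity enters and the term-by-term comparison on $\mcS_v$ is legitimate.
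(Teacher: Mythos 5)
Your proof is correct and follows exactly the paper's own argument: both express $|Z_i(v)|$ as $\min_{F \in \mcS_v} m_{\sigma_i}(F)$ via Corollary \ref{cor:massmin} (with Lemma \ref{lem:sphexist} supplying the required semistable representative) and then compare term by term using the hypothesis $m(\sigma_1)=m(\sigma_2)$. No differences worth noting.
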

\begin{proof}
We have 
\[
|Z_1(v)|=
\min_{F\in \mcS_v}m_{\sigma_1}(F) 
=\min_{F \in \mcS_v} m_{\sigma_2}(F) 
=|Z_2(E)|, 
\]
where the first and the third equality follow from Corollary \ref{cor:massmin}, 
and the second equality follows from the assumption $m(\sigma_1)=m(\sigma_2)$. 
\end{proof}

\subsection{Matching semistable objects}
\label{sec:sharestability}


\begin{prop} \label{prop:sharestability}
    Take elements $\sigma_1=(Z_1,\mcP_1), \sigma_2=(Z_2,\mcP_2) \in \Stab^*(X)$. Suppose that we have
    \[\bP m ([\sigma_1])=\bP m ([\sigma_2]).\]
   Then a spherical object $E$ is $\sigma_1$-semistable if and only if $E$ is $\sigma_2$-semistable.
\end{prop}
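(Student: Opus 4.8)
The plan is to exploit the characterization of semistability via minimality of mass established in the previous subsection, together with the hypothesis that the projective mass vectors agree. First I would note that since $\bP m([\sigma_1]) = \bP m([\sigma_2])$, there is a positive real scalar $\lambda > 0$ such that $m_{\sigma_1}(F) = \lambda \cdot m_{\sigma_2}(F)$ for every spherical object $F \in \mcS$. By rescaling the central charge of $\sigma_2$ (multiplying $Z_2$ by $\lambda$, which does not affect the slicing $\mcP_2$ nor the semistability of any object), I may assume without loss of generality that $m_{\sigma_1}(F) = m_{\sigma_2}(F)$ for all $F \in \mcS$, i.e.\ that $m(\sigma_1) = m(\sigma_2)$ as actual vectors in $\bR^\mcS_{\geq 0}$, not merely projectively.

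Now let $E$ be a spherical object that is $\sigma_1$-semistable; by symmetry it suffices to prove it is $\sigma_2$-semistable. Set $v \coloneqq v(E) \in \Delta(X)$. By Corollary \ref{cor:massmin} applied to $\sigma_1$, the semistability of $E$ gives
\[
m_{\sigma_1}(E) = |Z_1(v)| = \min_{F \in \mcS_v} m_{\sigma_1}(F).
\]
Since $m(\sigma_1) = m(\sigma_2)$, the left-hand side equals $m_{\sigma_2}(E)$ and the minimum over $\mcS_v$ is the same for both stability conditions; hence
\[
m_{\sigma_2}(E) = \min_{F \in \mcS_v} m_{\sigma_2}(F).
\]
By Corollary \ref{cor:abscentral} we also have $|Z_2(v)| = |Z_1(v)| = \min_{F \in \mcS_v} m_{\sigma_2}(F)$. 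Combining these, $m_{\sigma_2}(E) = |Z_2(v)| = |Z_2(E)|$.

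It remains to upgrade the equality $m_{\sigma_2}(E) = |Z_2(E)|$ to the $\sigma_2$-semistability of $E$. This is precisely where Lemma \ref{lem:massstability} enters, but that lemma requires $E$ to be \emph{indecomposable}. A spherical object is indecomposable, since $\Hom(E,E) = \bC$ admits no nontrivial idempotents; I would record this observation explicitly. Granting indecomposability, Lemma \ref{lem:massstability} converts the mass equality $m_{\sigma_2}(E) = |Z_2(E)|$ directly into $\sigma_2$-semistability of $E$, completing the argument. The main obstacle I anticipate is the very first normalization step: one must be careful that the single global scalar $\lambda$ relating the two mass vectors is genuinely independent of $F$, and that rescaling $Z_2$ by a positive real does not move $\sigma_2$ out of the distinguished component $\Stab^*(X)$ — but this is harmless, since multiplying the central charge by a positive real is the action of $\lambda' \in \bR_{>0} \subset \bC$ from the $\bC$-action, under which $\Stab^*(X)$ is preserved and the slicing (hence all semistability) is unchanged.
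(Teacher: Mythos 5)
Your proof is correct and follows essentially the same route as the paper's: normalize the representatives so that the masses agree on the nose, use Corollary \ref{cor:massmin} to identify $m_{\sigma_2}(E)$ with $|Z_2(E)|$, and invoke Lemma \ref{lem:massstability}; your explicit remark that a spherical object is indecomposable (since $\End(E)=\bC$ has no nontrivial idempotents) usefully fills in a step the paper leaves implicit. The only quibble is that, with the paper's convention $\sigma\cdot\lambda=(\exp(-\sqrt{-1}\pi\lambda)Z,\ldots)$, the element of $\bC$ whose action rescales the central charge by a positive real is purely imaginary rather than lying in $\bR_{>0}$, but this bookkeeping slip does not affect the argument.
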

\begin{proof}
    By the assumption, there exists a constant $c >0$ such that 
    \[ m_{\sigma_1}(E)=cm_{\sigma_2}(E)\]
for any spherical object $E$. 
    By replacing a representative of $[\sigma_2]$ if necessary, we may assume that $c=1$. 
    Take a $\sigma_1$-semistable spherical object $E$ with $v\coloneqq v(E)$. 
    Then we have 
    \begin{align*}
        m_{\sigma_2}(E)
        &=m_{\sigma_1}(E) \\
        &=\min_{F \in \mcS_v} m_{\sigma_1}(F) \\
        &=\min_{F \in \mcS_v} m_{\sigma_2}(F) \\
        &=|Z_2(E)|. 
    \end{align*}
    Indeed, the first and the third equality hold by our choice of the representatives $\sigma_1$ and $\sigma_2$, 
    and the second and the last equality hold by Corollary \ref{cor:massmin}.  
    By Lemma \ref{lem:massstability}, we conclude that $E$ is $\sigma_2$-semistable. 
\end{proof}

\subsection{Matching central charges}
\label{sec:shareZ}

We begin with the following easy lemma: 

\begin{lem} \label{lem:goodbasis}
    There exists a $\bR$-basis $\{v_1, \cdots, v_{\rho(X)+2}\}$ of $\widetilde{H}^{1, 1}(X, \bZ)_\bR$ satisfying the following conditions\textup{:}
    \begin{enumerate}
        \item  $v_i \in \Delta(X)$ holds for  $1 \leq i \leq \rho(X)+2$, 
        \item $\langle v_i, v_j \rangle \neq 0$ for each $i \neq j$. 
        \end{enumerate}
\end{lem}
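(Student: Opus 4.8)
The plan is to build the entire basis out of line bundle classes. Every line bundle $\mcO_X(D)$ is a spherical object, with explicit Mukai vector
\[
v(\mcO_X(D)) = (1, D, \tfrac{1}{2}D^2 + 1) \in \Delta(X),
\]
which is integral because $\NS(X)$ is an even lattice. First I would record the pairing formula
\[
\langle v(\mcO_X(D)), v(\mcO_X(D')) \rangle = -\tfrac{1}{2}(D - D')^2 - 2,
\]
obtained by expanding the Mukai pairing, so that condition (2) for a pair of such classes reduces to the single numerical constraint $(D - D')^2 \neq -4$.

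The key structural observation is that the classes $\{v(\mcO_X(D)) : D \in \NS(X)\}$ already span $\Halg(X, \bZ)_\bR$ over $\bR$, despite all lying in the affine hyperplane $\{r = 1\}$. Indeed, a linear functional annihilating all of them would, expanded as a polynomial in $D$, have a quadratic part proportional to the (nonzero) intersection form on $\NS(X)_\bR$, forcing its $H^4$-coefficient to vanish, and then its linear and constant parts to vanish as well; hence the annihilator is trivial. Equivalently, $D \mapsto v(\mcO_X(D))$ affinely spans the hyperplane $\{r = 1\}$, whose linear span is all of $\Halg(X, \bZ)_\bR$. It is exactly the quadratic term $\tfrac{1}{2}D^2$ that makes these codimension-one classes span the full space.

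With spanning in hand, I would construct the $D_i$ by induction. Given $D_1, \dots, D_k$ with $v(\mcO_X(D_1)), \dots, v(\mcO_X(D_k))$ linearly independent and pairwise satisfying $(D_i - D_j)^2 \neq -4$, I choose $D_{k+1} \in \NS(X)$ avoiding two bad loci: the subvariety of $\NS(X)_\bR$ where $v(\mcO_X(D_{k+1}))$ falls into the span of the previous classes, and the finitely many quadrics $\{(D_{k+1} - D_i)^2 = -4\}$. The first locus is a \emph{proper} subvariety precisely because the line bundle classes are not contained in that span (by the spanning step, valid as long as $k < \rho(X) + 2$). Since $\NS(X) = \bZ^{\rho(X)}$ is Zariski dense in $\NS(X)_\bR$, it is not contained in this finite union of proper subvarieties, so an integral $D_{k+1}$ exists; the induction runs until $k = \rho(X) + 2$, at which point $v_i \coloneqq v(\mcO_X(D_i))$ is the desired basis.

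I expect the only genuine subtlety to be the simultaneous achievement of linear independence (extracting a full basis of $\rho(X)+2$ vectors from classes confined to a codimension-one affine slice, which succeeds only because of the affine-spanning property established above) together with the open nonzero-pairing condition, all while keeping every $D_i$ integral. This is exactly what the Zariski density of lattice points in $\NS(X)_\bR$ resolves at each inductive step.
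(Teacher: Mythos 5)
Your argument is correct, but it takes a genuinely different route from the paper's. Both proofs use only line bundle classes $v(\mcO_X(D))=(1,D,\tfrac{1}{2}D^2+1)$, and both ultimately rest on the same pairing identity $\langle v(\mcO_X(D)),v(\mcO_X(D'))\rangle=-\tfrac{1}{2}(D-D')^2-2$, which turns condition (2) into $(D-D')^2\neq -4$. The paper, however, simply writes the basis down explicitly: using that $\NS(X)$ has signature $(1,\rho(X)-1)$, it picks mutually orthogonal classes $C,D_1,\dots,D_{\rho(X)-1}\in\NS(X)$ with $C^2>0$ and $D_i^2<-4$, and takes $v(\mcO(D_1)),\dots,v(\mcO(D_{\rho(X)-1}))$ together with $v(\mcO_X)$, $v(\mcO_X(-C))$, $v(\mcO_X(2C))$; orthogonality reduces each $(D-D')^2$ to a sum of self-intersections, and linear independence is read off after subtracting $v(\mcO_X)$. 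You replace this explicit choice by two soft ingredients: the observation that the ``paraboloid'' $\{v(\mcO_X(D))\}_{D\in\NS(X)}$ linearly spans all of $\Halg(X,\bZ)_\bR$ precisely because of the quadratic term $\tfrac{1}{2}D^2$ (your annihilator argument is correct), and a general-position induction using Zariski density of the lattice $\NS(X)$ in $\NS(X)_\bR$ to avoid finitely many proper closed loci --- the degenerate-span locus, proper thanks to the spanning step, and the quadrics $(D-D_i)^2=-4$, proper since the intersection form is nonzero. What the paper's route buys is a completely explicit basis and an essentially mechanical verification; what yours buys is robustness and transparency --- it isolates the real reason the lemma holds and would survive replacing $-4$ by any other constant or adding further open conditions --- at the cost of a longer setup. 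Both are complete proofs.
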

\begin{proof}
Recall that the signature of $\NS(X)$ is $(1,\rho(X)-1)$.
Then we can take  $C, D_1, \cdots, D_{\rho(X)-1} \in \NS(X)$
such that the following conditions hold:
\begin{itemize}
\item $\{C, D_1, \cdots, D_{\rho(X)-1}\}$ forms a $\bR$-basis of $\NS(X)_\bR$, 
\item $C\cdot D_i=D_i\cdot D_j=0$ for any $i \neq j$, 
\item $C^2 >0$, 
\item $D_i^2 <-4$ for any $1 \leq i \leq \rho(X)+2$.
\end{itemize}

Now, it is straightforward to check that the Mukai vectors 
    \[ v_1 \coloneqq v(\mcO(D_1)),~ \cdots,~ v_{\rho(X)-1} \coloneqq v(\mcO(D_{\rho(X)-1}))\]
    \[ v_{\rho(X)} \coloneqq v(\mcO_X),~ 
    v_{\rho(X)+1}\coloneqq v(\mcO_X(-C)),~     
    v_{\rho(X)+2}\coloneqq v(\mcO_X(2C))\]
satisfy the required properties. 
\end{proof}
Fix a $\bR$-basis $\{v_1, \cdots, v_{\rho(X)+2} \}$ of $\widetilde{H}^{1, 1}(X, \bZ)_\bR$ as in Lemma \ref{lem:goodbasis}.

\begin{lem} \label{lem:goodlift}
Take elements $\xi_1,\xi_2 \in \Stab^*(X)/\bC$ satisfying
    \[ \bP m (\xi_1)=\bP m (\xi_2).\] 
There exist stability conditions $\sigma_1=(Z_1,\mcP_1), \sigma_2=(Z_2,\mcP_2) \in \Stab^*(X)$ satisfying $\xi_1=[\sigma_1],\xi_2=[\sigma_2]$ and the following conditions\textup{:} 
    \begin{enumerate}
        \item $Z_1(v_1)=Z_2(v_1)=1$, 
        \item $\phi_{\sigma_1}(E_1)=\phi_{\sigma_2}(E_1)$ holds for a $\sigma_1$-semistable spherical object $E_1$ with $v(E_1)=v_1$, 
        \item $m_{\sigma_1}(E)=m_{\sigma_2}(E)$ for any spherical object $E \in \mcS$, equivalently, we have $m(\sigma_1)=m(\sigma_2)$. 
    \end{enumerate}
\end{lem}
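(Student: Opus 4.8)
The plan is to exploit the two real degrees of freedom in the $\bC$-action on each orbit, using that $v_1$ is a spherical class to normalize the representatives. First I would record the starting data: since $\bP m(\xi_1)=\bP m(\xi_2)$, for \emph{any} lifts $\sigma_1,\sigma_2$ there is a constant $c>0$ with $m_{\sigma_1}(E)=c\,m_{\sigma_2}(E)$ for all $E\in\mcS$. Because $v_1\in\Delta(X)$ and $\pi(\sigma_i)\in\mcP^+_0(X)$ avoids $v_1^\perp$, we have $Z_i(v_1)=\langle\pi(\sigma_i),v_1\rangle\neq 0$. Writing the central charge of a lift as $\exp(-\sqrt{-1}\pi\lambda)\cdot Z$, its value on $v_1$ has modulus scaled by $e^{\pi\Im\lambda}$ and argument rotated by $-\pi\Re\lambda$; since $Z_i(v_1)\neq 0$, I can solve uniquely for $\Im\lambda_i$ and for $\Re\lambda_i$ modulo $2$ so that $Z_1(v_1)=Z_2(v_1)=1$. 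This yields condition (1).

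Next, condition (3) should come for free. Applying Corollary \ref{cor:massmin} to the class $v_1$ gives $\min_{F\in\mcS_{v_1}}m_{\sigma_i}(F)=|Z_i(v_1)|=1$ for $i=1,2$. Comparing this with the proportionality $m_{\sigma_1}=c\,m_{\sigma_2}$ restricted to $\mcS_{v_1}$ forces $c=1$, hence $m(\sigma_1)=m(\sigma_2)$, which is exactly condition (3) (and recovers the conclusion of Corollary \ref{cor:abscentral} under our normalization).

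Finally, condition (2). By Lemma \ref{lem:sphexist} I choose a $\sigma_1$-semistable spherical object $E_1$ with $v(E_1)=v_1$; by Proposition \ref{prop:sharestability} it is also $\sigma_2$-semistable, so both phases $\phi_{\sigma_i}(E_1)$ are defined. As $Z_i(E_1)=Z_i(v_1)=1\in\bR_{>0}$, the defining property of a pre-stability condition forces $\exp(\sqrt{-1}\pi\phi_{\sigma_i}(E_1))=1$, so each phase is an \emph{even integer}. The only ambiguity remaining after fixing $Z_2(v_1)=1$ is replacing $\lambda_2$ by $\lambda_2+2n$ with $n\in\bZ$, i.e.\ applying the shift $[2n]$: this preserves the central charge and every mass (so conditions (1) and (3) survive) while translating $\phi_{\sigma_2}(E_1)$ by $-2n$. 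Choosing $n$ so that $\phi_{\sigma_2}(E_1)$ lands on the even integer $\phi_{\sigma_1}(E_1)$ achieves condition (2).

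The main obstacle is this last step. One must verify carefully that, after normalizing the central charge at $v_1$, the residual freedom in the lift is exactly an even shift $[2n]$, and that this shift moves the phase of $E_1$—which is constrained to be an even integer precisely because $Z_i(E_1)=1$—by the right even amount to match, all without disturbing the normalizations (1) and (3) already in place. Everything else reduces to the bookkeeping of how the $\bC$-action rescales masses and rotates central charges.
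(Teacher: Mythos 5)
Your proof is correct and follows essentially the same route as the paper: normalize both representatives by the $\bC$-action so that $Z_i(v_1)=1$, deduce $c=1$ (hence condition (3)) from the mass-minimality of semistable objects with class $v_1$, and use Proposition \ref{prop:sharestability} together with the residual freedom to match the phase of $E_1$. In fact you spell out the phase-matching step --- that $Z_i(E_1)=1$ forces both phases to be even integers, so they can be matched by the leftover shift by $\lambda\in 2\bZ$ without disturbing (1) or (3) --- more explicitly than the paper, which simply asserts that (1) and (2) can be arranged by replacing representatives.
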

\begin{proof}
    Take any representatives $\sigma_1$ and $\sigma_2$ of $\xi_1$ and $\xi_2$ respectively. 
    Let $E_1$ be a $\sigma_1$-semistable object with $v(E_1)=v_1$. 
    Then it is also $\sigma_2$-semistable by Proposition \ref{prop:sharestability}. 
    Hence, by replacing the representatives $\sigma_1, \sigma_2$ if necessary, 
    we may assume that the conditions (1) and (2) are satisfied. 
    Note that the $\bC$-action does not change the collection of semistable objects, hence $E_1$ remains semistable with respect $\sigma_1$ and $\sigma_2$. 

    Since we assume $\bP m ([\sigma_1])=\bP m ([\sigma_2])$, 
    there exists a constant $c>0$ such that 
    \[ m_{\sigma_1}(E)=c m_{\sigma_2}(E)\]
    for any spherical object $E \in \mcS$. 
    By the condition (1), we have 
    \[
    m_{\sigma_2}(E_1)=
    |Z_2(v_1)|
    =|Z_1(v_1)|
    =m_{\sigma_1}(E_1)=c m_{\sigma_2}(E_1)
    \]
    Hence we have $c=1$ and the condition (3) also holds. 
\end{proof}


\begin{prop} \label{prop:equalcentral}
Take elements $\sigma_1=(Z_1,\mcP_1), \sigma_2=(Z_2,\mcP_2) \in \Stab^*(X)$ satisfying the conditions (1), (2) and (3) in Lemma \ref{lem:goodlift}. 
    Then we have \[ Z_1=Z_2.\]
\end{prop}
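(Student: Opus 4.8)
The plan is to reconstruct the central charges from the masses of spherical objects. First I would combine hypothesis (3) with Corollary \ref{cor:abscentral}: since $m(\sigma_1)=m(\sigma_2)$, we get $|Z_1(v)|=|Z_2(v)|$ for every spherical class $v\in\Delta(X)$. Writing $Z_j=\langle\pi_j,-\rangle$ with $\pi_j=\pi(\sigma_j)\in\mcP^+_0(X)$, I would introduce the real quadratic forms $q_j(v)\coloneqq|Z_j(v)|^2$ on $\Halg(X,\bZ)_\bR$. The content of the hypothesis is then precisely that $q_1$ and $q_2$ agree on the set $\Delta(X)$ of spherical classes.

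The crucial step is to upgrade this to the equality $q_1=q_2$ of quadratic forms on all of $\Halg(X,\bZ)_\bR$. I would argue that $\Delta(X)$ is Zariski dense in the affine real quadric $\mcQ=\{v:\langle v,v\rangle=-2\}$: since $\Halg(X,\bZ)$ has signature $(2,\rho(X))$ and hence rank at least $3$, the real group $O(2,\rho(X))$ acts transitively on the connected quadric $\mcQ$, and by the Borel density theorem the arithmetic subgroup $O(\Halg(X,\bZ))$ is Zariski dense in it; the orbit of a single spherical class is therefore already dense in $\mcQ$. Consequently the homogeneous degree-two polynomial $q_1-q_2$ vanishes on $\mcQ$, hence on the irreducible complex quadric cut out by $f(v)=\langle v,v\rangle+2$. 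As $f$ is irreducible of degree two, $f$ divides $q_1-q_2$, so $q_1-q_2=c\,f$ for a constant $c$; since $q_1-q_2$ is homogeneous while $f$ has a nonzero constant term, $c=0$ and $q_1=q_2$. This density/divisibility step is where the real work lies and is the main obstacle.

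With $q_1=q_2$ in hand, I would factor over $\bC$. Regarded as an element of $\Sym^2(\Halg(X,\bZ)^{\vee}_{\bC})$, each $q_j=\ell_j\cdot\overline{\ell_j}$ factors as the product of the complex linear form $\ell_j=\langle\pi_j,-\rangle$ and its conjugate $\overline{\ell_j}=\langle\overline{\pi_j},-\rangle$. Because $\pi_j\in\mcP^+(X)$, the real and imaginary parts of $\pi_j$ span a positive-definite plane, so $\pi_j$ and $\overline{\pi_j}$ are $\bC$-linearly independent and the two linear factors are distinct up to scalar. Unique factorization then forces either $\ell_1=c\,\ell_2$ or $\ell_1=c\,\overline{\ell_2}$ for some $c\in\bC$, and the reality of the product $\ell_j\overline{\ell_j}$ gives $|c|=1$. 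Evaluating at $v_1$ and using hypothesis (1), namely $Z_1(v_1)=Z_2(v_1)=1$, pins down $c=1$ in both alternatives; thus either $Z_1=Z_2$ (that is $\pi_1=\pi_2$) or $Z_1=\overline{Z_2}$ (that is $\pi_1=\overline{\pi_2}$).

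Finally I would exclude the conjugate alternative by orientation. Complex conjugation on $\Halg(X,\bZ)_\bC$ reverses the orientation of a positive-definite plane and therefore exchanges the two connected components $\mcP^+(X)$ and $\mcP^-(X)$ of $\mcP(X)$. Hence $\overline{\pi_2}\in\mcP^-(X)$, which is disjoint from $\mcP^+(X)\supset\mcP^+_0(X)\ni\pi_1$; this rules out $\pi_1=\overline{\pi_2}$. We are left with $\pi_1=\pi_2$, i.e. $Z_1=Z_2$, as claimed. (Hypothesis (2) is not needed for this proposition; it will enter in the subsequent matching of phases.)
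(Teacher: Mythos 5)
Your proof is correct, and it reaches the same dichotomy ($Z_1=Z_2$ or $Z_1=\overline{Z_2}$, the latter excluded because conjugation swaps the two components of $\mcP(X)$) that the paper does, but the route through the middle is genuinely different. The paper never invokes density of $\Delta(X)$: it fixes the explicit basis $\{v_1,\dots,v_{\rho(X)+2}\}$ of spherical classes from Lemma \ref{lem:goodbasis}, observes that $w_{ij}=v_j+\langle v_i,v_j\rangle v_i$ is again spherical, and extracts from $|Z_1|=|Z_2|$ on these finitely many classes the coordinate identities $a_i^2+b_i^2=c_i^2+d_i^2$ and $a_ia_j+b_ib_j=c_ic_j+d_id_j$, which together with the normalization at $v_1$ force $(b_i)=\pm(d_i)$. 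In your language, these identities say exactly that the polarization $B$ of $q_1-q_2$ satisfies $B(v_i,v_i)=0$ and (using $\langle v_i,v_j\rangle\neq 0$) $B(v_i,v_j)=0$ on a basis, hence $q_1=q_2$ --- so the step you flag as ``where the real work lies'' (Zariski density of $\Delta(X)$ in the quadric via Borel density and Borel--Harish-Chandra) can be bypassed entirely by the same finite collection of classes you would in any case need for the normalization; the heavy arithmetic input buys generality (it would apply whenever the spherical classes are Zariski dense in their quadric, independent of any hand-picked basis) at the cost of importing lattice theory into an otherwise half-page linear-algebra argument. Your closing remarks are also consistent with the paper: condition (2) of Lemma \ref{lem:goodlift} is indeed not used here, and the exclusion of $Z_1=\overline{Z_2}$ is carried out the same way.
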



\begin{proof}
Take a $\bR$-basis $\{v_1, \cdots, v_{\rho(X)+2}\}$ of $\Halg(X,\bZ)_\bR$ as in Lemma \ref{lem:goodbasis}. 
Let $\{ v_1^*, \cdots, v_{\rho(X)+2}^*\}$ be its dual basis of  $\Halg(X, \bZ)^*_\bR$. 
Then there are real numbers $a_i, b_i, c_i, d_i \in \bR~ (1 \leq i \leq \rho(X)+2)$ such that we have
\begin{align*}
    Z_1=\sum_{i=1}^{\rho(X)+2}(a_i+\sqrt{-1}b_i)v^*_i, \\
    Z_2=\sum_{i=1}^{\rho(X)+2}(c_i+\sqrt{-1}d_i)v^*_i. 
\end{align*}
We need to show that $a_i=c_i$ and $b_i=d_i$ for  
$1 \leq i \leq \rho(X)+2$. 

Recall that $v_i \in \Delta(X)$ holds for $1 \leq i \leq \rho(X)+2$ by the condition (1) in Lemma \ref{lem:goodbasis}. By the condition (3) in Lemma \ref{lem:goodlift} and Corollary \ref{cor:abscentral}, 
for $1 \leq i \leq \rho(X)+2$, we have 
$|Z_1(v_i)|=|Z_2(v_i)|$, which is equivalent to 
\begin{equation} \label{eq:a^2+b^2}
    a_i^2+b_i^2=c_i^2+d_i^2. 
\end{equation}

Next, we consider spherical classes 
\[ w_{ij}= v_j+\langle v_i, v_j \rangle v_i \in \Delta(X)\]
for all $i \neq j$.
Note that $\langle v_i, v_j \rangle \neq 0$
by the condition (2) in Lemma \ref{lem:goodbasis}. 
By Corollary \ref{cor:abscentral}, we also have 
\begin{equation} \label{eq:wij}
    |Z_1(w_{ij})|=|Z_2(w_{ij})|. 
\end{equation}
Combining equations (\ref{eq:a^2+b^2}) and (\ref{eq:wij}), 
we obtain 
\begin{equation} \label{eq:aiaj}
    a_ia_j+b_ib_j=c_ic_j+d_id_j, \quad 
1 \leq i < j \leq \rho(X)+2 
\end{equation}
by the straightforward computation.

Moreover, the condition (1) in Lemma \ref{lem:goodlift}
implies that \[a_1=c_1=1,~b_1=d_1=0.\] 
Putting $i=1$,  we obtain
\[ a_j=c_j,\quad2 \leq j \leq \rho(X)+2\]
by the equations (\ref{eq:aiaj}). 
Then, by the equations (\ref{eq:a^2+b^2}) and (\ref{eq:aiaj}), we have
\[b_ib_j=d_id_j,\quad 1 \leq i,j \leq \rho(X)+2.\]
Hence, we obtain 
\[
(b_1, \cdots, b_{\rho(X)+2})=\pm(d_1, \cdots, d_{\rho(X)+2}), 
\]
which implies $Z_1=Z_2$ or 
$Z_1=\overline{Z}_2$.
However, since we have $Z_1, Z_2 \in \mcP^+_0(X)$, we cannot have $Z_1=\overline{Z}_2$. 
We conclude that $Z_1=Z_2$. 
\end{proof}

\subsection{Matching phases}
\label{sec:shareP}

First, we prove the following key lemma: 

\begin{lem} \label{lem:keyphase}
Take elements $\sigma_1=(Z_1, \mcP_1), \sigma_2=(Z_2,\mcP_2) \in \Stab^*(X)$ satisfying
the conditions (1), (2) and (3) in Lemma \ref{lem:goodlift}.
Let $E$ and $F$ be $\sigma_1$-semistable spherical objects satisfying the following conditions\textup{:} 
    \begin{enumerate}
        \item $\phi_{\sigma_1}(E)=\phi_{\sigma_2}(E)$, 
        \item $\langle v(E), v(F) \rangle \neq 0$, 
        \item $Z_1(F) \notin \bR \cdot Z_1(E)$. 
    \end{enumerate}

    Then we have $\phi_{\sigma_1}(F)=\phi_{\sigma_2}(F)$. 
\end{lem}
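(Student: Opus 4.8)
The plan is to first reduce to the case where $\sigma_1$ and $\sigma_2$ share a common central charge, and then pin down the phase of $F$ by combining the slicing axioms with Serre duality. Since $\sigma_1,\sigma_2$ satisfy the three conditions of Lemma~\ref{lem:goodlift}, Proposition~\ref{prop:equalcentral} gives $Z_1=Z_2=:Z$; moreover the equality $m(\sigma_1)=m(\sigma_2)$ forces $\bP m([\sigma_1])=\bP m([\sigma_2])$, so Proposition~\ref{prop:sharestability} shows that both $E$ and $F$ are semistable with respect to $\sigma_1$ and $\sigma_2$ simultaneously. Because a semistable object $G$ satisfies $Z(G)\in\bR_{>0}\exp(\sqrt{-1}\pi\,\phi_\sigma(G))$ and $Z$ is common to the two stability conditions, the two phases of $F$ can differ only by an even integer; I write $\phi_{\sigma_1}(F)-\phi_{\sigma_2}(F)=2m$ with $m\in\bZ$, and note that $\phi_{\sigma_1}(E)=\phi_{\sigma_2}(E)=:\phi_E$ by hypothesis (1). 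The whole point is to prove $m=0$.

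The main tool will be a two-sided bound on the degrees in which $\Ext^\bullet(E,F)$ can be nonzero. For the semistable pair, axiom (2) of Definition~\ref{def:slicing} gives $\Ext^k(E,F)=\Hom(E,F[k])=0$ whenever $k<\phi_\sigma(E)-\phi_\sigma(F)$; applying the Serre duality isomorphism $\Ext^k(E,F)\cong\Ext^{2-k}(F,E)^*$ on the K3 surface $X$ together with the same vanishing for the pair $(F,E)$ yields $\Ext^k(E,F)=0$ whenever $k>\phi_\sigma(E)-\phi_\sigma(F)+2$. Hence every integer $k$ with $\Ext^k(E,F)\neq 0$ lies in the closed interval $[\phi_\sigma(E)-\phi_\sigma(F),\ \phi_\sigma(E)-\phi_\sigma(F)+2]$. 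On the other hand, hypothesis (2) together with Riemann--Roch gives $\chi(E,F)=-\langle v(E),v(F)\rangle\neq 0$, so the set $S=\{k\in\bZ:\Ext^k(E,F)\neq 0\}$ is nonempty; crucially, $S$ is intrinsic and independent of the stability condition.

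Applying the interval bound to $\sigma_1$ and to $\sigma_2$, and using $\phi_{\sigma_1}(E)=\phi_{\sigma_2}(E)=\phi_E$ to align them, I conclude that $S$ is contained both in $[\phi_E-\phi_{\sigma_1}(F),\ \phi_E-\phi_{\sigma_1}(F)+2]$ and in $[\phi_E-\phi_{\sigma_2}(F),\ \phi_E-\phi_{\sigma_2}(F)+2]$. These two intervals of length $2$ are translates of one another by $2m$. If $|m|\geq 2$ they are disjoint, contradicting $S\neq\emptyset$, so $|m|\leq 1$. The delicate case, and the only place where hypothesis (3) is used, is $|m|=1$: there the two intervals meet in a single point, namely $\phi_E-\phi_{\sigma_2}(F)$ (when $m=1$), which would have to be an integer in order to belong to $S\subset\bZ$; but condition (3), that $Z(F)\notin\bR\cdot Z(E)$, is precisely the statement that $\phi_\sigma(F)-\phi_E\notin\bZ$, which rules this out. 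Therefore $m=0$, i.e.\ $\phi_{\sigma_1}(F)=\phi_{\sigma_2}(F)$, as desired. I expect the endpoint analysis in the $|m|=1$ case to be the only genuinely subtle step; everything else is a direct combination of the slicing axioms, Serre duality on $X$, and Riemann--Roch.
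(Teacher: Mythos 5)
Your proposal is correct and follows essentially the same route as the paper: reduce to $Z_1=Z_2$ via Proposition \ref{prop:equalcentral}, use $\chi(E,F)=-\langle v(E),v(F)\rangle\neq 0$ to produce a nonvanishing $\Ext^l(E,F)$, bound $l$ from both sides via semistability and Serre duality, and invoke condition (3) to exclude the boundary case so that the two phases of $F$ (which agree mod $2\bZ$) must coincide. The only cosmetic difference is that you phrase the endpoint analysis as an intersection of two closed intervals of length $2$, whereas the paper picks a single $l$ and notes that condition (3) makes both inequalities strict; the content is identical.
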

\begin{proof}
    By Proposition \ref{prop:equalcentral}, 
     we have $Z_1=Z_2$. 
    Hence, it is enough to show that 
    $|\phi_{\sigma_1}(F)-\phi_{\sigma_2}(F)| <2$. 

    First, we note that $E$ and $F$ are also $\sigma_2$-semistable by Proposition \ref{prop:sharestability}. 
    By the assumption (2), there exists an integer $l$ satisfying
    \[
    \hom(F[l], E[2])=\hom(E, F[l]) \neq 0, 
    \]
    where the first equality is deduced from Serre duality.
    Since both $E$ and $F$ are $\sigma_i$-semistable, we have 
    the following inequalities for $i=1, 2$: 
    \begin{equation} \label{eq:phaseineq}
        \phi_{\sigma_i}(E) \leq \phi_{\sigma_i}(F)+l \leq 
        \phi_{\sigma_i}(E)+2. 
    \end{equation}
    By the assumption (3), the inequalities in (\ref{eq:phaseineq}) are strict. 
    Finally, the assumption (1) implies 
    $|\phi_{\sigma_1}(F)-\phi_{\sigma_2}(F)| <2$ as required.  
\end{proof}

\begin{prop}\label{prop:sharephase}
Take elements $\sigma_1=(Z_1, \mcP_1), \sigma_2=(Z_2,\mcP_2) \in \Stab^*(X)$ satisfying
the conditions (1), (2) and (3) in Lemma \ref{lem:goodlift}.
Then, for any $E \in \mcS$ and $\phi \in \bR$, $E \in \mcP_{\sigma_1}(\phi)$ if and only if $E \in \mcP_{\sigma_2}(\phi)$. 
\end{prop}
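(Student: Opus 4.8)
The plan is to reduce the statement to an equality of phases, and then to spread that equality from the single object $E_1$ of Lemma \ref{lem:goodlift}(2) to every spherical object by repeatedly invoking the key Lemma \ref{lem:keyphase}.

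First I would make the reduction. Fix a spherical object $E \in \mcS$. Since $\mcP_{\sigma_i}(\phi)$ consists exactly of the $\sigma_i$-semistable objects of phase $\phi$ (together with $0$), Proposition \ref{prop:sharestability} shows that if $E$ fails to be semistable for one $\sigma_i$ then it fails for both, so both sides of the asserted equivalence are vacuous. Hence we may assume $E$ is semistable for both $\sigma_1$ and $\sigma_2$, and it remains to prove $\phi_{\sigma_1}(E)=\phi_{\sigma_2}(E)$. By Proposition \ref{prop:equalcentral} we have $Z_1=Z_2=:Z$, and $Z(E)\neq 0$ because $\pi(\sigma)\notin\delta^\perp$ for every spherical class. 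As $E$ is semistable for both conditions, $Z(E)/|Z(E)|=\exp(\sqrt{-1}\pi\phi_{\sigma_1}(E))=\exp(\sqrt{-1}\pi\phi_{\sigma_2}(E))$, so $\phi_{\sigma_1}(E)$ and $\phi_{\sigma_2}(E)$ agree modulo $2$; it therefore suffices to prove that the integer $\tfrac12(\phi_{\sigma_1}(E)-\phi_{\sigma_2}(E))$ vanishes.

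Next I would organize the propagation. Call two spherical classes $u,u'$ \emph{linked} if $\langle u,u'\rangle\neq 0$ and $Z(u')\notin\bR\cdot Z(u)$; this relation is symmetric, and linkedness of $u'$ to a fixed $u$ amounts to avoiding the integral hyperplane $u^\perp$ together with the real hyperplane $\{x : \Im(Z(x)\overline{Z(u)})=0\}$, the latter being a genuine hyperplane since $Z$ has two-dimensional image. By Lemma \ref{lem:sphexist} and Proposition \ref{prop:sharestability}, each spherical class is realized by an object that is semistable for both $\sigma_1$ and $\sigma_2$, and Lemma \ref{lem:keyphase} transports the property ``$\phi_{\sigma_1}=\phi_{\sigma_2}$'' from a realization of $u$ to a realization of $u'$ whenever $u,u'$ are linked. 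Since the property holds for $E_1$ (the realization of $v_1$ furnished by Lemma \ref{lem:goodlift}(2)), it would follow for a given $E$ with $v(E)=v$ once we exhibit a finite chain $v_1=u_0,u_1,\dots,u_k=v$ of spherical classes in which consecutive terms are linked: realize $u_0=v_1$ by $E_1$, the intermediate $u_j$ by any bi-semistable object, and $u_k=v$ by $E$ itself, then apply Lemma \ref{lem:keyphase} along the chain.

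The remaining point, and the one I expect to be the technical heart, is the existence of such chains. It suffices to find, for each spherical class $v$, a single spherical class $w$ linked to both $v_1$ and $v$ (when $v_1$ and $v$ are already linked no intermediate is needed); then $v_1,w,v$ is the desired chain. Such a $w$ must avoid the two integral hyperplanes $v_1^\perp,\,v^\perp$ and the two real hyperplanes $\{x:\Im(Z(x)\overline{Z(v_1)})=0\}$ and $\{x:\Im(Z(x)\overline{Z(v)})=0\}$. Because the quadric $\{x^2=-2\}$ is irreducible of rank $\rho(X)+2\geq 3$ and indefinite, its integral points $\Delta(X)$ are Zariski dense in it (a standard fact for indefinite integral quadratic forms of rank at least three), so $\Delta(X)$ is not contained in any finite union of proper linear subspaces; hence a spherical class $w$ avoiding these four subspaces exists. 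Granting this, the phase equality holds for every bi-semistable spherical object, and combined with Proposition \ref{prop:sharestability} it yields $E\in\mcP_{\sigma_1}(\phi)\iff E\in\mcP_{\sigma_2}(\phi)$ for all $E\in\mcS$ and $\phi\in\bR$, as required.
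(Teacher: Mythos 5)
Your proposal follows the same skeleton as the paper's proof: reduce via Proposition \ref{prop:sharestability} and Proposition \ref{prop:equalcentral} to showing $\phi_{\sigma_1}(E)=\phi_{\sigma_2}(E)$ for bi-semistable spherical $E$, and then propagate this equality from $E_1$ using Lemma \ref{lem:keyphase} along pairs of spherical classes $u,u'$ with $\langle u,u'\rangle\neq 0$ and $Z(u')\notin\bR\cdot Z(u)$. The reduction and the chaining mechanism are correct (including the observation that at most one intermediate class is ever needed, and that Lemma \ref{lem:sphexist} supplies bi-semistable realizations of the intermediate class). Where you diverge is the one step that carries the real content: producing a spherical class $w$ linked to both $v_1$ and $v$. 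The paper does this completely explicitly: it first settles the phases of all the basis objects $E_1,\dots,E_{\rho(X)+2}$ from Lemma \ref{lem:goodbasis} (using that any two of $Z(v_1),\dots,Z(v_{\rho(X)+2})$ cannot all be proportional since $\im(Z\otimes\bR)$ is two-dimensional and the pairings $\langle v_i,v_j\rangle$ are all nonzero), and then, in the degenerate case, takes $w=v_k+\langle v_j,v_k\rangle v_j$, whose sphericity is a two-line computation and whose linking properties follow from $Z(w)=Z(v_k)+\langle v_j,v_k\rangle Z(v_j)$. You instead invoke Zariski density of $\Delta(X)$ in the quadric $\{x^2=-2\}$. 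That statement is true, but it is not free: it rests on the Zariski density of $O(\Halg(X,\bZ))$ in the real orthogonal group (Borel density applied to the arithmetic lattice in $O(2,\rho)$, or equivalently the largeness of the $(-2)$-reflection group), and you would also need to note that the two ``real'' conditions $\Im\bigl(Z(x)\overline{Z(u)}\bigr)=0$ extend to complex linear functionals so that the density argument over $\bC$ applies to them. So your route is correct but substitutes a nontrivial arithmetic input for the paper's elementary construction; if you keep it, supply a precise reference for the density of $\Delta(X)$, or better, replace that step by the explicit class $v_k+\langle v_j,v_k\rangle v_j$ as in the paper.
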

\begin{proof}
 By Proposition \ref{prop:equalcentral}, 
    we have $Z_1=Z_2$ again. 
By Lemma \ref{lem:sphexist}, we may take a $\sigma_1$-semistable object $E_i$ with $v(E_i)=v_i$ for $1 \leq i \leq \rho(X)+2$.
By Proposition \ref{prop:sharestability}, 
$E_i$ is $\sigma_2$-semistable for each $i$. 
We first prove that 
\[ \phi_{\sigma_1}(E_i)=\phi_{\sigma_2}(E_i)\]
holds for  $1 \leq i \leq \rho(X)+2$. 
By the condition (2) in Lemma \ref{lem:goodlift}, 
we have 
\[\phi_{\sigma_1}(E_1)=\phi_{\sigma_2}(E_1).\] 
Since $\pi(\sigma_1)$ is contained in $\mcP^+(X)$, the dimension of the image of the $\bR$-linear map 
$Z_1 \otimes \bR \colon \Halg(X,\bZ)_\bR \to \bC$ is two. 
Therefore, we have
 \[Z_1(E_{i_0}) \notin \bR \cdot Z_1(E_1), \]
 where $i_0$ is some integer satisfying $2 \leq i_0 \leq \rho(X)+2$.
Moreover, $\langle v_1, v_{i_0} \rangle \neq 0$ holds by 
the condition (2) in Lemma \ref{lem:goodbasis}. 
Hence, we can apply Lemma \ref{lem:keyphase} to obtain 
\[ \phi_{\sigma_1}(E_{i_0})=\phi_{\sigma_2}(E_{i_0}).\] 
Take an arbitrary integer 
$i \in \{1, 2 \cdots, \rho(X)+2 \}$. 
Then we have either 
$Z_1(E_{i}) \notin \bR \cdot Z_1(E_1)$ 
or 
$Z_1(E_{i}) \notin \bR \cdot Z_1(E_{i_0})$. 
In either case, we can apply Lemma \ref{lem:keyphase} as in the previous paragraph and conclude that 
\[ \phi_{\sigma_1}(E_i)=\phi_{\sigma_2}(E_i)\]
as claimed.

We consider a general case. Take a $\sigma_1$-semistable spherical object $E$ with $v(E)=v$. 
If we have 
\[ Z_1(E) \notin \bR \cdot Z_1(E_i)~ \text{and} ~\langle v, v_i \rangle \neq 0\] for some $i \in \{1, 2 \cdots, \rho(X)+2\}$,  
then we immediately get
\[ \phi_{\sigma_1}(E)=\phi_{\sigma_2}(E)\]
by Lemma \ref{lem:keyphase}. 
Suppose that 
\begin{equation}\label{eq:or}
Z_1(E) \in \bR \cdot Z_1(E_i) 
~ \text{or} ~ \langle v, v_i \rangle = 0
\end{equation}
holds for all $1 \leq i \leq \rho(X)+2$. 
Since $\{v_1, \cdots, v_{\rho(X)+2}\}$ is a $\bR$-basis of  $\Halg(X, \bZ)_\bR$, 
we obtain $\langle v, v_j \rangle \neq 0$
for some $j \in \{1, 2, \cdots, \rho(X)+2 \}$.
Then we have 
\begin{equation}\label{eq:Ej}
Z_1(E) \in \bR \cdot Z_1(E_j)
\end{equation}
by the assumption (\ref{eq:or}). 
Since the dimension of the image of $Z_1\otimes \bR$ is two, we have
 \begin{equation}\label{eq:Ek}
      Z_1(E) \notin \bR \cdot Z_1(E_k)
 \end{equation}
for some $k \in \{1,2, \cdots, \rho(X)+2 \} \setminus \{j\}$. 
By the assumption (\ref{eq:or}), we must have $\langle v, v_k \rangle = 0$.
Consider the spherical class 
$w \coloneqq v_k+\langle v_j, v_k \rangle v_j$.
By Lemma \ref{lem:sphexist}, we can take a $\sigma_1$-semistable spherical object $F$ with $v(F)=w$. 
 Recall that $\langle v_j, v_k \rangle \neq 0$ holds by the condition (2) in Lemma \ref{lem:goodbasis}.  
 Since 
 \begin{equation}\label{eq:ZE}
 Z_1(F)=Z_1(E_k)+\langle v_j, v_k \rangle Z_1(E_j)
 \end{equation}
 holds, we obtain 
 $Z_1(F) \notin \bR \cdot Z_1(E_j)$ 
 by (\ref{eq:Ej}) and (\ref{eq:Ek}).
Moreover, we have 
\[ \langle w, v_j \rangle=-\langle  v_j, v_k \rangle \neq 0 \]
by the condition (2) in Lemma \ref{lem:goodbasis}.
Hence, we can apply Lemma \ref{lem:keyphase} to the spherical objects 
$F$ and $E_j$ and obtain $\phi_{\sigma_1}(F)=\phi_{\sigma_2}(F)$. 
By (\ref{eq:Ej}), (\ref{eq:Ek}) and (\ref{eq:ZE}), we have 
\[ Z_1(E) \notin \bR \cdot Z_1(F).\]
Note that $\langle v, w \rangle = \langle v_j, v_k \rangle \langle v, v_j \rangle  \neq 0$ 
holds by the condition (2) in Lemma \ref{lem:goodbasis} and the choice of $j$.
Hence, we can apply Lemma \ref{lem:keyphase} to the spherical objects $F$ and $E$, and get $\phi_{\sigma_1}(E)=\phi_{\sigma_2}(E)$.
\end{proof}

Summarizing the above arguments, we have the proof of Theorem \ref{thm:inj}. 

\begin{proof}[Proof of Theorem \ref{thm:inj}]
Take elements $\xi_1,\xi_2 \in \Stab^*(X)/\bC$ 
  such that
    \[ \bP m (\xi_1)=\bP m (\xi_2)\]
    holds. Take stability conditions $\sigma_1, \sigma_2 \in \Stab^*(X)$ satisfying 
    \[\xi_1=[\sigma_1],~ \xi_2=[\sigma_2]\]
    as in Lemma \ref{lem:goodlift}. 
    By Propositions \ref{prop:equalcentral} and \ref{prop:sharephase}, the assumptions in Theorem \ref{thm:Huyb} are satisfied for $\sigma_1$ and $\sigma_2$. 
    Hence, we get $\sigma_1=\sigma_2$ as required. 
\end{proof}



\section{Certain points on the boundary via spherical objects}
Let $X$ be a K3 surface. 
In this section, we consider certain points on the boundary of the image of the mass map 
$\bP m \colon \Stab^*(X)/\bC \to \bP^{\mcS}_{\geq 0}$ 
as in the previous section. 

\subsection{Hom-functionals}\label{sec:Hom-functional}
Following \cite{bdl20}, we introduce a boundary point induced by the hom-functional. 
\begin{dfn}[{\cite[Definition 4.8]{bdl20}}]
For a spherical object $A \in \mcS$, 
we define the function 
$\overline{\hom}(A, -) \colon \mcS \to \bR_{\ge0}$ as follows: 
\[
\overline{\hom}(A, E) \coloneqq 
\begin{cases}
\sum_{i \in \bZ} \ext^i(A,E) & (E\neq A[i]\text{ for any }i\in\bZ), \\
0 & (\text{otherwise}). 
\end{cases}
\]
The corresponding element in $\bP^{\mcS}_{\geq 0}$ is denoted by
\[
h_A \coloneqq \left[(\overline{\hom}(A, E))_{E \in \mcS}\right]
\in \bP^{\mcS}_{\geq 0}.
\]
\end{dfn}

Recall that the autoequivalence group $\Aut(D^b(X))$ acts on $\Stab(D^b(X))$ by {\cite[Lemma 8.2]{bri}}.

\begin{thm}[{\cite[Theorem 4.9, Corollary 4.13]{bdl20}}]
\label{thm-Hom-functional}
For a spherical object $A \in \mcS$ such that
$A$ is $\sigma$-semistable for some stability condition $\sigma$ on $D^b(X)$, 
we have
\[
\lim_{n\to\pm\infty}\frac{m_\sigma(T_A^n E)}{n}
=m_\sigma(A)\cdot \overline{\hom}(A, E)
\text{ for any spherical object }E\in\mcS. 
\]
In particular, if the spherical twist $T_A$ along $A$ preserves $\Stab^*(X)$, the element $h_A$ lies on the boundary of the closure of $\bP m (\Stab^*(X)/\bC)$.
\end{thm}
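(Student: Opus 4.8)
The plan is to establish the mass-growth formula first, and then read off the boundary statement from it. Throughout, recall that $A$ is spherical and $\sigma$-semistable, so $m_\sigma(A)=|Z(A)|>0$ and $m_\sigma(A[k])=m_\sigma(A)$ for every $k\in\bZ$; I also write $\hom^\bullet(A,-)\coloneqq\sum_i\ext^i(A,-)$, which coincides with $\overline{\hom}(A,E)$ when $E\neq A[i]$ (the excluded case being trivial, since then $T_A^nE=A[i-n]$ has constant mass). The one algebraic input I would use repeatedly is the identity
\[
\RHom(A,T_A^nE)\simeq \RHom(A,E)[-n],
\]
which follows from $T_A^{-n}A\simeq A[n]$ and the fact that $T_A$ is an autoequivalence; in particular $\hom^\bullet(A,T_A^nE)=\hom^\bullet(A,E)$ is independent of $n$.

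For the upper bound I would apply the defining triangle $\RHom(A,T_A^{n-1}E)\otimes A\to T_A^{n-1}E\to T_A^nE$ together with subadditivity of mass along triangles and its additivity on direct sums, giving
\[
m_\sigma(T_A^nE)\le m_\sigma(T_A^{n-1}E)+\hom^\bullet(A,T_A^{n-1}E)\cdot m_\sigma(A)=m_\sigma(T_A^{n-1}E)+\hom^\bullet(A,E)\cdot m_\sigma(A).
\]
Summing telescopically yields $m_\sigma(T_A^nE)\le m_\sigma(E)+n\cdot\hom^\bullet(A,E)\cdot m_\sigma(A)$, and the same computation with the inverse twist treats $n\to-\infty$; hence $\limsup_{|n|\to\infty}m_\sigma(T_A^nE)/|n|\le m_\sigma(A)\cdot\overline{\hom}(A,E)$.

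The matching lower bound is the heart of the matter and is where I expect the main difficulty. Since $T_A$ acts on the Mukai lattice by the reflection in $v(A)$, an involution, the class $v(T_A^nE)$ takes only two values and $|Z(T_A^nE)|$ stays bounded; the linear growth of mass therefore comes entirely from an unboundedly growing number of Jordan--H\"older factors isomorphic to shifts of $A$. As $m_\sigma(X)=\sum_S|Z(S)|$ summed over all JH factors $S$, with each factor isomorphic to some $A[k]$ contributing exactly $m_\sigma(A)$, it suffices to show that $T_A^nE$ contains at least $(n-O(1))\cdot\hom^\bullet(A,E)$ JH factors that are shifts of $A$. I would prove this by induction on $n$, tracking the phase window occupied by the HN factors of $T_A^nE$: by the identity above, the summand fed in at the $n$-th step, $\RHom(A,T_A^{n-1}E)\otimes A=\bigoplus_i\ext^i(A,T_A^{n-1}E)\otimes A[-i]$, consists of $\hom^\bullet(A,E)$ copies of $A$ concentrated near phase $\phi_\sigma(A)-n$, and the rotated cone triangle $T_A^{n-1}E\to T_A^nE\to(\RHom(A,T_A^{n-1}E)\otimes A)[1]$ then deposits these (shifted) copies at the bottom of the slicing, below the phases of the factors accumulated so far. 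Being the lowest-phase contributions, they survive as genuine JH factors and cannot cancel, so the factor count grows by $\hom^\bullet(A,E)$ at each step. The delicate point --- and the real obstacle --- is to make ``below'' precise when $\RHom(A,E)$ is spread over several cohomological degrees: the newly inserted copies then span a phase window of positive width that may overlap the bottom of the previous window, so one must maintain a quantitative inductive estimate on $\phi^{\pm}_\sigma(T_A^nE)$ and rule out cancellation, which is the technical core of the argument.

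Finally, granting the limit formula, the boundary statement is immediate. Assume $T_A$ preserves $\Stab^*(X)$ and set $\sigma_n\coloneqq T_A^n\cdot\sigma\in\Stab^*(X)$. Since mass is equivariant, $m_{\sigma_n}(F)=m_\sigma(T_A^{-n}F)$ for every spherical object $F$, so by the limit formula in the negative direction
\[
\tfrac1n\,m_{\sigma_n}(F)=\tfrac1n\,m_\sigma(T_A^{-n}F)\xrightarrow[n\to\infty]{}m_\sigma(A)\cdot\overline{\hom}(A,F).
\]
Thus the representatives $n^{-1}\big(m_{\sigma_n}(F)\big)_{F\in\mcS}$ converge coordinatewise to the nonzero tuple $m_\sigma(A)\cdot\big(\overline{\hom}(A,F)\big)_{F}$, a representative of $h_A$; by continuity of the quotient map, $\bP m([\sigma_n])\to h_A$, so $h_A$ lies in the closure $M$. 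It is not itself in the image, since every mass functional $\bP m([\tau])$ has all coordinates strictly positive (each nonzero $F$ has a semistable HN factor of nonzero central charge, so $m_\tau(F)>0$), whereas $h_A$ has vanishing $A$-coordinate $\overline{\hom}(A,A)=0$. Therefore $h_A\in\partial M$.
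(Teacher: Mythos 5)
First, note that the paper does not supply its own proof of this statement: it is quoted verbatim from \cite[Theorem 4.9, Corollary 4.13]{bdl20}, so the only ``proof'' in the paper is the citation. Measured against what such a proof must contain, your attempt gets the two easy parts right. The upper bound via the twist triangle $\RHom(A,T_A^{n-1}E)\otimes A\to T_A^{n-1}E\to T_A^{n}E$, subadditivity of mass on triangles, shift-invariance and additivity on direct sums, and the identity $\RHom(A,T_A^{n}E)\simeq\RHom(A,E)[-n]$ is correct and is exactly the standard argument. The deduction of the boundary statement from the limit formula is also correct: equivariance of mass under $T_A^n$ turns the limit into coordinatewise convergence of $n^{-1}m_{\sigma_n}(-)$ to a nonzero representative of $h_A$, and $h_A$ cannot be in the image since every coordinate of a genuine mass functional is strictly positive while $\overline{\hom}(A,A)=0$.

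The genuine gap is the lower bound, and you have named it yourself rather than closed it. Boundedness of $|Z(T_A^nE)|$ (from $T_A$ acting as a reflection on the Mukai lattice) shows only that linear growth, \emph{if it occurs}, must come from a growing number of Jordan--H\"older factors that are shifts of $A$; it does not rule out that the mass stays bounded or grows sublinearly. Your inductive picture --- that the $\hom^\bullet(A,E)$ new copies of $A$ deposited at step $n$ sit ``at the bottom of the slicing'' and therefore survive as JH factors --- is not automatic: $\phi^-_\sigma(T_A^{n}E)$ itself decreases at unit speed, so the phase window of the incoming summand $\bigoplus_i\Ext^i(A,E)\otimes A[2-n-i]$ genuinely overlaps the low-phase HN factors already present, and a priori the connecting map in the rotated triangle could cancel some of these copies (or merge them into non-split extensions whose JH content must still be counted). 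Controlling this --- via a quantitative comparison of the HN filtrations of $T_A^{n-1}E$ and $T_A^{n}E$ and an argument that the number of $A$-type stable factors increases by exactly $\hom^\bullet(A,E)$ up to bounded error --- is precisely the content of \cite[Theorem 4.9]{bdl20}, and it is the step your proposal explicitly defers. As written, the argument establishes $\limsup\le$ but not $\liminf\ge$, so the limit formula, and with it the theorem, is not proved.
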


\begin{ex}
The following objects in $D^b(X)$ satisfy the conditions in Theorem \ref{thm-Hom-functional}: 
\begin{enumerate}
\item 
A spherical vector bundle which is Gieseker-stable with respect to an ample divisor class 
(\cite[Proposition 7.5, Proposition 7.6]{Har}). 

\item 
A spherical torsion sheaf $\mcO_C(k)$ for a $(-2)$-curve $C$ and $k\in\bZ$
(\cite[Proposition 7.7]{Har}). 


\item
Any spherical object when $X$ is of Picard rank one (\cite[Theorem 1.3, Corollary 6.9]{BB}). 

\end{enumerate}
\end{ex}

\subsection{Lax stability conditions on K3 surfaces}\label{sec:Lax stabilty on K3s}

In this subsection, 
we construct examples of points in the boundary of the image $\bP m(\Stab^*(X)/\bC)$ using lax stability conditions on $D^b(X)$. 

\subsubsection{Lax stability conditions on triangulated categories}
We recall the definition of lax stability conditions on a triangulated category $\mcD$ following \cite{bppw}.
We use the same notations as in Section \ref{sec:stab}.

\begin{dfn}[{\cite[Definition 4.2]{bppw}}]
A \textit{lax pre-stability condition} on the triangulated category $\mcD$ is a pair $\sigma=(Z, \mcP)$ of a group homomorphism $Z:\Lambda \to \bC$ and a locally finite slicing $\mcP \in \Slice(\mcD)$ of $\mcD$ satisfying the following condition: for any $\phi \in \bR$ and $0 \neq E \in \mcP(\phi)$, 
we have $Z(E) \in \bR_{\geq 0}\exp(\sqrt{-1}\pi \phi)$. 
\end{dfn}


For a lax pre-stability condition $\sigma$ on $\mcD$, 
we can define the notions of $\sigma$-(semi)stability, phases, $\sigma$-semistable factors, and masses of objects in $D^b(X)$, see Definition \ref{def:mass} and the texts below Definition \ref{def:pre-stability}. 
Note that an object $E$ in a quasi-abelian category $\mcA$ is called \textit{simple} if $E$ does not have any non-trivial \textit{strict} subobjects in $\mcA$. 
 
\begin{rmk}
Let $\sigma=(Z,\mcP)$ be a lax pre-stability condition  on $\mcD$. Take a real number $\phi \in \bR$ and a $\sigma$-semistable object $E \in \mcP(\phi)$.
Recall that we assume the slicing $\mcP$ to be locally finite. In particular,
the quasi-abelian category $\mcP(\phi)$ is of finite length. Therefore, there is a strict filtration
\[ 0=E_0 \subset E_1 \subset \cdots \subset E_n=E \]
in the quasi-abelian category $\mcP(\phi)$ such that the strict quotient $E_i/E_{i-1}$ is $\sigma$-stable of phase $\phi$ for $1 \leq i \leq n$.
Since $\mcP(\phi)$ is not necessarily abelian, we do not know the uniqueness property of $\sigma$-stable factors as in Jordan--H\"older filtrations.
\end{rmk}

\begin{dfn}[{\cite[Definition 4.3]{bppw}}]\label{def:massless}
Let $\sigma$ be a lax pre-stability condition on $\mcD$.
An object $E$ of $\mcD$ is called \textit{massive} with respect to $\sigma$ if we have $m_\sigma(E) \neq 0$, and \textit{massless} with respect to $\sigma$ if we have $m_\sigma(E)=0$. 
\end{dfn}

\begin{dfn}\label{def:lax support}
A lax pre-stability condition $\sigma=(Z, \mcP)$ is called a \textit{lax stability condition} if it satisfies the \textit{support property}, i.e., 
if there exists a constant $C > 0$ such that for any massive $\sigma$-stable object $E$, we have 
\[
\|\cl(E)\|<C|Z(E)|.
\]
\end{dfn}

\begin{rmk}
After writing the first draft of this article, Jon Woolf informed the authors that they now require a stronger support property for lax pre-stability conditions, in order to prove a deformation result. 
The new version is called the \textit{$\epsilon$-lax support property}, see Definition \ref{def:delta-lax support}. 

Although we do not use either versions of support properties in the current article, we will prove them in our setting in Propositions \ref{prop:support} and \ref{prop:delta-lax support property} for the sake of completeness. 
\end{rmk}

Denote the set of lax stability conditions on $\mcD$ by $\Stab^l(\mcD)$. We define 
\[ \Stab^L(\mcD)\coloneqq \Stab^l(\mcD) \cap \overline{\Stab(\mcD)}, \]
where $\overline{\Stab(\mcD)}$ is the closure of $\Stab(\mcD)$ in $\Hom(\Lambda, \bC) \times \Slice(\mcD)$. 
Note that $\Stab^l(\mcD)$ and $\Stab^L(\mcD)$ are also generalized metric spaces.
We have the following inclusions: 
\[
\Stab(\mcD) \subset \Stab^L(\mcD) \subset \Stab^l(\mcD). 
\]

\begin{rmk}\label{rem:lax-0}
For a slicing $\mcP \in \Slice(\mcD)$, the pair $(0, \mcP)$ always defines a lax pre-stability condition on $\mcD$. Since there does not exist massive $\sigma$-stable objects, we have $(0, \mcP) \in \Stab^l(\mcD)$.
\end{rmk}

\begin{rmk}
For a stability condition $\sigma=(Z,\mcP) \in \Stab(\mcD)$, we have
\[ \lim_{t \to +\infty}\sigma \cdot (-t\sqrt{-1})=(0, \mcP) \in \Stab^L(\mcD) \]
by Remark \ref{rem:lax-0}.
\end{rmk}

Let $\Stab^L(\mcD)^\star \subset \Stab^L(\mcD)$ be the subset of lax stability conditions whose central charges are non-zero. 
The action of the group $(\bC,+)$ on $\Hom(\Lambda, \bC)\times \Slice(\mcD)$ preserves $\Stab^l(\mcD)$,  $\Stab^L(\mcD)$ and $\Stab^L(\mcD)^\star$. 
By \cite[Section 13.2]{bppw}, the mass map (\ref{eq:projmass}) extends to a continuous map 
\[
\bP m \colon \Stab^L(\mcD)^\star/\bC \to \bP^\mcS_{\geq 0}.
\]
Since $\Stab(\mcD)/\bC$ is dense in $\Stab^L(\mcD)/\bC$ and $\bP m$ is continuous, we have 
\[
\bP m (\Stab^L(\mcD)^\star/\bC) \subset \overline{\bP m (\Stab(\mcD)/\bC)}. 
\]




\subsubsection{Construction of stability conditions on K3 surfaces}
Let $X$ be a K3 surface. 
In this subsection, we recall the construction of stability conditions on $D^b(X)$ following \cite{bri08}.
The construction is based on Remark \ref{rem:slicing and heart}.

Take $B, \omega \in \NS(X)_\bR$ with $\omega$ ample. 
For a coherent sheaf $E \in \Coh(X)$, we define the \textit{slope} $\mu_\omega(E)$ of $E$ as follows: 
\[
\mu_\omega(E)\coloneqq\begin{cases}
\frac{\omega \cdot \ch_1(E)}{\ch_0(E)} & (\ch_0(E) > 0), \\
+\infty & (\ch_0(E)=0). 
\end{cases}
\]

A coherent sheaf $E$ on $X$ is called \textit{$\mu_\omega$-semistable} (resp. $\mu_\omega$-stable)
if we have 
 \[   \mu_\omega(F) \leq \mu_\omega(G)~(\text{resp.}~ \mu_\omega(F) <\mu_\omega(G))
\]
for any short exact sequence 
\[
0 \to F \to E \to G \to 0.
\]

We define full subcategories $\mcT$ and  $\mcF$ of $\Coh(X)$ as follows: 
\begin{align*}
&\mcT \coloneqq \langle 
E \in \Coh(X) \colon E \text{ is $\mu_\omega$-semistable with } \mu_\omega(E) > B\omega
\rangle, \\
&\mcF \coloneqq \langle
E \in \Coh(X) \colon E \text{ is $\mu_\omega$-semistable with } \mu_\omega(E) \leq B\omega
\rangle,
\end{align*}
where, $\langle-\rangle$ denotes the extension closure. 
As shown in \cite[Lemma 6.1]{bri08}, the pair $(\mcT, \mcF)$ forms a torsion pair on $\Coh(X)$
in the sense of \cite{hrs96}. 
Hence, by the general theory of \cite{hrs96}, 
the category 
\[
\mcA \coloneqq \langle \mcF[1], \mcT \rangle \subset D^b(X)
\]
is the heart of a bounded t-structure of $D^b(X)$. 
We define a group homomorphism $Z_{B, \omega} \colon \Halg(X,\bZ) \to \bC$ as follows: 
\[
Z_{B, \omega}(v) \coloneqq \langle 
\exp(B+\sqrt{-1}\omega), v
\rangle
\]
for $v \in \Halg(X,\bZ)$.
In the sense of Remark \ref{rem:slicing and heart}, we can construct stability conditions on $D^b(X)$ as follows:

\begin{thm}[{\cite[Lemma 6.2 and Proposition 11.2]{bri08}}]
\label{thm:non-existence of certain spherical}
Suppose that we have 
\[ Z_{B, \omega}(E) \notin \bR_{\leq 0}\] for any spherical sheave $E$ on $X$. Then we have  
$(Z_{B, \omega}, \mcA(B,\omega)) \in \Stab^*(X)$. 
\end{thm}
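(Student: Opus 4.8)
The plan is to verify that the pair $(Z_{B,\omega}, \mcA(B,\omega))$ fulfils conditions (a) and (b) of Remark \ref{rem:slicing and heart}, satisfies the support property, and has image $\pi(\sigma)$ inside $\mcP^+_0(X)$. Since the torsion pair $(\mcT,\mcF)$ already makes $\mcA = \langle \mcF[1], \mcT\rangle$ the heart of a bounded t-structure by \cite[Lemma 6.1]{bri08}, the substantive point is the positivity condition (a); the remaining items are then handled by Bridgeland's arguments.

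For positivity, I would first record the short computation, in terms of the twisted Chern character $\ch^B \coloneqq e^{-B}\ch$, that $\Im Z_{B,\omega}(E) = \omega\cdot\ch_1^B(E)$ and $\Re Z_{B,\omega}(E) = \tfrac12\omega^2\,\ch_0(E) - \ch_2^B(E) - \ch_0(E)$. As $Z_{B,\omega}$ is additive on triangles and the region $\bH\cup\bR_{<0}$ is closed under addition, it is enough to test the condition on the $\mu_\omega$-semistable constituents of objects of $\mcT$ and $\mcF[1]$. A $\mu_\omega$-semistable sheaf of slope $> B\omega$ (including any sheaf supported in positive dimension) contributes $\Im Z>0$; the shift of a $\mu_\omega$-semistable sheaf of slope $< B\omega$ also contributes $\Im Z>0$; and a $0$-dimensional torsion sheaf contributes $\Im Z=0$ with $\Re Z<0$. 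All of these are immediate.

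The one delicate case, and the only place where the hypothesis is used, is a $\mu_\omega$-semistable torsion-free sheaf $E$ with $\mu_\omega(E)=B\omega$, so that $\Im Z(E)=0$ and $E[1]\in\mcA$; here one must exclude $Z(E)\in\bR_{\le0}$, which would force $Z(E[1])\in\bR_{\ge0}$ and break (a). I would argue by contradiction: if $Z(E)\in\bR_{\le0}$, then passing to the $\mu_\omega$-stable Jordan--H\"older factors $A$ of $E$ (all of slope $B\omega$, hence with $Z(A)\in\bR$), some factor satisfies $Z(A)\in\bR_{\le0}$. Now $\ch_1^B(A)\perp\omega$ gives $(\ch_1^B(A))^2\le0$ by the Hodge index theorem, while $\mu_\omega$-stability together with Serre duality on the K3 surface gives $v(A)^2 = \ext^1(A,A)-2\ge -2$. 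Feeding $Z(A)\le0$ and $(\ch_1^B(A))^2\le 0$ into the formula $v(A)^2 = (\ch_1^B(A))^2 - 2\ch_0(A)\ch_2^B(A) - 2\ch_0(A)^2$ forces $v(A)^2<0$, whence $v(A)^2=-2$; that is, $A$ is a spherical sheaf with $Z(A)\in\bR_{\le0}$, contradicting the assumption. Therefore $Z(E)\in\bR_{>0}$ and positivity holds. I expect this boundary estimate to be the technical heart of the proof.

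For condition (b), I would invoke the Noetherianity of $\mcA$ together with the discreteness of the values of $\Im Z_{B,\omega}$ to produce Harder--Narasimhan filtrations, as in \cite{bri08}. The support property follows from the Bogomolov-type inequality carried by the Mukai form: the discriminant defines a quadratic form that is nonnegative on the classes of semistable objects and negative definite on $\ker(Z_{B,\omega}\otimes\bR)$. Finally, to locate $\sigma=(Z_{B,\omega},\mcA)$ in the distinguished component, note that $\pi(\sigma)=\exp(B+\sqrt{-1}\omega)$, whose real and imaginary parts span a positive-definite plane because $\omega$ is ample, so $\pi(\sigma)\in\mcP^+(X)$; connecting $\sigma$ to a large-volume limit (where the skyscraper sheaves are stable of a common phase) through stability conditions, and using that $\pi\colon\Stab^*(X)\to\mcP^+_0(X)$ is a covering onto the connected base, places $\sigma$ in $\Stab^*(X)$. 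This is exactly \cite[Proposition 11.2]{bri08}.
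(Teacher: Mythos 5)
This statement is quoted directly from Bridgeland's work and the paper supplies no proof of its own, so the only comparison is with \cite[Lemma 6.2 and Proposition 11.2]{bri08}: your reconstruction is correct and is essentially that argument, with the technical heart exactly where you place it, namely the boundary case $\mu_\omega(E)=B\omega$, where the Hodge index theorem, Serre duality giving $v(A)^2\ge -2$ for a $\mu_\omega$-stable factor $A$, and the estimate $v(A)^2\le -\omega^2\ch_0(A)^2<0$ force a spherical sheaf with $Z(A)\in\bR_{\le 0}$, contradicting the hypothesis. The one small imprecision is that for irrational $B,\omega$ the image of $\Im Z_{B,\omega}$ need not be discrete, so the Harder--Narasimhan property is obtained in \cite{bri08} by deforming from the rational case rather than by the discreteness argument you sketch; since you defer to Bridgeland for that step, this does not affect correctness.
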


\subsubsection{Construction of lax stability conditions on K3 surfaces} \label{sec:constructlax}
Let $X$ be a K3 surface. In this subsection, we construct a lax stability condition on $D^b(X)$ as a degeneration of stability conditions constructed in Theorem \ref{thm:non-existence of certain spherical}.

Fix an ample divisor $H \in \NS(X)$ with $H^2=2d$.
By the abuse of notation, for $\delta=(r, D, s) \in \Halg(X,\bZ)$ with $r \neq 0$, we denote
\[ \mu_H(\delta)\coloneqq \frac{H D}{r} \in \bQ. \]
Fix a rational number $\mu \in \bQ$. We set
\[
\Delta_\mu(X) \coloneqq \left\{\delta\in\Delta(X) : \mu_H(\delta)=\mu\right\},
\]
and
\[
\Delta_\mu^+(X)\coloneqq \left\{(r,D,n)\in\Delta_\mu(X) : r>0\right\}. 
\]
In the following, we always assume that $\Delta_\mu^+(X)$ is non-empty. 
We put 
\[ r_0 \coloneqq \min\{r \in \bZ_{>0} : (r,D,s) \in \Delta^+_\mu(X)  \}.\]

\begin{rmk} \label{rmk:Delta+}
\begin{enumerate}
\item Assume that $\NS(X)=\bZ H$ holds. Then the set $\Delta^+_{\mu}(X)$ is either empty or a singleton. 
Suppose that $\Delta^+_\mu(X)$ is non-empty. 
Take an element $\delta=(r,aH,s) \in \Delta^+_\mu(X)$ and
put $k \coloneqq \gcd(r,a)>0$. Then there exist integers $r_1$ and $a_1$ such that $r=kr_1,~a=ka_1$.
Since $\delta$ is a spherical class, we have
\[ k(2dka^2_1-2r_1s)=-2. \]
Since $2dka^2_1-2r_1s$ is even, we have $k=1$. Therefore, since $\mu_H(\delta)=\mu$ holds, we have $\Delta^+_\mu(X)=\{\delta\}$. 

\item Assume now that $\rho(X)>1$ and consider the case $\mu=0$. In this case, $r_0=1$, as we have $(1, 0, 1) \in \Delta^+_{\mu}(X)$. 
We claim that there are infinitely many spherical classes $\delta \in \Delta_{\mu=0}^+(X)$. 
The set $H^\perp \subset \NS(X)$ of divisors orthogonal to $H$ is a free $\bZ$-module of rank $\rho(X)-1$. In particular, it is an infinite set. 
Since we have $(1, D, (D^2+2)/2) \in \Delta^+_{\mu=0}(X)$ for any $D \in H^\perp$, the claim follows. 
\end{enumerate}
\end{rmk}

We fix a spherical class $\delta_0=(r_0, D_0, s_0) \in \Delta^+_\mu(X)$.
By the non-emptiness result of the moduli space of $H$-semistable sheaves of Mukai vector $\delta_0$, there is a $\mu_H$-semistable vector bundle $A$ on $X$ with $v(A)=\delta_0$. By the minimality of $r_0$, the vector bundle $A$ is $\mu_H$-stable. Therefore, such a vector bundle is unique up to isomorphism.

In this subsection, we choose $B_0$ as 
\[ B_0 \coloneqq  \frac{1}{r_0}D_0 \in \NS(X)_\bQ. \]
By definition, we have 
\begin{equation}\label{eq:slope of delta}
B_0H=\mu_H(A)=\mu.
\end{equation}

As in the previous subsection, we define the heart of a bounded t-structure  
\[\mcA_\mu \coloneqq \langle \mcF_\mu[1], \mcT_\mu \rangle \] 
on $D^b(X)$, where we put
\begin{align*}
&\mcT_\mu \coloneqq \langle 
E \in \Coh(X) \colon E \text{ is $\mu_H$-semistable with } \mu_H(E) > \mu
\rangle, \\
&\mcF_\mu \coloneqq \langle
E \in \Coh(X) \colon E \text{ is $\mu_H$-semistable with } \mu_H(E) \leq \mu
\rangle.
\end{align*}
Let $\alpha>0$ be a positive real number. 
For a coherent sheaf $E$ on $X$, the inequality
$ \mu_{\alpha H}(E)>B_0 \cdot \alpha H $ holds
if and only if   
$\mu_{H}(E)>\mu$ holds. 
Therefore, we have $\mcA_\mu =\mcA(B_0,\alpha H)$ for all $\alpha>0$. 
Then we consider a pair 
\[ \sigma_\alpha=(Z_\alpha, \mcA_\mu)\]
of a group homomorphism  
$Z_\alpha \coloneqq Z_{B_0, \alpha H}:\Halg(X,\bZ) \to \bC$
and the heart $\mcA_\mu$ of a bounded t-structure of $D^b(X)$.

\begin{lem}\label{lem:computation of central charge}
For an element $v=(r,D,s) \in \Halg(X,\bZ)$, we have
\[ Z_\alpha(v)=B_0D-s-\frac{1}{2}rB_0^2+rd \alpha^2+\alpha\sqrt{-1}(D-rB_0)H.   \]
If $r \neq 0$ holds, we have 
\[ Z_{\alpha}(v)=\frac{1}{2r}\left(\delta^2+2\alpha^2 r^2 d-(D-rB_0)^2 \right)+\alpha r \sqrt{-1}(\mu_H(\delta)-\mu). \]
\end{lem}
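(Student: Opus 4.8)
The plan is to unwind the definition $Z_\alpha(v) = Z_{B_0, \alpha H}(v) = \langle \exp(B_0 + \sqrt{-1}\alpha H), v \rangle$ and evaluate the Mukai pairing by an explicit expansion. First I would record the Mukai vector of the exponential class,
\[
\exp(B_0 + \sqrt{-1}\alpha H) = \left(1,\ B_0 + \sqrt{-1}\alpha H,\ \tfrac{1}{2}(B_0 + \sqrt{-1}\alpha H)^2\right) \in \Halg(X, \bZ) \otimes \bC,
\]
and expand the degree-four component using $H^2 = 2d$ together with $B_0, H \in \NS(X)_\bR$:
\[
(B_0 + \sqrt{-1}\alpha H)^2 = B_0^2 - 2d\alpha^2 + 2\sqrt{-1}\,\alpha (B_0 H).
\]
Thus the degree-four entry is $\tfrac{1}{2}B_0^2 - d\alpha^2 + \sqrt{-1}\,\alpha(B_0 H)$.

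Next I would apply the Mukai pairing $\langle (r_1, D_1, s_1), (r_2, D_2, s_2) \rangle = D_1 D_2 - s_1 r_2 - s_2 r_1$ to the two classes $\exp(B_0 + \sqrt{-1}\alpha H)$ and $v = (r, D, s)$. The rank of the exponential class is real, so the imaginary part arises only from the degree-two factor $D_1 = B_0 + \sqrt{-1}\alpha H$ (contributing $\alpha(HD)$) and the degree-four factor (contributing $-\alpha r (B_0 H)$). Separating real and imaginary parts then gives directly
\[
Z_\alpha(v) = B_0 D - s - \tfrac{1}{2} r B_0^2 + rd\alpha^2 + \alpha\sqrt{-1}\,(D - rB_0)H,
\]
which is the first asserted formula.

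For the second formula, assuming $r \neq 0$, I would rewrite both parts in terms of $\delta^2 = \langle v, v \rangle = D^2 - 2rs$. For the imaginary part, using the normalization $B_0 H = \mu$ from \eqref{eq:slope of delta} and $\mu_H(\delta) = DH/r$ yields $(D - rB_0)H = DH - r\mu = r(\mu_H(\delta) - \mu)$, so that part becomes $\alpha r \sqrt{-1}\,(\mu_H(\delta) - \mu)$. For the real part, I would expand $(D - rB_0)^2 = D^2 - 2r(DB_0) + r^2 B_0^2$ and verify the elementary identity
\[
\frac{1}{2r}\left(\delta^2 + 2\alpha^2 r^2 d - (D - rB_0)^2\right) = B_0 D - s - \tfrac{1}{2} r B_0^2 + rd\alpha^2,
\]
matching the real part of the first formula and completing the proof.

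Since the statement is purely a computation, I do not expect any genuine obstacle; the only points requiring care are fixing the convention for the Mukai vector $\exp(B_0 + \sqrt{-1}\alpha H) = (1, B_0 + \sqrt{-1}\alpha H, \tfrac{1}{2}(B_0 + \sqrt{-1}\alpha H)^2)$, keeping the sign conventions of the Mukai pairing consistent, and correctly invoking the normalization $B_0 H = \mu$ when passing to the second expression.
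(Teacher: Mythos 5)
Your computation is correct and is essentially the paper's own proof: the paper simply states that the lemma follows ``by direct computation as in [Bri08, Section 6]'' together with the normalization $B_0H=\mu$ from (\ref{eq:slope of delta}), and your explicit expansion of $\exp(B_0+\sqrt{-1}\alpha H)$ and the Mukai pairing is precisely that omitted computation. The only minor point worth noting is that the $\delta$ appearing in the second displayed formula is the same class as $v=(r,D,s)$ (a notational slip in the statement), which you have interpreted correctly via $\delta^2=D^2-2rs$.
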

\begin{proof}
By (\ref{eq:slope of delta}), 
the claim is deduced from the direct computation as in \cite[Section 6]{bri08}.
\end{proof}

We define a real number $\alpha_0$ as
\[ \alpha_0 \coloneqq \frac{1}{r_0\sqrt{d}}>0. \]

When $\alpha=\alpha_0$, we have the following statement by Lemma \ref{lem:computation of central charge}.

\begin{cor}\label{cor:discrete}
We have
\[ Z_{\alpha_0}(\Halg(X,\bZ)) \subset \frac{1}{r^2_0}\left(\bZ \oplus \frac{1}{\sqrt{d}}\bZ\right). \]
\end{cor}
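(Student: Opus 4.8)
The plan is to substitute the explicit value $\alpha_0^2 = \frac{1}{r_0^2 d}$ directly into the first formula of Lemma \ref{lem:computation of central charge} and to analyze the real and imaginary parts of $Z_{\alpha_0}(v)$ separately. Write $v = (r, D, s) \in \Halg(X, \bZ)$ and recall $B_0 = \frac{1}{r_0} D_0$ with $D_0 \in \NS(X)$; the key bookkeeping observation is that $r_0 B_0 = D_0$ is integral, so every intersection pairing involving $B_0$ contributes exactly one factor of $\frac{1}{r_0}$.

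First I would treat the real part. Substituting $r d \alpha_0^2 = \frac{r}{r_0^2}$ gives
\[
\Re Z_{\alpha_0}(v) = B_0 D - s - \tfrac{1}{2} r B_0^2 + \tfrac{r}{r_0^2}.
\]
I would then check term by term that this lies in $\frac{1}{r_0^2}\bZ$: we have $B_0 D = \frac{1}{r_0}(D_0 D) \in \frac{1}{r_0}\bZ \subset \frac{1}{r_0^2}\bZ$, while $s \in \bZ$ and $\frac{r}{r_0^2} \in \frac{1}{r_0^2}\bZ$ are clear. The only delicate term is $\frac{1}{2} r B_0^2 = \frac{r D_0^2}{2 r_0^2}$, which lies in $\frac{1}{r_0^2}\bZ$ precisely because $D_0^2$ is even; this evenness follows from $\delta_0^2 = -2$, i.e. $D_0^2 = 2(r_0 s_0 - 1)$ (equivalently, from the evenness of the K3 lattice $\NS(X)$). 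For the imaginary part, I compute
\[
\Im Z_{\alpha_0}(v) = \alpha_0 (D - r B_0) H = \frac{r_0 (D H) - r (D_0 H)}{r_0^2 \sqrt{d}},
\]
and since $D H, D_0 H \in \bZ$ the numerator is an integer, so $\Im Z_{\alpha_0}(v) \in \frac{1}{r_0^2 \sqrt{d}}\bZ = \frac{1}{r_0^2} \cdot \frac{1}{\sqrt{d}}\bZ$. Combining the two parts yields the claimed inclusion $Z_{\alpha_0}(\Halg(X,\bZ)) \subset \frac{1}{r_0^2}\bigl(\bZ \oplus \frac{1}{\sqrt{d}}\bZ\bigr)$.

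The computation is entirely routine and there is essentially no genuine obstacle. The one point that requires a moment's care is the half-integer coefficient in $\frac{1}{2} r B_0^2$: without the evenness of $D_0^2$ one would only land in $\frac{1}{2 r_0^2}\bZ$, so the argument crucially uses that $\NS(X)$ is an even lattice, or equivalently that $\delta_0$ is a spherical class.
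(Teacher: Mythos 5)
Your proof is correct and follows essentially the same route as the paper, which deduces the corollary directly from Lemma \ref{lem:computation of central charge} by substituting $\alpha_0=\frac{1}{r_0\sqrt{d}}$ and $B_0=\frac{1}{r_0}D_0$ and checking each term. Your explicit remark that the term $\frac{1}{2}rB_0^2=\frac{rD_0^2}{2r_0^2}$ lands in $\frac{1}{r_0^2}\bZ$ only because $D_0^2$ is even (equivalently, by $\delta_0^2=-2$ or the evenness of $\NS(X)$) is exactly the one point of care in the computation.
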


Following  the arguments of the proof of {\cite[Lemma 6.2]{bri08}}, we have the following remark:

\begin{rmk}\label{rem:estimation of central charge}
Assume that $\alpha \geq \alpha_0$ holds.
Then the following statements hold.
\begin{itemize}
\item[(1)]If $E$ is a torsion sheaf on $X$, we have $Z_\alpha(E) \in \bH \cup \bR_{<0}$. 
\item[(2)]If $E$ is a $\mu_H$-semistable torsion free sheaf on $X$ with $\mu_H(E)>\mu$ (resp. $\mu_H(E)<\mu$), we have 
$Z_\alpha(E) \in \bH$ (resp. $Z_\alpha(E[1]) \in \bH$).
\end{itemize}
Let $E$ be a $\mu_H$-stable torsion free sheaf on $X$
with $\mu_H(E)=\mu$.  Note that $v(E)^2 \geq -2$ and $(D-rB_0)H=0$ hold, where we put $v(E)=(r,D,s)$. By the Hodge index theorem, we have 
$(D-rB_0)^2 \geq 0$.  Then $(D-rB_0)^2=0$ holds if and only if $D=rB_0$ holds. Since $v(E) \in \Delta^+_\mu(X)$, we have $r \geq r_0$. 
By Lemma \ref{lem:computation of central charge}, we obtain
\begin{align*}
Z_\alpha(E)&=\frac{1}{2r}\left(v(E)^2+2\alpha^2 r^2 d-(D-rB_0)^2 \right)\\
&\geq \frac{1}{2r}\left(-2+2\alpha^2 r^2_0d  \right)\geq 0.
\end{align*} 
Therefore, $Z_\alpha(E)=0$ holds if and only if $\alpha=\alpha_0$ and $v(E)=\delta_0$ hold. Note that if $v(E)=\delta_0$ holds, we have $E \simeq A$.
\end{rmk}

By Remark \ref{rem:estimation of central charge}, we have the following proposition:

\begin{prop}\label{prop:a>a_0}
The following statements hold\textup{:}
\begin{itemize}
\item[(1)]For $\alpha > \alpha_0$, we have $\sigma_\alpha \in \Stab^*(X)$. 
\item[(2)] For any non-zero object $E \in \mcA_\mu$, we have $Z_{\alpha_0}(E) \in  \bH \cup \bR_{\leq 0}$. 
\item[(3)] For $E \in \mcA_\mu$, the equality $Z_{\alpha_0}(E)=0$ holds if and only if $E \simeq A[1]^{\oplus N}$ holds for some integer $N \geq 0$.  
\end{itemize}
\end{prop}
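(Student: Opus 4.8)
The plan is to derive all three statements from Remark~\ref{rem:estimation of central charge}, together with the Hodge index theorem and the minimality of $r_0$.

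\emph{Part (1).} Since $\mcA(B_0, \alpha H) = \mcA_\mu$, I would invoke Theorem~\ref{thm:non-existence of certain spherical}, for which it suffices to check that $Z_\alpha(E) \notin \bR_{\leq 0}$ for every spherical sheaf $E$; write $v(E) = (r, D, s)$. If $E$ is torsion then $r = 0$ and $v(E)^2 = D^2 = -2$ forces $D$ to be a nonzero effective class, so $\Im Z_\alpha(E) = \alpha DH > 0$ and $Z_\alpha(E) \in \bH$. If $E$ is torsion-free with $\mu_H(E) \neq \mu$, then $\Im Z_\alpha(E) = \alpha r(\mu_H(E) - \mu) \neq 0$, so $Z_\alpha(E) \notin \bR$. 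In the remaining case $v(E) \in \Delta_\mu^+(X)$, so $r \geq r_0$, while $(D - rB_0)H = 0$ yields $(D - rB_0)^2 \leq 0$ by the Hodge index theorem; the computation of Remark~\ref{rem:estimation of central charge} then gives $\Re Z_\alpha(E) \geq \tfrac{1}{2r}(-2 + 2\alpha^2 r_0^2 d) > 0$ for $\alpha > \alpha_0$.

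\emph{Parts (2) and (3).} Here I would exploit the torsion pair $(\mcT_\mu, \mcF_\mu)$ and, for $E \in \mcA_\mu$, the exact sequence $0 \to H^{-1}(E)[1] \to E \to H^0(E) \to 0$ in $\mcA_\mu$, giving $Z_{\alpha_0}(E) = Z_{\alpha_0}(H^0(E)) - Z_{\alpha_0}(H^{-1}(E))$. By Remark~\ref{rem:estimation of central charge} every generator of $\mcT_\mu$ has $Z_{\alpha_0} \in \bH \cup \bR_{<0}$, and each generator $F$ of $\mcF_\mu$ satisfies $-Z_{\alpha_0}(F) = Z_{\alpha_0}(F[1]) \in \bH \cup \bR_{\leq 0}$; the only delicate subcase is $\mu_H(F) = \mu$, where $\Im Z_{\alpha_0}(F) = 0$ and the Hodge index bound again gives $\Re Z_{\alpha_0}(F) \geq 0$. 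Since any finite sum of such elements again lies in $\bH \cup \bR_{\leq 0}$ (the imaginary parts being $\geq 0$, and if they sum to $0$ then every summand is a non-positive real), filtering into generators shows that any nonzero object of $\mcT_\mu$, and $F[1]$ for any nonzero $F \in \mcF_\mu$, lands in $\bH \cup \bR_{\leq 0}$; summing the two contributions in the displayed sequence gives statement (2).

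For (3), if $Z_{\alpha_0}(E) = 0$, comparing imaginary parts in the sequence above (using $\Im Z_{\alpha_0}(H^0(E)) \geq 0$ and $\Im Z_{\alpha_0}(H^{-1}(E)) \leq 0$) forces both to vanish, which makes $H^0(E)$ zero-dimensional and $H^{-1}(E)$ $\mu_H$-semistable of slope exactly $\mu$; comparing real parts then forces $H^0(E) = 0$ and $Z_{\alpha_0}(H^{-1}(E)) = 0$. Writing $G \coloneqq H^{-1}(E)$ and passing to a Jordan--H\"older filtration of $G$ into $\mu_H$-stable slope-$\mu$ factors $G_i$, each $Z_{\alpha_0}(G_i) \in \bR_{\geq 0}$ sums to zero and hence vanishes, so the stable case of Remark~\ref{rem:estimation of central charge} gives $v(G_i) = \delta_0$ and $G_i \simeq A$. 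Since $A$ is spherical, $\Ext^1(A, A) = 0$, so the filtration splits and $G \simeq A^{\oplus N}$, i.e.\ $E \simeq A[1]^{\oplus N}$; the converse is clear from $Z_{\alpha_0}(A) = 0$. The main obstacle I anticipate is the borderline slope $\mu_H = \mu$, which recurs in all three parts: one must pin down $\Re Z_{\alpha_0}$ on slope-$\mu$ sheaves by simultaneously using $v(E)^2 \geq -2$, the inequality $(D - rB_0)^2 \leq 0$, and $r \geq r_0$, identify its vanishing locus as exactly $v = \delta_0$, and finally upgrade the numerical identity $v(G_i) = \delta_0$ to the genuine splitting $G \simeq A^{\oplus N}$ via the rigidity of $A$.
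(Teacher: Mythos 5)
Your proof is correct and follows essentially the same route as the paper: part (1) is Bridgeland's criterion (Theorem \ref{thm:non-existence of certain spherical}) combined with the computation in Remark \ref{rem:estimation of central charge}, and parts (2)--(3) are the natural d\'evissage through the torsion pair and Jordan--H\"older factors down to the $\mu_H$-stable slope-$\mu$ case, where the Hodge index bound and the minimality of $r_0$ pin down the vanishing locus as $\delta_0$, followed by the splitting via $\Ext^1(A,A)=0$. The only (cosmetic) slip is the case division in part (1): a positive-rank spherical sheaf need not be torsion-free, so the trichotomy should be organized by $r=0$ versus $r>0$; since the estimates you use depend only on $r$ and $\mu_H(E)$, this changes nothing.
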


Note that the first statement (1) is deduced from Theorem \ref{thm:non-existence of certain spherical} and Remark \ref{rem:estimation of central charge}. The second statement (2) follows from Remark \ref{rem:estimation of central charge}. The third statement (3) also follows from Remark \ref{rem:estimation of central charge} and the fact that $A$ is spherical. 
By Remark \ref{rem:slicing and heart}, for $\alpha>\alpha_0$, we obtain the slicing $\mcP_\alpha$ of $D^b(X)$ constructed from $(Z_\alpha, \mcA_\mu)$.
On the other hand, we have the following remark:

\begin{rmk}
By Proposition \ref{prop:a>a_0} (3), $\sigma_{\alpha_0}$ does not give a pre-stability condition on $D^b(X)$. 
We may regard the data $\sigma_{\alpha_0}$ as a degeneration of stability conditions $\{\sigma_\alpha\}_{\alpha>\alpha_0}$. We will make this point of view rigorous by using the theory of lax stability conditions.
\end{rmk}

We also construct the slicing $\mcP_{\alpha_0}$ of $D^b(X)$ from the data $\sigma_{\alpha_0}$.
Putting $\arg(0)\coloneqq \pi$, we consider the map $\arg \colon \bH \cup \bR_{\leq 0} \to (0, \pi]$. 
For simplicity, we write 
\[ \phi_{\alpha_0}(E) \coloneqq \frac{1}{\pi}\arg Z_{\alpha_0}(E) \in (0,1]  \]
for a non-zero object $E \in \mcA_\mu$.
A non-zero object $E$ of $\mcA_\mu$ is $Z_{\alpha_0}$-\textit{semistable} 
if we have 
\begin{equation} \label{eq:def-Zstab}
    \phi_{\alpha_0}(F) \leq \phi_{\alpha_0}(G) 
\end{equation}
for any short exact sequence 
\[
0 \to F \to E \to G \to 0
\]
in $\mcA_\mu$. We say that $E$ is $Z_{\alpha_0}$-\textit{stable} if the inequality (\ref{eq:def-Zstab}) is always strict.  

For a real number $\phi \in (0,1]$, we define
\[ \mcP_{\alpha_0}(\phi) \coloneqq \{ E \in \mcA_\mu : \text{$E$ is $Z_{\alpha_0}$-semistable with $\phi_{\alpha_0}(E)=\phi$}\} \cup \{0\}.  \]
A general case is defined by the property $\mcP_{\alpha_0}(\phi+1)=\mcP_{\alpha_0}(\phi)[1]$ for $\phi \in \bR$.
Then, by the abuse of notation, we also denote $\sigma_{\alpha_0}=(Z_{\alpha_0}, \mcP_{\alpha_0})$.

\begin{rmk}\label{rem:P(1) is abelian}
By Remark \ref{rem:slicing and heart} and Lemma \ref{lem:computation of central charge}, we have 
$\mcP_\alpha(1)=\mcP_{\alpha_0}(1)$ for any $\alpha > \alpha_0$. In particular, $\mcP_{\alpha_0}(1)$ is an abelian category.
\end{rmk}

We also have the weak version of the see-saw property.

\begin{rmk}
For an exact sequence 
\[ 0 \to F \to E \to G \to 0\]
in the abelian category $\mcA_\mu$, the inequality 
\[ \phi_{\alpha_0}(F) \geq \phi_{\alpha_0}(E)   ~\text{(resp. $\phi_{\alpha_0}(F) \leq \phi_{\alpha_0}(E)$)} \]
holds if and only if the inequality
\[ \phi_{\alpha_0}(E) \geq \phi_{\alpha_0}(G)   ~\text{(resp. $\phi_{\alpha_0}(E) \leq \phi_{\alpha_0}(G)$)} \]
holds.
This property is called the \textit{weak see-saw property}.
\end{rmk}

As in the case of the usual slope stability, we can show the following lemma using the weak see-saw property:

\begin{lem}\label{lem:see-saw}
The following statements hold\textup{:} 
\begin{enumerate}
\item Let $E$ and $F$ be $Z_{\alpha_0}$-semistable objects in $\mcA_\mu$ with 
 \[\phi_{\alpha_0}(E) > \phi_{\alpha_0}(F).\]
Then we have $\Hom(E, F)=0$. 
\item If $E$ is a $Z_{\alpha_0}$-stable object, then we have $\Hom(E, E)=\bC$. 
\end{enumerate}
\end{lem}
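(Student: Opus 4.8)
The plan is to treat both statements as the lax (possibly massless) analogues of two classical facts from slope theory: that a nonzero morphism between semistable objects forces a phase inequality, and that stable objects are simple. The ambient category throughout is the abelian heart $\mcA_\mu$, and the main tools are the weak see-saw property, the fact that $Z_{\alpha_0}(\mcA_\mu)\subset\bH\cup\bR_{\leq0}$ so that $\phi_{\alpha_0}$ takes values in $(0,1]$, with $\phi_{\alpha_0}=1$ precisely when $Z_{\alpha_0}\in\bR_{\leq0}$ (in particular on massless objects, where $Z_{\alpha_0}=0$), together with the finite-dimensionality of $\Hom$-spaces in $D^b(X)$.

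For part (1), I would argue by contradiction. Given a nonzero morphism $g\colon E\to F$ in $\mcA_\mu$, factor it through its image $I\coloneqq\im(g)$, so that $I$ is simultaneously a nonzero quotient of $E$ and a nonzero subobject of $F$. Applying the $Z_{\alpha_0}$-semistability of $E$ to the quotient $E\twoheadrightarrow I$ gives $\phi_{\alpha_0}(E)\leq\phi_{\alpha_0}(I)$, and applying the $Z_{\alpha_0}$-semistability of $F$ to the subobject $I\hookrightarrow F$ gives $\phi_{\alpha_0}(I)\leq\phi_{\alpha_0}(F)$; chaining these contradicts the hypothesis $\phi_{\alpha_0}(E)>\phi_{\alpha_0}(F)$. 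Both inequalities are extracted from the semistability condition via the weak see-saw property, and I would check that they persist in the degenerate case $Z_{\alpha_0}(I)=0$: there $\phi_{\alpha_0}(I)=1$ is maximal, which together with the subobject inequality forces $\phi_{\alpha_0}(F)=1$ and hence the same contradiction, since $\phi_{\alpha_0}(E)\leq1$.

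For part (2), I would run the usual Schur-type argument: I claim every nonzero $g\in\End(E)$ is an isomorphism. If $g$ were not invertible, then in $\mcA_\mu$ at least one of $\Ker(g)$ and $\Coker(g)$ is nonzero, i.e. $\im(g)$ is either a proper quotient or a proper subobject of $E$ (or both). As a proper quotient, strict stability forces $\phi_{\alpha_0}(E)<\phi_{\alpha_0}(\im g)$; as a proper subobject it forces $\phi_{\alpha_0}(\im g)<\phi_{\alpha_0}(E)$; when both occur these are incompatible. The two one-sided cases are handled using $\im(g)\cong E/\Ker(g)$: if $g$ is injective but not surjective then $\im(g)\cong E$ gives $\phi_{\alpha_0}(\im g)=\phi_{\alpha_0}(E)$ against the strict subobject inequality, while if $g$ is surjective but not injective then $\im(g)=E$ forces $Z_{\alpha_0}(\Ker g)=0$, putting $\Ker g$ at phase $1$ and again violating strict stability. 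Hence $\End(E)$ is a division algebra, finite-dimensional over the algebraically closed field $\bC$, so $\End(E)=\bC$. I would dispose separately of the case $Z_{\alpha_0}(E)=0$: a short computation shows any nontrivial subobject would force both it and its quotient to be massless, contradicting strict stability, so such an $E$ is simple in $\mcA_\mu$ and Schur's lemma applies directly.

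The main obstacle, and the only genuine departure from the classical proofs, is the presence of massless objects: because $Z_{\alpha_0}$ can vanish, several objects are pushed to the maximal phase $1$, so the see-saw inequalities that are automatically strict in the honest support-property setting can degenerate. The careful point is therefore to verify, according to which of $\Ker(g)$, $\im(g)$, $\Coker(g)$ are massless, that stability still produces a strict contradiction; I expect this bookkeeping, rather than any conceptual difficulty, to be where the proof actually has content.
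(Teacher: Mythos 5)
Your proof is correct and follows exactly the route the paper intends: the paper omits the proof, remarking only that it goes "as in the case of the usual slope stability" via the weak see-saw property, and your argument (factoring through the image for (1), the Schur-type argument for (2), with the phase-$1$ bookkeeping for massless kernels/images/cokernels) is precisely that standard argument carried out carefully in the lax setting. Your closing observation that the only genuine content is checking the degenerate cases where $Z_{\alpha_0}$ vanishes is exactly right.
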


The Harder--Narasimhan property for $\sigma_{\alpha_0}=(Z_{\alpha_0}, \mcA_\mu)$ also follows from a standard argument: 
\begin{lem} \label{lem:HN}
 For any object $E \in \mcA_\mu$, there is a filtration 
    \[ 0=E_0 \subset E_1 \subset \cdots \subset E_n=E \]
    in $\mcA_\mu$ such that for any $1 \leq i \leq n$, the quotient object $A_i\coloneqq E_i/E_{i-1}$ is $Z_{\alpha_0}$-semistable  with 
    \[\phi_{\alpha_0}(A_1)> \cdots > \phi_{\alpha_0}(A_n).\]
    In particular, the collection $\mcP_{\alpha_0}=\{\mcP_{\alpha_0}(\phi)\}_{\phi \in \bR}$ is a slicing of $D^b(X)$.

\end{lem}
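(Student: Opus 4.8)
The plan is to run the classical procedure for constructing Harder--Narasimhan filtrations with respect to a stability function on an abelian category (as in \cite[Section~2]{bri}), while accounting for the one way in which $Z_{\alpha_0}$ fails to be a genuine stability function. By Proposition \ref{prop:a>a_0}~(2) we have $Z_{\alpha_0}(E) \in \bH \cup \bR_{\leq 0}$ for every nonzero $E \in \mcA_\mu$, so the phase $\phi_{\alpha_0}(E) \in (0,1]$ is well defined except on the massless objects, which by Proposition \ref{prop:a>a_0}~(3) are precisely the $A[1]^{\oplus N}$. These all sit at phase $1$ (since $\arg 0 = \pi$) and are $Z_{\alpha_0}$-semistable, so they can only contribute to the top graded piece of an HN filtration; the strategy is to build the filtration by the usual maximal-destabilizing-subobject procedure and to absorb the massless part into the phase-$1$ factor. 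The statement for an arbitrary object of $D^b(X)$ then follows from the case of $\mcA_\mu$ by filtering by the cohomology objects of the bounded t-structure with heart $\mcA_\mu$.

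For the existence inside $\mcA_\mu$, I would use the standard reduction: it is enough to rule out infinite chains of subobjects $\cdots \subset E_{j+1} \subset E_j \subset \cdots \subset E$ with $\phi_{\alpha_0}(E_{j+1}) > \phi_{\alpha_0}(E_j)$ for all $j$, and dually infinite chains of quotients of $E$ with strictly decreasing phase. To verify these I would exploit that $\Im Z_{\alpha_0}$ is additive, non-negative on $\mcA_\mu$ (Proposition \ref{prop:a>a_0}~(2)), and takes values in the discrete group $\tfrac{1}{r_0^2\sqrt{d}}\bZ$ (Corollary \ref{cor:discrete}). Along a chain of subobjects of $E$ the quantity $\Im Z_{\alpha_0}$ is monotone and bounded in $[0, \Im Z_{\alpha_0}(E)]$, hence stabilizes after finitely many steps; once it is constant the successive sub/quotients lie in $\ker \Im Z_{\alpha_0}$, where $Z_{\alpha_0} \in \bR_{\leq 0}$ and the remaining analysis is governed by the (again discrete) real part together with the Noetherianity of $\mcA_\mu$. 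This yields a maximal destabilizing subobject, and iterating gives the filtration with $\phi_{\alpha_0}(A_1) > \cdots > \phi_{\alpha_0}(A_n)$.

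Granting the HN property, the ``in particular'' assertion that $\mcP_{\alpha_0}$ is a slicing is then formal. Condition (1) of Definition \ref{def:slicing} holds by the defining relation $\mcP_{\alpha_0}(\phi+1) = \mcP_{\alpha_0}(\phi)[1]$, condition (3) is the HN filtration just produced, and the orthogonality condition (2) reduces, via shifts and Serre duality as in the proof of Lemma \ref{lem:massstability}, to the Hom-vanishing between $Z_{\alpha_0}$-semistable objects of strictly decreasing phase established in Lemma \ref{lem:see-saw}~(1).

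The main obstacle is exactly the degeneration locus $\ker \Im Z_{\alpha_0}$, and in particular the massless objects $A[1]^{\oplus N}$ with $Z_{\alpha_0}=0$: these carry no phase information, so the classical criterion of \cite{bri} (which assumes $Z$ never vanishes on nonzero objects) does not apply verbatim, and a priori they could create infinite chains such as $A[1] \subset A[1]^{\oplus 2} \subset \cdots$. The resolution is to combine the explicit classification of massless objects in Proposition \ref{prop:a>a_0}~(3) with the Noetherianity of $\mcA_\mu$, which shows that such chains cannot occur within a fixed object and that the massless contribution is always finite and collectible into the phase-$1$ factor.
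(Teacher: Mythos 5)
Your proposal is correct and follows essentially the same route as the paper: the authors simply cite a general Harder--Narasimhan existence result for weak stability functions with discrete image on a noetherian heart (\cite[Proposition 2.8]{liu22}), with noetherianity of $\mcA_\mu$ supplied by \cite[Proposition 5.0.1]{AP} and the discreteness and positivity of $Z_{\alpha_0}$ by Lemma \ref{lem:computation of central charge} and Proposition \ref{prop:a>a_0}, and deduce the slicing statement from Lemma \ref{lem:see-saw}. What you have written is an unfolding of the proof of that cited proposition (chain conditions via monotonicity and discreteness of $\Im Z_{\alpha_0}$, plus noetherianity, with the massless objects $A[1]^{\oplus N}$ absorbed into the phase-one piece), so the two arguments coincide in substance.
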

\begin{proof}
The first statement follows from \cite[Proposition 2.8]{liu22}. 
Indeed, we know that 
$\mcA_\mu$ is noetherian by \cite[Proposition 5.0.1]{AP}, Lemma \ref{lem:computation of central charge} and Proposition \ref{prop:a>a_0}. 
The second statement follows from the first statement and Lemma \ref{lem:see-saw}. 
\end{proof}

If the slicing $\mcP_{\alpha_0}$ is locally finite, then $\sigma_{\alpha_0}$ is a lax pre-stability condition on $D^b(X)$. 
We also use the same terminology as in the text below Definition \ref{def:pre-stability} as well as in Definition \ref{def:mass} and Definition \ref{def:massless} for $\sigma_\alpha=(Z_{\alpha_0}, \mcP_{\alpha_0})$. 
To prove the local finiteness, we study properties of massless objects.

\begin{lem}\label{lem:central charge is zero}
Let $I$ be an interval satisfying $I \subset (a,a+1]$ for some $a \in \bR$. 
For a non-zero object $E \in \mcP_{\alpha_0}(I)$, $E$ is massless if and only if $Z_{\alpha_0}(E)=0$ holds. 
\end{lem}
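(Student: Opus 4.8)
The plan is to prove the two implications separately, and to observe that only the implication ``$Z_{\alpha_0}(E)=0 \Rightarrow E$ massless'' will use the hypothesis $I \subset (a,a+1]$. For the direction ``$E$ massless $\Rightarrow Z_{\alpha_0}(E)=0$'' I would simply invoke the triangle inequality: the estimate $|Z_{\alpha_0}(E)| \le m_{\sigma_{\alpha_0}}(E)$, whose proof $|\sum_i Z_{\alpha_0}(A_i)| \le \sum_i |Z_{\alpha_0}(A_i)|$ is valid for any slicing equipped with a compatible central charge exactly as in (\ref{eq:triangle inequality}), immediately forces $Z_{\alpha_0}(E)=0$ as soon as $m_{\sigma_{\alpha_0}}(E)=0$. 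This direction needs no assumption on $I$.

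For the converse I would start from the Harder--Narasimhan filtration of $E$ for the slicing $\mcP_{\alpha_0}$ (which exists by Lemma \ref{lem:HN}), with semistable factors $A_1, \dots, A_n$ of phases $\phi_1 > \cdots > \phi_n$. Since $E \in \mcP_{\alpha_0}(I)$ and $\mcP_{\alpha_0}(I)$ is the extension closure of the $\mcP_{\alpha_0}(\phi)$ for $\phi \in I$, the standard characterization of $\mcP(I)$ from \cite{bri} yields $\phi_i \in I \subset (a,a+1]$ for all $i$. By the construction of $\mcP_{\alpha_0}$ — applying Proposition \ref{prop:a>a_0}(2) to $\mcP_{\alpha_0}(\phi)$ for $\phi \in (0,1]$ and extending by the shift $\mcP_{\alpha_0}(\phi+1)=\mcP_{\alpha_0}(\phi)[1]$ — each factor satisfies $Z_{\alpha_0}(A_i)=r_i \exp(\sqrt{-1}\pi\phi_i)$ with $r_i \coloneqq |Z_{\alpha_0}(A_i)| \ge 0$, whence $m_{\sigma_{\alpha_0}}(E)=\sum_i r_i$. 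It therefore suffices to show that every $r_i$ vanishes.

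The crux is a positivity argument exploiting that all phases lie in a single half-open interval of length one. After rotating by $e^{-\sqrt{-1}\pi a}$, I would note that $\Im\!\big(e^{-\sqrt{-1}\pi a} Z_{\alpha_0}(A_i)\big) = r_i \sin\!\big(\pi(\phi_i - a)\big) \ge 0$, since $\pi(\phi_i-a) \in (0,\pi]$, with equality precisely when $r_i=0$ or $\phi_i=a+1$. Applying $\Im(e^{-\sqrt{-1}\pi a}(-))$ to the relation $\sum_i Z_{\alpha_0}(A_i)=Z_{\alpha_0}(E)=0$ and using nonnegativity of each summand forces $r_i=0$ for every $i$ with $\phi_i \ne a+1$. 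For the surviving indices one has $Z_{\alpha_0}(A_i)=-r_i e^{\sqrt{-1}\pi a}$, and taking $\Re(e^{-\sqrt{-1}\pi a}(-))$ of the same vanishing sum gives $\sum_{\phi_i=a+1} r_i=0$, so these too vanish. Hence $m_{\sigma_{\alpha_0}}(E)=\sum_i r_i=0$, i.e.\ $E$ is massless.

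I expect the only genuinely delicate point to be this last step: verifying that the Harder--Narasimhan phases really lie in a half-open length-one interval, and handling the boundary direction $\phi_i=a+1$ separately via the real part. Everything else is formal, the content being simply that a finite sum of vectors $r_i e^{\sqrt{-1}\pi\phi_i}$ confined to a closed half-plane can vanish only if every $r_i$ is zero.
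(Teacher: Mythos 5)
Your proof is correct and follows essentially the same route as the paper: the forward direction is the triangle inequality $|Z_{\alpha_0}(E)|\le m_{\sigma_{\alpha_0}}(E)$, and the converse is the observation that the Harder--Narasimhan factors of $E$ have central charges confined to the closed half-plane determined by $(a,a+1]$, so their vanishing sum forces each $Z_{\alpha_0}(A_i)=0$. The only cosmetic difference is that the paper organizes this positivity argument as an induction peeling off the top factor $A_1$ from the identity $Z_{\alpha_0}(A_1)=-\sum_{i\ge 2}Z_{\alpha_0}(A_i)$, whereas you rotate by $e^{-\sqrt{-1}\pi a}$ and take imaginary and then real parts all at once; both are the same half-plane argument.
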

\begin{proof}
If $E$ is massless, the equality $Z_{\alpha_0}(E)=0$ follows from the inequality \[ |Z_{\alpha_0}(E)| \leq m_{\sigma_{\alpha_0}}(E)=0.\] 
Assume that $Z_{\alpha_0}(E)=0$ holds.
Take the Harder--Narasimhan filtration 
\[0=E_0 \to E_1 \to E_2 \to \cdots \to E_{n-1} \to E_n=E \]
of $E$ with the $\sigma_{\alpha_0}$-semistable factors $A_i \in \mcP_{\alpha_0}(\phi_i)$ and $\phi_1> \cdots > \phi_n$ in the interval $I$.
By the assumption $Z_{\alpha_0}(E)=0$, we have
\[ Z_{\alpha_0}(A_1)=-\sum_{i=2}^nZ_{\alpha_0}(A_i). \]
Since $I \subset (a,a+1]$, we obtain
\[ Z_{\alpha_0}(A_1)=0,~\sum_{i=2}^nZ_{\alpha_0}(A_i)=0.\]
Inductively, we get
\[ Z_{\alpha_0}(A_1)=Z_{\alpha_0}(A_2)=\cdots=Z_{\alpha_0}(A_n)=0.  \]
Therefore, we have $m_{\sigma_{\alpha_0}}(E)=0$.
\end{proof}

\begin{lem}\label{lem:massless is semistable}
Let $I$ be an interval satisfying $I \subset (a,a+1]$ for some $a \in \bR$. 
If $E \in \mcP_{\alpha_0}(I)$ is massless, then $E$ is $\sigma_{\alpha_0}$-semistable and $E \simeq A[\ell]^{\oplus N}$ holds for some integer $\ell$ and a non-negative integer $N$. 
\end{lem}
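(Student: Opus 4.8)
The plan is to read off the structure of a massless object directly from its Harder--Narasimhan factors, using the fact that for the degenerate slicing $\mcP_{\alpha_0}$ a massless semistable object is forced to have \emph{integer} phase. First I would observe that, for $E \neq 0$ (the case $E = 0$ being the trivial instance $N = 0$), the mass is by definition $m_{\sigma_{\alpha_0}}(E) = \sum_{i=1}^n |Z_{\alpha_0}(A_i)|$, where $A_1, \dots, A_n$ are the $\sigma_{\alpha_0}$-semistable Harder--Narasimhan factors of $E$, with phases $\phi_1 > \cdots > \phi_n$. Since $E$ is massless this sum vanishes, so every factor $A_i$ satisfies $Z_{\alpha_0}(A_i) = 0$ and is therefore itself massless.

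The key step is the claim that each such massless semistable factor has integer phase. Indeed, after applying the shift $\mcP_{\alpha_0}(\phi) = \mcP_{\alpha_0}(\phi - 1)[1]$ a suitable integer number of times, I may assume $A_i[-k_i] \in \mcP_{\alpha_0}((0,1]) = \mcA_\mu$ for a unique $k_i \in \bZ$; this object is again massless, so $Z_{\alpha_0}(A_i[-k_i]) = 0$, and the convention $\arg(0) = \pi$ gives $\phi_{\alpha_0}(A_i[-k_i]) = \tfrac{1}{\pi}\arg(0) = 1$. Hence $\phi_i = k_i + 1 \in \bZ$. Because $E \in \mcP_{\alpha_0}(I)$ and $\mcP_{\alpha_0}$ is a slicing by Lemma \ref{lem:HN}, all phases $\phi_i$ lie in $I \subset (a, a+1]$; but a half-open interval of length one contains exactly one integer, so the integers $\phi_1 > \cdots > \phi_n$ must all coincide. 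This forces $n = 1$, i.e. $E$ is $\sigma_{\alpha_0}$-semistable.

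It then remains to identify $E$. Writing $\ell := \phi_{\alpha_0}(E) \in \bZ$, the shift $E[1 - \ell]$ lies in $\mcP_{\alpha_0}(1) \subset \mcA_\mu$ and is massless, so $Z_{\alpha_0}(E[1-\ell]) = 0$. Proposition \ref{prop:a>a_0}(3) then yields $E[1 - \ell] \simeq A[1]^{\oplus N}$ for some $N \geq 0$, and shifting back gives $E \simeq A[\ell]^{\oplus N}$, as desired.

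The only genuinely delicate point is the integer-phase claim: since the central charge of a massless object vanishes, its argument carries no information, and one must instead pin the phase down from the slicing itself via the convention $\arg(0) = \pi$. Once this is in place, the remaining arguments are the standard slicing bookkeeping together with the elementary observation that $(a, a+1]$ contains a single integer, and the explicit description of the kernel of $Z_{\alpha_0}$ on $\mcA_\mu$ from Proposition \ref{prop:a>a_0}(3).
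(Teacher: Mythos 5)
Your proof is correct and follows essentially the same route as the paper: massless forces each Harder–Narasimhan factor to have vanishing central charge, hence integer phase by the $\arg(0)=\pi$ convention, the interval $(a,a+1]$ admits only one integer so $n=1$, and Proposition \ref{prop:a>a_0}(3) identifies $E$ as a direct sum of shifts of $A$. The only difference is that you spell out the justification for the integer-phase claim, which the paper leaves as a one-line remark.
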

\begin{proof}
Assume that $m_{\sigma_{\alpha_0}}(E)=0$ holds.
Take the Harder--Narasimhan filtration 
\[0=E_0 \to E_1 \to E_2 \to \cdots \to E_{n-1} \to E_n=E \]
of $E$ with the $\sigma_{\alpha_0}$-semistable factors $A_i \in \mcP_{\alpha_0}(\phi_i)$ and $\phi_1> \cdots > \phi_n$ in the interval $I$.
Since $m_{\sigma_{\alpha_0}}(E)=0$, we have
\[ Z_{\alpha_0}(A_1)=Z_{\alpha_0}(A_2)=\cdots=Z_{\alpha_0}(A_n)=0.  \]
Note that $\phi_{\sigma_{\alpha_0}}(A_i)$ is an integer for any $1 \leq i \leq n$.
Since $I \subset (a,a+1]$, we have $n=1$.
Therefore, $E$ is $\sigma_{\alpha_0}$-semistable.  
By Proposition \ref{prop:a>a_0} (3), the second assertion also holds.
\end{proof}

\begin{lem}\label{lem:sub of massless}
Let $I$ be an interval satisfying $I \subset (a,a+1]$ for some $a \in \bR$. 
For a massless object $E \in \mcP_{\alpha_0}(I)$ and a strict short exact sequence
\[ 0 \to F \to E \to G \to 0  \]
in the quasi-abelian category $\mcP_{\alpha_0}(I)$, the objects
$F$ and $G$ are also massless.
\end{lem}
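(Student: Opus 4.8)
The plan is to reduce the assertion to the vanishing of central charges and then invoke Lemma \ref{lem:central charge is zero}. Since $I \subset (a,a+1]$, that lemma says a nonzero object of $\mcP_{\alpha_0}(I)$ is massless precisely when its central charge vanishes; in particular, the hypothesis that $E$ is massless yields $Z_{\alpha_0}(E)=0$. It therefore suffices to prove $Z_{\alpha_0}(F)=Z_{\alpha_0}(G)=0$, after which the same lemma gives that $F$ and $G$ are massless. (When $F$ or $G$ is the zero object the conclusion is immediate, so I assume both are nonzero.)

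The first step is to extract a linear relation from the given strict short exact sequence. A strict short exact sequence $0 \to F \to E \to G \to 0$ in the quasi-abelian category $\mcP_{\alpha_0}(I)$ induces an exact triangle $F \to E \to G \to F[1]$ in $D^b(X)$, and applying the additive homomorphism $Z_{\alpha_0}$ gives $Z_{\alpha_0}(F)+Z_{\alpha_0}(G)=Z_{\alpha_0}(E)=0$, that is, $Z_{\alpha_0}(F)=-Z_{\alpha_0}(G)$.

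The second step is to locate these central charges. By definition $F$ and $G$ belong to $\mcP_{\alpha_0}(I)$, so each is a finite extension of $\sigma_{\alpha_0}$-semistable objects whose phases lie in $I$; for a semistable object of phase $\phi$ the compatibility of $(Z_{\alpha_0},\mcP_{\alpha_0})$ gives $Z_{\alpha_0} \in \bR_{\geq 0}\exp(\sqrt{-1}\pi\phi)$. By additivity, both $Z_{\alpha_0}(F)$ and $Z_{\alpha_0}(G)$ lie in the cone $C \coloneqq \{0\} \cup \{\, r\exp(\sqrt{-1}\pi\phi) : r>0,\ \phi \in (a,a+1]\,\}$. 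Since $I$ has length at most one, $C$ is a closed half-plane with one bounding ray deleted, hence a salient convex cone: $C \cap (-C)=\{0\}$. Consequently the relation $Z_{\alpha_0}(F)=-Z_{\alpha_0}(G)$ with $Z_{\alpha_0}(F),Z_{\alpha_0}(G) \in C$ forces $Z_{\alpha_0}(F)=Z_{\alpha_0}(G)=0$, completing the proof via Lemma \ref{lem:central charge is zero}.

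The only delicate point — and the place where the hypothesis $I \subset (a,a+1]$ is genuinely needed — is the saliency of $C$. The included endpoint $a+1$ contributes the ray at angle $\pi(a+1)$, whose antipodal ray sits at angle $\pi a$, corresponding exactly to the \emph{excluded} open endpoint $a$. Thus no pair of opposite nonzero rays both lie in $C$, which is what rules out a nontrivial cancellation $Z_{\alpha_0}(F)=-Z_{\alpha_0}(G)\neq 0$. Everything else is the routine additivity of $Z_{\alpha_0}$ on triangles together with the standard fact that objects of $\mcP_{\alpha_0}(I)$ have Harder--Narasimhan factors with phases in $I$.
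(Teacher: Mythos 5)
Your proposal is correct and follows essentially the same route as the paper: deduce $Z_{\alpha_0}(E)=0$ from Lemma \ref{lem:central charge is zero}, use additivity to get $Z_{\alpha_0}(F)=-Z_{\alpha_0}(G)$, conclude both vanish because $I\subset(a,a+1]$ confines the central charges to a salient cone, and apply Lemma \ref{lem:central charge is zero} again. The paper leaves the cone/saliency step implicit in the phrase ``Since $I \subset (a,a+1]$''; you have simply spelled it out, correctly.
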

\begin{proof}
By the assumption $m_{\sigma_{\alpha_0}}(E)=0$, we have $Z_{\alpha_0}(E)=0$ by Lemma \ref{lem:central charge is zero}.
Therefore, $Z_{\alpha_0}(F)=-Z_{\alpha_0}(G)$ holds. 
Since $I \subset (a,a+1]$, we obtain
\[ Z_{\alpha_0}(F)=Z_{\alpha_0}(G)=0. \]
By Lemma \ref{lem:central charge is zero}, $F$ and $G$ are also massless.
\end{proof}

\begin{rmk}\label{rem:A is sigma-stable}
By Proposition \ref{prop:a>a_0} (3), Lemma \ref{lem:central charge is zero} and Lemma \ref{lem:sub of massless}, $A[1]$ is a simple object in $\mcP_{\alpha_0}(1)$. In particular, $A[1]$ is a $\sigma_{\alpha_0}$-stable object of phase one.
\end{rmk}

By Proposition \ref{prop:a>a_0} (3), Lemma \ref{lem:central charge is zero}, Lemma \ref{lem:massless is semistable} and Lemma \ref{lem:sub of massless}, we obtain the following lemma:

\begin{lem}\label{lem:chain in mass less}
Let $I$ be an interval satisfying $I \subset (a, a+1]$ for some $a \in \bR$.
For a massless object $E \in \mcP_{\alpha_0}(I)$, a chain 
\[ \cdots \subset E_2 \subset E_1 \subset E_0=E \]
of strict subobjects of $E$ in the quasi-abelian category $\mcP_{\alpha_0}(I)$ terminates.
\end{lem}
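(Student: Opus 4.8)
The plan is to reduce the descending chain condition for massless objects to that for finite-dimensional vector spaces, exploiting the fact that every massless object is a direct sum of copies of a single stable object.

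First I would invoke Lemma \ref{lem:massless is semistable}: a nonzero massless object of $\mcP_{\alpha_0}(I)$ is isomorphic to $A[\ell]^{\oplus N}$ for some integer $\ell$ and some $N \geq 0$. Since $A[1]$ is $\sigma_{\alpha_0}$-stable of phase one (Remark \ref{rem:A is sigma-stable}), the object $A[\ell]$ has phase $\ell$, so for $A[\ell]^{\oplus N}$ to lie in $\mcP_{\alpha_0}(I)$ we must have $\ell \in I$. Because $I \subset (a, a+1]$ sits inside a half-open interval of length one, it contains at most one integer; hence the shift $\ell$ is forced to be the same for all nonzero massless objects of $\mcP_{\alpha_0}(I)$. (If $I$ contains no integer there are no nonzero massless objects and the statement is vacuous.)

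Next I would apply Lemma \ref{lem:sub of massless} repeatedly along the chain $\cdots \subset E_2 \subset E_1 \subset E_0 = E$: every strict subobject of a massless object is massless, so each $E_i$ is massless and therefore isomorphic to $A[\ell]^{\oplus N_i}$ with the common shift $\ell$ and some $N_i \ge 0$, where $N_0 = N$. Since $A$ is spherical, $\End(A[\ell]) = \bC$ (equivalently, one may use Lemma \ref{lem:see-saw} (2)), so $\Hom(A[\ell]^{\oplus m}, A[\ell]^{\oplus n}) \cong \mathrm{Mat}_{n \times m}(\bC)$. I would then argue that each strict inclusion $E_{i+1} \subset E_i$, being a monomorphism, corresponds to a matrix $M$ with trivial kernel: a nonzero kernel vector would give a nonzero map $A[\ell] \to A[\ell]^{\oplus N_{i+1}}$ annihilated by $M$, contradicting monicity. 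This forces $N_{i+1} \le N_i$, so $N_0 \ge N_1 \ge N_2 \ge \cdots \ge 0$ is a non-increasing sequence of non-negative integers and stabilizes at some index $n$. Once $N_{i+1} = N_i$, the corresponding square matrix is injective, hence invertible, so $E_{i+1} \to E_i$ is an isomorphism and the chain terminates.

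The main obstacle I anticipate is the bookkeeping that translates the quasi-abelian notion of \emph{strict} subobject in $\mcP_{\alpha_0}(I)$ into the plain count of copies of $A[\ell]$. This is resolved by passing through categorical monomorphisms together with $\End(A[\ell]) = \bC$: monicity pins down the monotonicity $N_{i+1} \le N_i$ and forces isomorphisms precisely when the count stabilizes. Everything else is a direct consequence of the structure theorem for massless objects (Lemma \ref{lem:massless is semistable}) and their stability under strict sub/quotient (Lemma \ref{lem:sub of massless}).
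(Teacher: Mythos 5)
Your proof is correct. The paper derives this lemma directly from Proposition \ref{prop:a>a_0}~(3) and Lemmas \ref{lem:central charge is zero}, \ref{lem:massless is semistable}, \ref{lem:sub of massless}, with the termination step ultimately resting on the identification $\mcP_{\alpha_0}(1)=\mcP_\alpha(1)$ for $\alpha>\alpha_0$ (Remark \ref{rem:P(1) is abelian}): after shifting so that all the $E_i$ have phase an integer, the chain lives in the finite-length abelian category $\mcP_\alpha(1)$ attached to the honest stability condition $\sigma_\alpha$, and so must terminate. You use the same structural inputs --- every $E_i$ is massless by Lemma \ref{lem:sub of massless} and hence of the form $A[\ell]^{\oplus N_i}$ by Lemma \ref{lem:massless is semistable}, with a common shift $\ell$ because $I$ contains at most one integer --- but you replace the appeal to finite length of $\mcP_\alpha(1)$ with an explicit linear-algebra count: since $\End(A[\ell])=\bC$, each strict (hence categorical) monomorphism $A[\ell]^{\oplus N_{i+1}}\to A[\ell]^{\oplus N_i}$ is given by a matrix with trivial kernel, forcing $N_{i+1}\le N_i$ and an isomorphism once equality holds. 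Your version is more self-contained and does not need to pass back to the undeformed stability conditions $\sigma_\alpha$; the paper's version is shorter given that Remark \ref{rem:P(1) is abelian} is already in place. Both are valid.
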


We are now ready to prove the local finiteness of the slicing $\mcP_{\alpha_0}$:

\begin{prop} \label{prop:locfin}
The slicing $\mcP_{\alpha_0}$ is locally finite. 
In particular, $\sigma_{\alpha_0}$ is a lax pre-stability condition on $D^b(X)$.
\end{prop}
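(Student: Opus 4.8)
The plan is to verify local finiteness straight from Definition \ref{def:locally finite}: I must produce $\varepsilon>0$ such that, for every $\phi\in\bR$, the quasi-abelian category $\mcP_{\alpha_0}((\phi-\varepsilon,\phi+\varepsilon))$ has finite length, i.e. every chain of strict subobjects terminates. Throughout I fix any $\varepsilon\in(0,1/2)$ and write $J=(\phi-\varepsilon,\phi+\varepsilon)$; since $J$ has length $<1$, the category $\mcP_{\alpha_0}(J)$ is quasi-abelian by {\cite[Lemma 4.3]{bri}}. The key structural input is Corollary \ref{cor:discrete}: the image $Z_{\alpha_0}(\Halg(X,\bZ))$ lies in the discrete lattice $L:=\frac{1}{r_0^2}\left(\bZ\oplus\frac{1}{\sqrt{d}}\sqrt{-1}\,\bZ\right)\subset\bC$. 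In particular there is a constant $m_0>0$ with $|Z_{\alpha_0}(v)|\ge m_0$ whenever $Z_{\alpha_0}(v)\neq 0$.

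First I would record two elementary consequences of working inside a narrow cone. Every object of $\mcP_{\alpha_0}(J)$ has all its Harder--Narasimhan factors of phase in $\bar J$, so the value $Z_{\alpha_0}(F)$ of any $F\in\mcP_{\alpha_0}(J)$ lies in the closed convex cone $\mcC:=\bR_{\ge 0}\cdot\{\exp(\sqrt{-1}\pi\psi):\psi\in\bar J\}$, of angular width $2\pi\varepsilon<\pi$. This yields the comparability $|Z_{\alpha_0}(F)|\ge\cos(\pi\varepsilon)\,m_{\sigma_{\alpha_0}}(F)$, so mass and $|Z_{\alpha_0}|$ are equivalent on $\mcP_{\alpha_0}(J)$. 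Moreover, for a fixed $E\in\mcP_{\alpha_0}(J)$ and any strict subobject $F\subset E$, both $Z_{\alpha_0}(F)$ and $Z_{\alpha_0}(E)-Z_{\alpha_0}(F)=Z_{\alpha_0}(E/F)$ lie in $\mcC$; since $\mcC$ is pointed of width $<\pi$, the region $\mcC\cap(Z_{\alpha_0}(E)-\mcC)$ is bounded, and intersecting with the lattice $L$ shows that $Z_{\alpha_0}$ takes only finitely many values on the strict subobjects of $E$.

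Next I would reduce an arbitrary chain of strict subobjects to its massless part. Fixing the $\bR$-linear functional $\ell(z):=\Re(\exp(-\sqrt{-1}\pi\phi)\,z)$, which is strictly positive on $\mcC\setminus\{0\}$, the quantity $\lambda(F):=\ell(Z_{\alpha_0}(F))$ is additive on strict short exact sequences in $\mcP_{\alpha_0}(J)$, is non-negative, and vanishes exactly on the massless objects by Lemma \ref{lem:central charge is zero}. Along any chain of strict subobjects of $E$, additivity makes $\lambda$ monotone, while the previous paragraph shows it takes only finitely many values; hence $\lambda$ stabilizes, and beyond that point every successive strict quotient has $\lambda=0$ and is therefore massless. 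It remains to bound the resulting tail, whose successive quotients are all massless.

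Finally I would dispose of this massless tail using the precise description of massless objects. By Lemma \ref{lem:massless is semistable} and Remark \ref{rem:A is sigma-stable}, a nonzero massless object of $\mcP_{\alpha_0}(J)$ is isomorphic to $A[\ell_0]^{\oplus N}$, where $\ell_0$ is the unique integer in $J$, and $A[\ell_0]$ is a simple object of the quasi-abelian category; together with Lemma \ref{lem:sub of massless} this shows that the full subcategory of massless objects is closed under strict sub- and quotient objects and is generated by the single simple object $A[\ell_0]$. Lemma \ref{lem:chain in mass less} supplies the descending chain condition there, and the finiteness of $\hom(-,A[\ell_0])$ bounds the number of copies of $A[\ell_0]$, so this subcategory has finite length in both directions. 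Combining the finiteness of the set of $\lambda$-values on strict subobjects (third paragraph) with finite length on the massless part forces termination of the original chain, giving the finite length of $\mcP_{\alpha_0}(J)$; as $\varepsilon$ is independent of $\phi$, this proves local finiteness, whence $\sigma_{\alpha_0}$ is a lax pre-stability condition. I expect the genuine difficulty to be exactly this interface between the massive and massless parts: the kernel of $Z_{\alpha_0}$ (which is precisely what makes $\sigma_{\alpha_0}$ merely lax) means that discreteness alone no longer bounds chain length, and one must feed in the structural Lemmas \ref{lem:central charge is zero}--\ref{lem:chain in mass less} on massless objects to close the argument.
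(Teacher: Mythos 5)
Your proof is correct in substance and uses exactly the same ingredients as the paper (the discreteness of $Z_{\alpha_0}(\Halg(X,\bZ))$ from Corollary \ref{cor:discrete}, the thin-cone/monotone-functional argument of Bridgeland, and the structure results on massless objects, Lemmas \ref{lem:central charge is zero}--\ref{lem:chain in mass less}), but it organizes the case analysis differently. The paper splits a descending chain into two cases: either some $E_N$ is massless, in which case Lemma \ref{lem:chain in mass less} applies, or every $E_i$ is massive, in which case it invokes the argument of \cite[Lemma 4.4]{bri08} wholesale. You instead run the monotone functional $\lambda$ uniformly, observe that it stabilizes by discreteness, and then confront the residual case in which all $E_i$ stay massive while the successive quotients $E_i/E_{i+1}$ become massless. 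This is precisely the point where Bridgeland's original argument (which concludes that a quotient with vanishing central charge is zero) breaks down in the lax setting, and your version makes the needed repair explicit rather than leaving it inside the citation; in that sense your decomposition is the more careful one.

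One step should be tightened. In your last paragraph the objects $E_{N+k}$ in the tail need not themselves be massless --- only the quotients $E_i/E_{i+1}\simeq A[\ell_0]^{\oplus N_i}$ are --- so ``finite length of the massless subcategory'' is not literally the statement that terminates the chain. What does the job is the mechanism you allude to with $\hom(-,A[\ell_0])$: since $\Ext^1(A,A)=0$, the iterated extensions split and $E_N/E_{N+k}\simeq A[\ell_0]^{\oplus M_k}$ with $M_k\geq k$, while the strict epimorphism $E_N\twoheadrightarrow E_N/E_{N+k}$ forces $M_k\leq \hom(E_N,A[\ell_0])<\infty$. Spelling this out (and noting that composites of strict monomorphisms are strict, so the $E_i$ really are strict subobjects of $E$ and $\lambda$ takes finitely many values along the whole chain) closes the argument.
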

\begin{proof}
Take a real number $0< \epsilon < 1/2$.
We fix a real number $\phi \in \bR$.
We consider a quasi-abelian category $\mcP_{\alpha_0}(\phi-\epsilon,\phi+\epsilon)$.
Take a chain 
\[ \cdots \subset E_2 \subset E_1 \subset E_0=E \]
of strict subobjects of $E$ in $\mcP_{\alpha_0}(\phi-\epsilon,\phi+\epsilon)$.
If there exists $N \geq 0$ such that $E_N$ is a massless object, the above chain terminates by Lemma \ref{lem:chain in mass less}. 
Assume that $E_i$ is massive for all $i \geq 0$.
By the argument same as the proof of \cite[Lemma 4.4]{bri08}, the above chain terminates.
\end{proof}

To prove the support property, we will use the following lemmas: 

\begin{lem}[{\cite[Lemma 8.1]{bri08}}] \label{lem:CS}
There exists a constant $C>0$ such that the following property holds. 
Let $v \in \Halg(X, \bZ)_\bR$ be a vector satisfying one of the following conditions\textup{:} 
\begin{itemize}
    \item $v^2 \geq 0$, 
    \item $v^2=-2$ and $Z_{\alpha_0}(v) \neq 0$. 
\end{itemize}
Then we have 
\begin{equation} \label{eq:support}
\|v \| \leq C|Z_{\alpha_0}(v)|. 
\end{equation}
\end{lem}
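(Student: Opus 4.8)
The plan is to read off the geometry of the central charge. We have $Z_{\alpha_0}(v)=\langle \Omega_0, v\rangle$ for $\Omega_0 := \exp(B_0+\sqrt{-1}\alpha_0 H)\in\mcP^+(X)$, so by the very definition of $\mcP^+(X)$ the real and imaginary parts $\Re\Omega_0,\Im\Omega_0$ span a positive-definite plane $P\subset\Halg(X,\bZ)_\bR$; as the Mukai lattice has signature $(2,\rho(X))$, the orthogonal complement $P^\perp$ is negative-definite. From $\langle\Omega_0,\Omega_0\rangle=0$ one gets that $\Re\Omega_0$ and $\Im\Omega_0$ are orthogonal with a common positive square $t:=\langle\Re\Omega_0,\Re\Omega_0\rangle>0$. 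Decomposing $v=v_P+v^\perp$ with $v_P\in P$ and $v^\perp\in P^\perp$, I would record the two identities
\[
|Z_{\alpha_0}(v)|^2=t\cdot\langle v_P,v_P\rangle,\qquad v^2=\langle v_P,v_P\rangle-|v^\perp|^2,
\]
where $|v^\perp|^2:=-\langle v^\perp,v^\perp\rangle\ge 0$. Since all norms on the finite-dimensional space $\Halg(X,\bZ)_\bR$ are equivalent, it suffices to bound the particular norm $\|v\|_0^2:=\langle v_P,v_P\rangle+|v^\perp|^2$ by a multiple of $|Z_{\alpha_0}(v)|^2$.

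In the first case $v^2\ge 0$ the second identity gives $|v^\perp|^2\le\langle v_P,v_P\rangle$, hence $\|v\|_0^2\le 2\langle v_P,v_P\rangle=(2/t)\,|Z_{\alpha_0}(v)|^2$, and the desired inequality (\ref{eq:support}) follows at once. Note that this case uses nothing beyond positive-definiteness of $P$; it holds for arbitrary real vectors.

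The case $v^2=-2$ is where the real obstacle lies, and I expect it to be the main point. Here the second identity reads $|v^\perp|^2=\langle v_P,v_P\rangle+2$, so $\|v\|_0^2=2\langle v_P,v_P\rangle+2=(2/t)\,|Z_{\alpha_0}(v)|^2+2$. The additive constant $2$ cannot be absorbed into $|Z_{\alpha_0}(v)|^2$ when the projection $v_P$ is small — geometrically, these are the vectors approaching the negative-definite plane $P^\perp$, on which $Z_{\alpha_0}$ vanishes while the condition $v^2=-2$ persists. This is exactly where \emph{integrality} must be used: the lemma is only ever applied to genuine lattice classes $v=v(E)$, and for $v\in\Halg(X,\bZ)$ with $Z_{\alpha_0}(v)\neq 0$ Corollary \ref{cor:discrete} confines $Z_{\alpha_0}(v)$ to the lattice $\tfrac{1}{r_0^2}\bigl(\bZ\oplus\tfrac{1}{\sqrt d}\bZ\bigr)\subset\bC$, whence $|Z_{\alpha_0}(v)|\ge\delta$ for the fixed constant $\delta:=1/(r_0^2\sqrt d)>0$.

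With this lower bound in hand the third case closes: $\|v\|_0^2\le(2/t)\,|Z_{\alpha_0}(v)|^2+2\le\bigl(2/t+2/\delta^2\bigr)|Z_{\alpha_0}(v)|^2$. Taking $C$ to be the larger of the constants produced in the two cases, rescaled by the fixed factor comparing $\|\cdot\|_0$ with the chosen norm $\|\cdot\|$, yields a single constant for which (\ref{eq:support}) holds in all cases. The only genuinely delicate step is the $(-2)$-case, and the discreteness supplied by Corollary \ref{cor:discrete} is precisely what rescues it; the $v^2\ge 0$ case and the norm comparisons are routine.
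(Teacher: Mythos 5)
Your proof is correct, and for the delicate case it takes a genuinely different route from the paper. The paper simply defers to the proof of \cite[Lemma 8.1]{bri08}: the first part of that proof gives the bound for all real $v$ with $v^2\ge 0$ exactly as in your first case, while for $v^2=-2$ Bridgeland's argument runs through discreteness of the \emph{lattice}: the set of integral $(-2)$-classes whose projection to the positive plane $P$ is bounded is finite, and after discarding those orthogonal to $\Omega=\exp(B_0+\sqrt{-1}\alpha_0 H)$ (which is what the hypothesis $Z_{\alpha_0}(v)\neq 0$ does), one takes the positive minimum of $|Z_{\alpha_0}|$ over that finite set. You instead get the uniform lower bound $|Z_{\alpha_0}(v)|\ge 1/(r_0^2\sqrt{d})$ directly from the discreteness of the \emph{image} of $Z_{\alpha_0}$ (Corollary \ref{cor:discrete}), which is cleaner and fully quantitative, but relies on the rationality of $B_0$ and $\alpha_0^2$ special to this setting; Bridgeland's finiteness argument works for an arbitrary $\Omega\in\mcP^+_0(X)$. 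Both routes rest on the same two identities for the orthogonal decomposition $v=v_P+v^\perp$, and both require $v$ to be an integral class in the $(-2)$-case --- you are right to flag that the statement's literal phrasing ``$v\in\Halg(X,\bZ)_\bR$'' in the second bullet must be read as restricting to lattice vectors (which is how the lemma is applied, namely to $v=v(E)$ in Proposition \ref{prop:support}).
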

\begin{proof}
This is essentially proved in \cite[Lemma 8.1]{bri08}. 
Note that 
\[
Z_{\alpha_0}(-)=\langle\Omega, - \rangle 
\]
holds, where we put $\Omega \coloneqq \exp(B_0+\alpha_0 \sqrt{-1} H)$.
By the first part of the proof of \textit{loc. cit.}, there exists a constant $C>0$ such that the inequality (\ref{eq:support}) holds for any $v \in \Halg(X, \bZ)_\bR$ with $v^2 \geq 0$. 
Furthermore, the second part of the proof of \textit{loc. cit.} also shows that the same inequality holds for a vector $v$ with $v^2=-2$ as long as it satisfies $\langle \Omega, v \rangle \neq 0$. 
\end{proof}

\begin{lem}\label{lem:JH}
Let $E \in \mcA_\mu$ be a $\sigma_{\alpha_0}$-stable object. 
Then there exists a filtration 
\[ 0=E_0 \subset E_1 \subset \cdots \subset E_n=E \]
in $\mcA_\mu$ such that for any $1 \leq i \leq n$, the quotient object $A_i\coloneqq E_i/E_{i-1}$ is $Z_{\alpha_0}$-stable  with $\phi_{\alpha_0}(A_i)=\phi_{\alpha_0}(E)$. 
\end{lem}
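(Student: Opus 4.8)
The plan is to produce the filtration as a Jordan--Hölder type composition series inside the category of $Z_{\alpha_0}$-semistable objects of a fixed phase. Write $\phi \coloneqq \phi_{\alpha_0}(E)$. Since $E$ is $\sigma_{\alpha_0}$-stable it lies in $\mcP_{\alpha_0}(\phi)$, hence is $Z_{\alpha_0}$-semistable of phase $\phi$. First I would record the closure property that makes an inductive construction possible: if $0 \to F \to G \to H \to 0$ is exact in $\mcA_\mu$ with $G$ being $Z_{\alpha_0}$-semistable of phase $\phi$ and $\phi_{\alpha_0}(F)=\phi$, then the weak see-saw property forces $\phi_{\alpha_0}(H)=\phi$ and shows that $F$ and $H$ are again $Z_{\alpha_0}$-semistable of phase $\phi$ (a subobject of $F$ is a subobject of $G$, hence has phase $\le\phi$, and similarly for $H$). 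In particular, among the nonzero phase-$\phi$ subobjects of $E$, one of minimal mass is automatically $Z_{\alpha_0}$-stable: any proper phase-$\phi$ subobject would have strictly smaller mass by additivity of masses along a collinear filtration.

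With this in hand the construction is the evident greedy one. I would choose $E_1 \subset E$ to be a nonzero phase-$\phi$ subobject of minimal mass, so that $A_1 \coloneqq E_1$ is $Z_{\alpha_0}$-stable and $E/E_1$ is again $Z_{\alpha_0}$-semistable of phase $\phi$; replacing $E$ by $E/E_1$ and iterating yields the filtration, provided the process stops after finitely many steps. Since all subquotients share the phase $\phi$, the central charges are collinear and $|Z_{\alpha_0}(E)| = \sum_i |Z_{\alpha_0}(A_i)|$. When $\phi \neq 1$ every $A_i$ is massive, and by the discreteness in Corollary \ref{cor:discrete} the nonzero values of $|Z_{\alpha_0}(-)|$ are bounded below by a positive constant; hence the number of steps is at most $|Z_{\alpha_0}(E)|$ divided by that constant and the process terminates. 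Noetherianity of $\mcA_\mu$ (used already in Lemma \ref{lem:HN}) together with this lower bound also guarantees that a minimal phase-$\phi$ subobject exists at each stage, so the greedy choice is well defined. For a genuinely stable $E$ this simply forces $n=1$, but I would run the argument uniformly.

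The hard part is the phase $\phi=1$ case, where the lax phenomenon enters: objects may be massless, so the mass need not strictly drop when a massless factor is split off and the naive mass induction breaks down. To handle this I would separate the massive and massless contributions. By Proposition \ref{prop:a>a_0}(3), Lemma \ref{lem:massless is semistable} and Remark \ref{rem:A is sigma-stable}, every massless phase-$1$ stable factor is isomorphic to $A[1]$, which is simple in $\mcP_{\alpha_0}(1)$ and hence $Z_{\alpha_0}$-stable; moreover Lemma \ref{lem:chain in mass less} guarantees that any descending chain of massless subobjects terminates, which bounds the number of massless factors that can be extracted. Combining this termination for the massless part with the discreteness bound of Corollary \ref{cor:discrete} for the massive factors (whose masses still sum to $|Z_{\alpha_0}(E)|$) shows that the total length is finite. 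The resulting filtration has $Z_{\alpha_0}$-stable quotients $A_i$, all of phase $1=\phi_{\alpha_0}(E)$, as required, and the same bookkeeping applies verbatim to any $\phi$ by treating the massless factors as vacuous when $\phi\neq 1$.
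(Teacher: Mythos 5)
Your argument is correct for the stated lemma, but it takes a genuinely different route from the paper's. The paper splits into two cases: when $\phi_{\alpha_0}(E)=1$ it notes that any short exact sequence in $\mcA_\mu$ with all terms of phase one lies in the abelian category $\mcP_{\alpha_0}(1)$, so $\sigma_{\alpha_0}$-stability forces $Z_{\alpha_0}$-stability outright and $n=1$; when $\Im Z_{\alpha_0}(E)\neq 0$ it argues top-down by induction on $\Im Z_{\alpha_0}(E)$ (discrete by Corollary \ref{cor:discrete}), splitting a strictly $Z_{\alpha_0}$-semistable object into two equal-phase pieces of strictly smaller positive imaginary part. You instead build the filtration bottom-up, peeling off at each stage a nonzero phase-$\phi$ subobject of minimal $|Z_{\alpha_0}|$ and bounding the number of steps by $|Z_{\alpha_0}(E)|$ divided by the least nonzero modulus in the lattice $Z_{\alpha_0}(\Halg(X,\bZ))$. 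Both proofs ultimately rest on the same discreteness statement, and your greedy version is a perfectly serviceable alternative; for $\phi\neq 1$ your verification that the minimal subobject is $Z_{\alpha_0}$-stable and that the quotient stays semistable of the same phase is sound. Two caveats. First, your uniform treatment of the phase-one case is more elaborate than needed and its citation of Lemma \ref{lem:chain in mass less} is slightly off target: that lemma controls chains of strict subobjects \emph{inside a massless object}, not the number of massless factors extractable from a massive one; the clean statement (which you do record in passing) is that a $\sigma_{\alpha_0}$-stable object of phase one is simple in the abelian, finite-length category $\mcP_{\alpha_0}(1)=\mcP_{\alpha}(1)$ and hence already $Z_{\alpha_0}$-stable, so $n=1$ and no massive/massless bookkeeping is required for the lemma as stated. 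Second, the existence of a minimal-charge subobject follows from discreteness alone (a nonempty subset of a closed discrete subset of $\bR_{\geq 0}$ attains its infimum); noetherianity of $\mcA_\mu$ is not the operative point there.
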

\begin{proof}
First, assume that $\phi_{\alpha_0}(E)=1$, i.e., $\Im Z_{\alpha_0}(E)=0$. Consider an exact sequence 
\[
0 \to F \to E \to G \to 0
\]
in $\mcA_\mu$ such that objects $F$ and $G$ are non-zero objects. Then we have $\phi_{\alpha_0}(F)=\phi_{\alpha_0}(G)=1$, i.e., it defines an exact sequence in $\mcP_{\alpha_0}(1)$. As we assume $E \in \mcP_{\alpha_0}(1)$ is $\sigma_{\alpha_0}$-stable, there is no such exact sequence. This proves that $E$ is already $Z_{\alpha_0}$-stable. 

Let us now assume that $\Im Z_{\alpha_0}(E)\neq 0$.
We prove the assertion by induction on $\Im Z_{\alpha_0}(E)$ 
(recall that  the image of $\Im Z_{\alpha_0}$ is discrete). 
We may assume that $E$ is strictly $Z_{\alpha_0}$-semistable. Then there exists an exact sequence 
\[
0 \to F \to E \to G \to 0
\]
such that $F \neq 0, G \neq 0$, and 
$\phi_{\alpha_0}(F)=\phi_{\alpha_0}(G)$. In particular, we have 
\[
0 < \Im Z_{\alpha_0}(F)<\Im Z_{\alpha_0}(E),~ 0<\Im Z_{\alpha_0}(G) < \Im Z_{\alpha_0}(E). 
\]
By the induction hypothesis, both $F$ and $G$ have the required property, hence so does $E$. 
\end{proof}

Now we are ready to prove the support property: 
\begin{prop} \label{prop:support}
The lax pre-stability condition $\sigma_{\alpha_0}$ satisfies a support property, i.e., 
there exists a constant $C>0$ such that for any massive $\sigma_{\alpha_0}$-stable object, we have 
\[
\|v(E)\| \leq C|Z_{\alpha_0}(E)|. 
\]
\end{prop}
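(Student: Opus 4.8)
The plan is to deduce the support property directly from the numerical estimate of Lemma \ref{lem:CS}, once we know that the Mukai vector of a $\sigma_{\alpha_0}$-stable object satisfies $v(E)^2 \geq -2$.

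First I would reduce to the heart. A $\sigma_{\alpha_0}$-stable object $E$ lies in some $\mcP_{\alpha_0}(\phi)$, and after replacing $E$ by a shift $E[\ell]$ we may assume $\phi \in (0,1]$, so that $E \in \mcA_\mu$ is $Z_{\alpha_0}$-stable. This reduction is harmless because the three quantities involved, namely $v(E)^2$, $\|v(E)\|$ and $|Z_{\alpha_0}(E)|$, are all invariant under shift: indeed $v(E[1]) = -v(E)$, while $\|{-v}\| = \|v\|$ for a norm and $|{-z}| = |z|$ for a complex number.

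Next, the key point is to bound $v(E)^2$ from below. Since $E$ is $Z_{\alpha_0}$-stable, Lemma \ref{lem:see-saw}(2) gives $\End(E) = \bC$; by Serre duality on the K3 surface $X$ we get $\Ext^2(E,E) \simeq \Hom(E,E)^* = \bC$, so that
\[
\chi(E,E) = \hom(E,E) - \ext^1(E,E) + \ext^2(E,E) = 2 - \ext^1(E,E) \leq 2.
\]
By Riemann--Roch, $v(E)^2 = \langle v(E), v(E)\rangle = -\chi(E,E) \geq -2$. As the Mukai pairing takes even values, this forces either $v(E)^2 \geq 0$ or $v(E)^2 = -2$.

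Finally I would invoke the massiveness hypothesis. As $E$ is $\sigma_{\alpha_0}$-semistable we have $m_{\sigma_{\alpha_0}}(E) = |Z_{\alpha_0}(E)|$, so being \emph{massive} means precisely $Z_{\alpha_0}(E) \neq 0$. In the case $v(E)^2 \geq 0$ the first bullet of Lemma \ref{lem:CS} applies, and in the case $v(E)^2 = -2$ the second bullet applies because $Z_{\alpha_0}(E) \neq 0$; in either case $\|v(E)\| \leq C|Z_{\alpha_0}(E)|$ with the constant $C$ of Lemma \ref{lem:CS}, which is exactly the asserted support property (one may absorb the strict inequality into the constant if desired). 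The only genuinely delicate step is the lower bound $v(E)^2 \geq -2$, and the subtle point there is confirming that a $\sigma_{\alpha_0}$-stable object, being a simple object of the quasi-abelian category $\mcP_{\alpha_0}(\phi)$, indeed has a one-dimensional endomorphism algebra so that Serre duality yields $\chi(E,E) \leq 2$; this is precisely what Lemma \ref{lem:see-saw}(2) supplies.
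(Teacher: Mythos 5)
Your overall architecture (shift into the heart, bound $v(E)^2\geq -2$ via Schur's lemma plus Serre duality and Riemann--Roch, observe that a massive semistable object has $Z_{\alpha_0}(E)\neq 0$, then invoke Lemma \ref{lem:CS}) coincides with the paper's, and those parts are correct. The gap is the sentence ``so that $E\in\mcA_\mu$ is $Z_{\alpha_0}$-stable.'' A $\sigma_{\alpha_0}$-stable object is by definition a simple object of the quasi-abelian category $\mcP_{\alpha_0}(\phi)$, i.e.\ it has no nontrivial \emph{strict} subobjects there, whereas $Z_{\alpha_0}$-stability is a condition on \emph{all} short exact sequences $0\to F\to E\to G\to 0$ in the heart $\mcA_\mu$. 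These notions are not interchangeable here, precisely because $Z_{\alpha_0}$ is degenerate: a destabilizing sequence with $\phi_{\alpha_0}(F)=\phi_{\alpha_0}(G)=\phi$ need not have $F,G\in\mcP_{\alpha_0}(\phi)$ (for instance $G$ may contain a copy of the massless object $A[1]$, which has phase $1$), so such a sequence need not be strict and does not contradict simplicity in $\mcP_{\alpha_0}(\phi)$. Since Schur's lemma can fail for simple objects of a quasi-abelian category, and since Lemma \ref{lem:see-saw}(2) is stated only for $Z_{\alpha_0}$-stable objects, your appeal to that lemma does not close the gap --- yet $\Hom(E,E)=\bC$ is exactly what your computation $\chi(E,E)\leq 2$ requires.

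The paper circumvents this by a different reduction: Lemma \ref{lem:JH} produces, for a $\sigma_{\alpha_0}$-stable $E\in\mcA_\mu$, a filtration whose factors $A_i$ are $Z_{\alpha_0}$-stable of the same phase $\phi_{\alpha_0}(E)$; one applies Lemma \ref{lem:CS} to each factor (each is massive when $\phi\neq 1$ since its phase is not an integer, while for $\phi=1$ the slice $\mcP_{\alpha_0}(1)$ is abelian and simple does coincide with $Z_{\alpha_0}$-stable) and sums, using that equal phases give $\sum_i|Z_{\alpha_0}(A_i)|=|Z_{\alpha_0}(E)|$ and $\|v(E)\|\leq\sum_i\|v(A_i)\|$. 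To repair your argument you must either prove the implication ``simple in $\mcP_{\alpha_0}(\phi)$ implies $Z_{\alpha_0}$-stable'' --- which appears to be true for $\phi\in(0,1)$, but only after an argument showing that any destabilizing sequence can be converted into a strict one by absorbing the massless top HN factor of $G$ into $F$ --- or simply route the proof through Lemma \ref{lem:JH} as the paper does.
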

\begin{proof}
By taking shifts, we may assume that $E \in \mcA_\mu$. 
By Lemma \ref{lem:JH}, we may assume that $E$ is $Z_{\alpha_0}$-stable and hence we have $\Hom(E, E)=\bC$ (see Lemma \ref{lem:see-saw}). 
By the Riemann--Roch formula and Serre duality, we have 
\[
-v^2=\chi(E, E) \leq \hom(E, E)+\ext^2(E, E)=2. 
\]
Furthermore, since we assume that $E$ is massive, we have $Z_{\alpha_0}(v) \neq 0$. 
Now, the assertion holds by Lemma \ref{lem:CS}. 
\end{proof}

\begin{rmk}
As we will see in Proposition \ref{prop:delta-lax support property}, the lax stability condition $\sigma_{\alpha_0}$ satisfies a stronger version of the support property. 
\end{rmk}

\begin{lem} \label{lem:limit}
In the generalized metric space $\Stab^l(D^b(X))$, we have
\[
\lim_{\alpha\to\alpha_0+0}\sigma_\alpha=\sigma_{\alpha_0}.
\]
\end{lem}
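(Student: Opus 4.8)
The generalized metric on $\Stab^l(D^b(X))$ decomposes as
\[
d(\sigma_\alpha,\sigma_{\alpha_0})=\max\bigl\{\|Z_\alpha-Z_{\alpha_0}\|,\ d(\mcP_\alpha,\mcP_{\alpha_0})\bigr\},
\]
so the plan is to control the two terms separately. The convergence $\|Z_\alpha-Z_{\alpha_0}\|\to 0$ is immediate: by Lemma \ref{lem:computation of central charge} each $Z_\alpha(v)$ is a degree-two polynomial in $\alpha$ whose coefficients depend only on $v$, so $Z_\alpha\to Z_{\alpha_0}$ pointwise on a basis of the finitely generated group $\Halg(X,\bZ)$, hence in operator norm. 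All the work lies in $d(\mcP_\alpha,\mcP_{\alpha_0})\to 0$.

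The crucial structural fact is that every slicing in sight shares the common heart $\mcP_\alpha((0,1])=\mcP_{\alpha_0}((0,1])=\mcA_\mu$ (by the construction via Remark \ref{rem:slicing and heart}). Hence the associated bounded t-structure, and in particular the cohomology functor $H^i(-)$ with values in $\mcA_\mu$, is the same for all $\sigma_\alpha$ and for $\sigma_{\alpha_0}$. For any slicing with heart $\mcA_\mu$ one has $\phi^+(E)=\max_i\bigl(\phi^+(H^i(E))-i\bigr)$ and $\phi^-(E)=\min_i\bigl(\phi^-(H^i(E))-i\bigr)$; combined with $|\max_i a_i-\max_i b_i|\le\max_i|a_i-b_i|$ this reduces the problem to objects of the heart:
\[
d(\mcP_\alpha,\mcP_{\alpha_0})\le \sup_{0\neq G\in\mcA_\mu}\max\bigl\{|\phi^+_\alpha(G)-\phi^+_{\alpha_0}(G)|,\ |\phi^-_\alpha(G)-\phi^-_{\alpha_0}(G)|\bigr\}.
\]
To bound the right-hand side I would use the variational description of $\phi^\pm$ inside the abelian heart: $\phi^+_\bullet(G)$ is realised by the maximal destabilising subobject $A_1\subseteq G$, which is semistable and satisfies $\pi\phi^+_\bullet(G)=\arg Z_\bullet(A_1)$, and dually $\phi^-_\bullet(G)$ by the minimal destabilising quotient. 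Thus it suffices to establish a \emph{uniform} estimate
\[
|\arg Z_\alpha(F)-\arg Z_{\alpha_0}(F)|\le \pi\eta(\alpha),\qquad \eta(\alpha)\to 0\ (\alpha\to\alpha_0+0),
\]
for every $F\in\mcA_\mu$ that is $\sigma_{\alpha_0}$-semistable or $\sigma_\alpha$-semistable. Testing $A_1\subseteq G$ for one slicing against $\phi^+$ of the other (using $\arg Z(A_1)\le \pi\phi^+(A_1)\le\pi\phi^+(G)$ for subobjects) then gives $|\phi^+_\alpha(G)-\phi^+_{\alpha_0}(G)|\le\eta(\alpha)$ in both directions, and symmetrically for $\phi^-$ via quotients.

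The uniform estimate is where the support property does the work. Writing $v(F)=(r,D,s)$, Lemma \ref{lem:computation of central charge} gives
\[
Z_\alpha(F)-Z_{\alpha_0}(F)=rd(\alpha^2-\alpha_0^2)+\sqrt{-1}\,(\alpha-\alpha_0)(D-rB_0)H,
\]
hence $|Z_\alpha(F)-Z_{\alpha_0}(F)|\le |r|\,d|\alpha^2-\alpha_0^2|+|\alpha-\alpha_0|\cdot|(D-rB_0)H|$. Since $|(D-rB_0)H|=|\Im Z_{\alpha_0}(F)|/\alpha_0\le|Z_{\alpha_0}(F)|/\alpha_0$ and $|r|\le\|v(F)\|$, the support property $\|v(F)\|\le C|Z_{\alpha_0}(F)|$ for massive $\sigma_{\alpha_0}$-semistable objects (Proposition \ref{prop:support}, extended from stable to semistable objects via their equal-phase stable factors) yields $|Z_\alpha(F)-Z_{\alpha_0}(F)|\le\epsilon(\alpha)\,|Z_{\alpha_0}(F)|$ with $\epsilon(\alpha)=Cd|\alpha^2-\alpha_0^2|+|\alpha-\alpha_0|/\alpha_0\to 0$, so that $|\arg Z_\alpha(F)-\arg Z_{\alpha_0}(F)|\le\arcsin\epsilon(\alpha)$ (both values lie in $\bH\cup\bR_{\le 0}$, where $\arg$ is continuous). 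For $\sigma_\alpha$-semistable $F$ the same computation, with the support property of the genuine stability condition $\sigma_\alpha$, bounds the ratio by $|Z_\alpha(F)|$ instead. Massless objects cause no trouble: a massless semistable object of $\mcA_\mu$ is $A[1]^{\oplus N}$ by Lemma \ref{lem:massless is semistable}, and there $Z_{\alpha_0}=0$ while $Z_\alpha(A[1]^{\oplus N})=-N Z_\alpha(A)\in\bR_{<0}$, so $\arg Z_\alpha=\arg Z_{\alpha_0}=\pi$ and the estimate holds with equality. Putting the two terms together gives $d(\sigma_\alpha,\sigma_{\alpha_0})\to 0$.

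I expect the main obstacle to be the uniformity of the support estimate across \emph{both} families of semistable objects as $\alpha\to\alpha_0+0$ --- in particular producing a support constant for the genuine conditions $\sigma_\alpha$ that does not blow up, which can be arranged by taking the constant of \cite[Lemma 8.1]{bri08} uniformly over a small interval $(\alpha_0,\alpha_1]$ --- together with a careful check that the variational characterisation of $\phi^\pm$ and the reduction to the heart remain valid in the quasi-abelian/lax setting. The remaining ingredients are the purely numerical facts about $Z_\alpha$ already recorded in Lemma \ref{lem:computation of central charge} and Proposition \ref{prop:a>a_0}.
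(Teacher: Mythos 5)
Your overall strategy (split the generalized metric, dispose of the central charges by Lemma \ref{lem:computation of central charge}, and control the slicings by comparing phases of semistable objects via a support-type estimate combined with the explicit formula for $Z_\alpha-Z_{\alpha_0}$) matches the paper's, and your treatment of the massless objects $A[1]^{\oplus N}$ is correct. But there is a genuine gap in the step where you need the phase estimate for $\sigma_\alpha$-semistable objects. You propose to obtain it from the support property of the genuine stability conditions $\sigma_\alpha$ with a constant uniform over a small interval $(\alpha_0,\alpha_1]$; no such uniform constant exists. The spherical bundle $A$ itself (equivalently $A[1]\in\mcA_\mu$) is $\sigma_\alpha$-stable for $\alpha$ close to $\alpha_0$, and $|Z_\alpha(A)|=r_0 d(\alpha^2-\alpha_0^2)\to 0$ while $\|v(A)\|=\|\delta_0\|$ is fixed, so every support constant for $\sigma_\alpha$ blows up as $\alpha\to\alpha_0+0$. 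A second, smaller inaccuracy: Proposition \ref{prop:support} does not extend to massive $\sigma_{\alpha_0}$-\emph{semistable} objects ``via their equal-phase stable factors'' --- for $F=A[1]^{\oplus N}\oplus E$ with $E$ massive stable of phase $1$, the norm $\|v(F)\|$ grows with $N$ while $|Z_{\alpha_0}(F)|=|Z_{\alpha_0}(E)|$ stays constant. (The phase conclusion for such $F$ is still true, but it must be argued at the level of the arguments of the stable factors, not via a norm inequality for $F$ itself.)

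Both defects are repairable, and the paper's proof shows how to avoid the first one entirely: instead of your symmetric two-sided comparison of $\phi^{\pm}_\alpha$ and $\phi^{\pm}_{\alpha_0}$, it invokes \cite[Lemma 6.1]{bri}, by which it suffices to prove the one-sided containment $\mcP_\alpha(\phi)\subset\mcP_{\alpha_0}((\phi-\epsilon,\phi+\epsilon))$. For $F\in\mcP_\alpha(\phi)$ one takes the $\sigma_{\alpha_0}$-Harder--Narasimhan filtration of $F$, refines its extreme factors into $\sigma_{\alpha_0}$-\emph{stable} pieces, applies the phase estimate only to those stable pieces (where Proposition \ref{prop:support}, fed through \cite[Proposition 4.11]{bppw}, legitimately applies), and then uses that $F$ is $\sigma_\alpha$-semistable to compare $\phi$ with $\phi_\alpha$ of its $\sigma_{\alpha_0}$-HN subobject and quotient. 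No estimate for $\sigma_\alpha$-semistable objects is ever needed. If you insist on your two-sided route, the correct repair is to apply Lemma \ref{lem:CS} with respect to the \emph{fixed} form $Z_{\alpha_0}$ to each $\sigma_\alpha$-stable factor $F$: such $F$ satisfies $\Hom(F,F)=\bC$, hence $v(F)^2\ge -2$, and either $F\cong A[1]$ (where both arguments equal $\pi$) or $Z_{\alpha_0}(F)\neq 0$, so that $\|v(F)\|\le C|Z_{\alpha_0}(F)|$ with the constant of Lemma \ref{lem:CS}; this is an extra idea that your proposal, as written, does not contain.
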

\begin{proof}
By (\ref{eq:metric}), it is enough to show that 
\[ \lim_{\alpha \to \alpha_0+0}\|Z_\alpha-Z_{\alpha_0} \|=0,~\lim_{\alpha \to \alpha_0+0}d(\mcP_\alpha, \mcP_{\alpha_0})=0. \]
In fact, the first equality is deduced from (\ref{eq:operator norm}) and Lemma \ref{lem:computation of central charge}.

Next, we prove the second equality.
By {\cite[Lemma 6.1]{bri}}, it is sufficient to show that 
for any $\epsilon>0$, 
there exists $\delta>0$ such that  $0<\alpha-\alpha_0<\delta$ implies
\[
\mcP_\alpha(\phi)\subset\mcP_{\alpha_0}([\phi-\epsilon, \phi+\epsilon])
\text{ for any $\phi\in(0,1]$}. 
\]
By Proposition \ref{prop:support} and \cite[Proposition 4.11]{bppw}, 
we also have 
\begin{equation}\label{eq:semi-norm}
\lim_{\alpha\to\alpha_0+0}||Z_\alpha-Z_{\alpha_0}||_{\sigma_{\alpha_0}}=0,
\end{equation}
where we define
\[
||W||_{\sigma_{\alpha_0}}
\coloneqq \sup
\left\{
\frac{|W(E)|}{|Z_{\alpha_0}(E)|}
 : 
\text{massive stable }E\in\mcP_{\alpha_0}(\phi),~\phi\in\bR
\right\}
\]
for a group homomorphism $W \colon \Halg(X, \bZ) \to \bC$. 

For simplicity, we write 
\[\phi_\alpha(E)= \frac{1}{\pi}\arg Z_\alpha(E)\]
for a non-zero object $E\in\mcA_\mu$. 
Note that $\phi_\alpha(E)=\phi_{\alpha_0}(E)=1$ holds for any massless $\sigma_{\alpha_0}$-stable object $E$ in $\mcA_\mu$ by Lemma \ref{lem:computation of central charge}.

Take any $\epsilon>0$. By (\ref{eq:semi-norm}), there exists $\delta>0$ such that $0< \alpha-\alpha_0<\delta$ implies $\|Z_\alpha-Z_{\alpha_0} \|_{\sigma_{\alpha_0}}<\sin(\pi \epsilon)$. 
As in \cite[Remark 4.50]{Huybrechts_2014}, we obtain 
\[ |\phi_\alpha(E)-\phi_{\alpha_0}(E)|<\epsilon \]
for any $\sigma_{\alpha_0}$-stable object $E \in \mcA_\mu$.

Take $\phi \in (0,1]$ and a non-zero object $F \in \mcP_\alpha(\phi)$. Consider the 
Harder--Narasimhan filtration of $F$ with respect to $\sigma_{\alpha_0}$:
\[0=F_0 \subset F_1 \subset F_2 \subset \cdots \subset F_{n-1} \subset F_n=F \]
with the $\sigma_{\alpha_0}$-semistable factor $A_i\coloneqq F_i/F_{i-1}$ of phase $\phi_i$ for $1 \leq i \leq n$. By Proposition \ref{prop:locfin}, there is a strict filtration  
\[ 0=C_0 \subset C_1 \subset C_2 \subset \cdots \subset C_{k-1} \subset C_k=A_n \]
of $A_n$ in $\mcP_{\alpha_0}(\phi_n)$ such that the strict quotient $C_i/C_{i-1}$ is $\sigma_{\alpha_0}$-stable for $1 \leq i \leq k$. For each $i$, we have
\[ |\phi_\alpha(C_i/C_{i-1})-\phi_{\alpha_0}(A_n)|=|\phi_\alpha(C_i/C_{i-1})-\phi_{\alpha_0}(C_i/C_{i-1})|<\epsilon. \]
Since $Z_\alpha(A_n)=\sum_{i=1}^{k}Z_\alpha(C_i/C_{i-1})$ holds, we obtain 
\[  |\phi_\alpha(A_n) - \phi_{\alpha_0}(A_n)|<\epsilon. \]
By the definition of $A_n$, we have a surjection $F \twoheadrightarrow A_n$ in $\mcA_\mu$. Since $F$ is $\sigma_\alpha$-semistable, we have
\[ \phi=\phi_\alpha(F) \leq \phi_\alpha(A_n), \]
hence $\phi-\epsilon<\phi_{\alpha_0}(A_n)$.
Similar computations imply $\phi_{\alpha_0}(A_1)<\phi+\epsilon$, 
thus we have $F\in\mcP_{\alpha_0}((\phi-\epsilon, \phi+\epsilon))$, 
which completes the proof. 
\end{proof}

Let $\Stab^L(X)$ denote the connected component of the space $\Stab^L(D^b(X))$ containing $\Stab^*(X)$. 

\begin{thm}\label{thm:laxstab}
We have $\sigma_{\alpha_0}=(Z_{\alpha_0}, \mcP_{\alpha_0}) \in \Stab^L(X)\setminus \Stab^*(X)$.
In particular, $\bP m (\sigma_{\alpha_0})$ lies on the boundary of the closure of $\bP m (\Stab^*(X)/\bC)$. 
\end{thm}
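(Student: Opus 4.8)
The plan is to verify the two memberships $\sigma_{\alpha_0}\in\Stab^L(X)$ and $\sigma_{\alpha_0}\notin\Stab^*(X)$ separately, and then to deduce the boundary statement by comparing the masses of the spherical bundle $A$ under $\sigma_{\alpha_0}$ and under genuine stability conditions. First I would check that $\sigma_{\alpha_0}$ belongs to $\Stab^L(X)$. By Propositions~\ref{prop:locfin} and~\ref{prop:support}, the pair $\sigma_{\alpha_0}=(Z_{\alpha_0},\mcP_{\alpha_0})$ is a lax pre-stability condition satisfying the support property, so $\sigma_{\alpha_0}\in\Stab^l(D^b(X))$. By Proposition~\ref{prop:a>a_0}(1) we have $\sigma_\alpha\in\Stab^*(X)\subset\Stab(D^b(X))$ for every $\alpha>\alpha_0$, and by Lemma~\ref{lem:limit} we have $\sigma_\alpha\to\sigma_{\alpha_0}$ in $\Stab^l(D^b(X))$ as $\alpha\to\alpha_0+0$. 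Hence $\sigma_{\alpha_0}\in\overline{\Stab(D^b(X))}$, and combined with the previous sentence this gives $\sigma_{\alpha_0}\in\Stab^L(D^b(X))$. Moreover the assignment $(\alpha_0,\infty)\ni\alpha\mapsto\sigma_\alpha$, together with its limit $\sigma_{\alpha_0}$, is a continuous path in $\Stab^L(D^b(X))$ connecting $\sigma_{\alpha_0}$ to points of $\Stab^*(X)$; therefore $\sigma_{\alpha_0}$ lies in the connected component $\Stab^L(X)$ of $\Stab^L(D^b(X))$ containing $\Stab^*(X)$.

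Next I would show $\sigma_{\alpha_0}\notin\Stab^*(X)$. The point is that $\sigma_{\alpha_0}$ is not even a pre-stability condition: by Remark~\ref{rem:A is sigma-stable} the object $A[1]$ is a nonzero $\sigma_{\alpha_0}$-stable object of phase one, i.e.\ $0\neq A[1]\in\mcP_{\alpha_0}(1)$, while Proposition~\ref{prop:a>a_0}(3) gives $Z_{\alpha_0}(A[1])=0\notin\bR_{>0}\exp(\sqrt{-1}\pi)$. This violates the defining positivity condition of Definition~\ref{def:pre-stability}, so $\sigma_{\alpha_0}\notin\Stab(D^b(X))\supset\Stab^*(X)$. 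Together with the first paragraph this establishes $\sigma_{\alpha_0}\in\Stab^L(X)\setminus\Stab^*(X)$.

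Finally I would prove the boundary statement. Since $Z_{\alpha_0}\neq0$ by Lemma~\ref{lem:computation of central charge}, we have $\sigma_{\alpha_0}\in\Stab^L(D^b(X))^\star$, so $\bP m(\sigma_{\alpha_0})$ is defined; as $\bP m$ is continuous on $\Stab^L(D^b(X))^\star/\bC$ and $\bP m(\sigma_\alpha)\in\bP m(\Stab^*(X)/\bC)$ for $\alpha>\alpha_0$, Lemma~\ref{lem:limit} gives $\bP m(\sigma_{\alpha_0})=\lim_{\alpha\to\alpha_0+0}\bP m(\sigma_\alpha)\in M$, the closure of $\bP m(\Stab^*(X)/\bC)$. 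It remains to see that this point does not itself lie in $\bP m(\Stab^*(X)/\bC)$. Here I would use that $A$ is massless for $\sigma_{\alpha_0}$: since $A[1]$ is $\sigma_{\alpha_0}$-stable with $Z_{\alpha_0}(A[1])=0$, the inequality~(\ref{eq:triangle inequality}) gives $m_{\sigma_{\alpha_0}}(A)=m_{\sigma_{\alpha_0}}(A[1])=0$, so the $A$-coordinate of $\bP m(\sigma_{\alpha_0})$ vanishes. On the other hand, for any genuine $\sigma\in\Stab^*(X)$ each Harder--Narasimhan factor of a nonzero object has nonzero central charge by Definition~\ref{def:pre-stability}, whence $m_\sigma(A)>0$ and the $A$-coordinate of $\bP m([\sigma])$ is strictly positive. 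These two points are therefore distinct in $\bP^\mcS_{\geq0}$, since a positive rescaling cannot turn a strictly positive coordinate into a vanishing one; hence $\bP m(\sigma_{\alpha_0})\in M\setminus\bP m(\Stab^*(X)/\bC)$ lies on the boundary of $M$. The main obstacle is the bookkeeping in the first paragraph, namely confirming that the single limit of Lemma~\ref{lem:limit} simultaneously places $\sigma_{\alpha_0}$ in $\overline{\Stab(D^b(X))}$ and in the correct connected component $\Stab^L(X)$; by contrast the boundary statement itself reduces to the clean observation that $A$ is massless for $\sigma_{\alpha_0}$, a feature that no honest stability condition on $\Stab^*(X)$ can share.
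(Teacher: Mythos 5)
Your proposal is correct and follows essentially the same route as the paper, which simply assembles Propositions \ref{prop:a>a_0}, \ref{prop:locfin}, \ref{prop:support} and Lemmas \ref{lem:see-saw}, \ref{lem:HN}, \ref{lem:limit} for the first assertion and invokes continuity of $\bP m$ for the second; your treatment of the boundary claim (masslessness of $A$ versus strict positivity of all masses on $\Stab^*(X)$) is in fact slightly more explicit than the paper's. One cosmetic point: $m_{\sigma_{\alpha_0}}(A[1])=0$ follows directly from Definition \ref{def:mass} because $A[1]$ is $\sigma_{\alpha_0}$-semistable and hence its own unique Harder--Narasimhan factor, not from the inequality (\ref{eq:triangle inequality}), which only gives a lower bound.
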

\begin{proof}
The first assertion follows from Proposition \ref{prop:a>a_0}, Lemma \ref{lem:see-saw}, Lemma \ref{lem:HN}, Proposition \ref{prop:locfin}, Proposition \ref{prop:support}, and Lemma \ref{lem:limit}. 
The second assertion then follows from the continuity of the map $\bP m$.
\end{proof}

Note that the sequence yields a normal deformation of $\sigma_{\alpha_0}$ in the sense of \cite[\S 6.1]{bppw}. 

\begin{rmk}
The massless subcategory of $\sigma_{\alpha_0}$ is precisely $\langle A\rangle$ of rank one in the sense of \cite{bppw}. 
\end{rmk}

\subsection{Comparing points on the boundary}
We keep the same setting as in the previous subsection.
In this subsection, we prove the following proposition: 

\begin{prop}\label{prop:laxvshom}
Let $A$ be a spherical object on $X$. Assume that $A$ is $\sigma$-semistable for some stability condition $\sigma$ on $D^b(X)$. 
Then we have
\[ \bP m([\sigma_{\alpha_0}]) \neq h_A \]
in the infinite dimensional projective space $\bP^{\mcS}_{\geq 0}$.
\end{prop}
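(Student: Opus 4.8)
The plan is to use the reformulation that $\bP m([\sigma_{\alpha_0}]) = h_A$ holds in $\bP^{\mcS}_{\geq 0}$ if and only if there is a constant $c > 0$ with $m_{\sigma_{\alpha_0}}(E) = c\,\overline{\hom}(A, E)$ for every $E \in \mcS$. I would assume such a $c$ exists and derive a contradiction. The case $m_{\sigma_{\alpha_0}}(A) \neq 0$ is immediate: taking $E = A$ gives $m_{\sigma_{\alpha_0}}(A) = c\,\overline{\hom}(A, A) = 0$, since $\overline{\hom}(A, A) = 0$ by definition, which is absurd. So I may assume $m_{\sigma_{\alpha_0}}(A) = 0$; by Proposition \ref{prop:a>a_0}(3) and Lemma \ref{lem:massless is semistable} a massless spherical object is a shift of the bundle constructed in Subsection \ref{sec:constructlax}, and since replacing $A$ by a shift changes neither side, I may assume $v(A) = \delta_0$.

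In this remaining case both functionals vanish on precisely the shifts of $A$ in $\mcS$ (at least when $\rho(X) = 1$), so comparing their vanishing loci is inconclusive and I would instead exploit an arithmetic discrepancy between their nonzero values. The two key features are that $\overline{\hom}(A, F) \in \bZ_{\geq 0}$ is always an integer, whereas for a $\sigma_{\alpha_0}$-semistable spherical object $F$ one has $m_{\sigma_{\alpha_0}}(F) = |Z_{\alpha_0}(v(F))|$, and by Corollary \ref{cor:discrete} these moduli satisfy $|Z_{\alpha_0}(v)|^2 = \frac{1}{r_0^4 d}(M^2 d + N^2)$ for integers $M, N$ depending on $v$. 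I would first record a lax analogue of Lemma \ref{lem:sphexist}: every spherical class admits a $\sigma_{\alpha_0}$-semistable representative, obtained by specialising $\sigma_\alpha \to \sigma_{\alpha_0}$ via Lemma \ref{lem:limit} and using the existence of $\sigma_\alpha$-stable spherical objects for $\alpha > \alpha_0$. Exactly as in Corollary \ref{cor:massmin} this yields $\min_{F \in \mcS_v} m_{\sigma_{\alpha_0}}(F) = |Z_{\alpha_0}(v)|$. Applying the assumed proportionality to a minimiser then forces $|Z_{\alpha_0}(v)|/c \in \bZ_{\geq 0}$ for every spherical class $v$; in particular the ratio $|Z_{\alpha_0}(v_1)|/|Z_{\alpha_0}(v_2)|$ of any two spherical classes must be rational.

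To conclude I would exhibit two spherical classes violating this rationality. The line bundle classes $v(\mcO_X(nH)) = (1, nH, dn^2 + 1)$ are convenient: a direct computation with Lemma \ref{lem:computation of central charge}, writing $t = r_0 n - d_0$, gives the factorisation $|Z_{\alpha_0}(v(\mcO_X(nH)))|^2 = \frac{1}{r_0^4}\bigl(dt^2 + (r_0 - 1)^2\bigr)\bigl(dt^2 + (r_0 + 1)^2\bigr)$. Since the two quadratic factors are distinct, the quartic in $t$ is not a constant multiple of a perfect square, so the values $|Z_{\alpha_0}(v(\mcO_X(nH)))|$ cannot all be rational multiples of a single real number; concretely, already for $r_0 = 1$ the values at $t = 1$ and $t = 2$ have ratio $\tfrac14\sqrt{(d+4)/(d+1)}$, which is irrational for every $d \geq 1$. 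This contradicts the rationality forced in the previous step, finishing the proof. I expect this last step --- the uniform verification of irrationality over all $(X, H, \delta_0)$ --- to be the main obstacle. When $\rho(X) \geq 2$ it can be bypassed: $\delta_0^{\perp}$ is then an even lattice of signature $(2, \rho(X) - 1)$, hence contains a spherical class $w$, and a spherical object $E$ with $v(E) = w$ realised of the same phase as $A$ satisfies $\RHom(A, E) = 0$, so that $\overline{\hom}(A, E) = 0 < m_{\sigma_{\alpha_0}}(E)$ and non-proportionality is immediate.
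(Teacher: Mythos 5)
Your overall strategy coincides with the paper's: assume $m_{\sigma_{\alpha_0}}=c\,\overline{\hom}(A,-)$, deduce that ratios of masses of semistable spherical objects must be rational, and then exhibit spherical classes whose mass ratio is irrational. Your preliminary reductions are sound (the instant disposal of the case $m_{\sigma_{\alpha_0}}(A)\neq 0$ via $\overline{\hom}(A,A)=0$ is a nice touch the paper does not even need, since its argument is uniform in $A$), and your lax analogue of Lemma \ref{lem:sphexist} is exactly the paper's Lemma \ref{lem:laxnonempty}. The genuine gap is the endgame, which you yourself flag as "the main obstacle": your family $v(\mcO_X(nH))$ only lets $t=r_0n-d_0$ range over a single congruence class modulo $r_0$, and your concrete irrationality check covers only $r_0=1$; the assertion that a quartic which is not a constant times a perfect square cannot take values that are all rational multiples of squares of a fixed real number is not a valid inference (a non-square polynomial can take square values at integers). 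The paper instead uses the classes $\delta_\ell=e^{\ell H}\cdot\delta_0$, which exist for every $\ell\in\bZ$, every $r_0$, and every Picard rank, computes $m_{\sigma_{\alpha_0}}(E_\ell)=|\ell|\sqrt{d}\sqrt{r_0^2d\ell^2+4}$, and proves irrationality of a suitable ratio by a dedicated number-theoretic lemma (Lemma \ref{lem:prime number}): using quadratic reciprocity, the Chinese remainder theorem and Dirichlet's theorem it produces a prime $p$ with $p\nmid f(\ell_0)$ and $p\,\|\,f(\ell_1)$ for $f(x)=r_0^2dx^2+4$, so that $\sqrt{f(\ell_1)/f(\ell_0)}\notin\bQ$. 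This is precisely the missing ingredient in your argument.

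Your proposed bypass for $\rho(X)\geq 2$ does not close the gap either. First, an even lattice of signature $(2,\rho(X)-1)$ need not represent $-2$: for instance $\langle 2\rangle\oplus\langle 2\rangle\oplus\langle -8\rangle$ contains no $(-2)$-vector, since $x^2+y^2=4z^2-1\equiv 3\pmod 4$ is insoluble; so the existence of a spherical class $w\in\delta_0^{\perp}$ requires a genuine argument about the specific lattice $\delta_0^{\perp}\subset\Halg(X,\bZ)$, not just its signature. Second, even granting such a $w$, orthogonality to $\delta_0$ does not force $\Im Z_{\alpha_0}(w)=0$ (that would require $w\perp\Im\Omega$, not $w\perp\delta_0$), so there is no reason a semistable representative $E$ of $w$ has the same phase as $A$, and without that the deduction $\RHom(A,E)=0$ from $\chi(A,E)=0$ collapses. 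As written, the proof is therefore incomplete for $r_0>1$ in Picard rank one and unsound in higher Picard rank; the paper's $\delta_\ell$ construction together with Lemma \ref{lem:prime number} is what handles all cases uniformly.
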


We first prepare two lemmas: 
\begin{lem}[{\cite[Lemma 6.24]{ms17}}] \label{lem:finitewall}
For any spherical class $\delta \in \Delta(X)$, 
there are only finitely many walls with respect to $\delta$ which intersect with the line $\{\sigma_\alpha\}_{\alpha > \alpha_0}$. 
\end{lem}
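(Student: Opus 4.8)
The plan is to reduce the statement to a boundedness property of the Mukai vectors of the possible destabilizing factors, and then to an elementary count. First I would recall that a wall for $\delta$ on the line $\{\sigma_\alpha\}_{\alpha>\alpha_0}$ is a value $\bar\alpha$ at which there is a $\sigma_{\bar\alpha}$-semistable object $E$ with $v(E)=\delta$ admitting a Jordan--H\"older (or Harder--Narasimhan) factor $F$ of the same phase with $w\coloneqq v(F)\notin\bR\delta$. Numerically such a $\bar\alpha$ is detected by the condition $Z_{\bar\alpha}(w)\in\bR_{>0}Z_{\bar\alpha}(\delta)$, equivalently $\Im\big(\overline{Z_{\bar\alpha}(\delta)}\,Z_{\bar\alpha}(w)\big)=0$. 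Thus it suffices to prove two things: (i) only finitely many classes $w$ can occur, and (ii) for each such $w$, the equation $\Im\big(\overline{Z_\alpha(\delta)}Z_\alpha(w)\big)=0$ has only finitely many solutions $\alpha>\alpha_0$.

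Step (ii) is the easy part. By Lemma \ref{lem:computation of central charge}, $\Re Z_\alpha(v)$ is a quadratic polynomial in $\alpha$ and $\Im Z_\alpha(v)$ is $\alpha$ times a constant, so $\Im\big(\overline{Z_\alpha(\delta)}Z_\alpha(w)\big)$ is a polynomial in $\alpha$. It is not identically zero: if it vanished for all $\alpha$, then $w$ and $\delta$ would have the same phase everywhere on the ray, so $\delta$ could never be represented by a $\sigma_\alpha$-stable object, contradicting Lemma \ref{lem:sphexist}. A nonzero polynomial has finitely many roots, which proves (ii).

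The heart of the argument is (i). Write $\delta=(r,D,s)$ and $w=(r_w,D_w,s_w)$, set $\Omega_\alpha\coloneqq\exp(B_0+\sqrt{-1}\alpha H)$ so that $Z_\alpha=\langle\Omega_\alpha,-\rangle$, and let $P_\alpha\subset\Halg(X,\bZ)_\bR$ be the plane spanned by $\Re\Omega_\alpha$ and $\Im\Omega_\alpha$. Since $\pi(\sigma_\alpha)\in\mcP^+(X)$, this plane is positive definite, its orthogonal complement $P_\alpha^\perp=\ker(Z_\alpha\otimes\bR)$ is negative definite, and $|Z_\alpha(v)|$ depends only on the projection of $v$ to $P_\alpha$. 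At a wall $\bar\alpha$ the factor class satisfies $w^2\ge-2$ (for a stable factor $F$ this follows from $\Hom(F,F)=\bC$ and Serre duality, exactly as in the proof of Proposition \ref{prop:support}), while matching phases forces $Z_{\bar\alpha}(w)=t\,Z_{\bar\alpha}(\delta)$ with $t\in(0,1]$; hence the projections to $P_{\bar\alpha}$ obey $w_{P}=t\,\delta_{P}$. Decomposing $w=w_P+w_\perp$ orthogonally and using $w_P^2=t^2\delta_P^2$ gives $w_\perp^2=w^2-t^2\delta_P^2\ge-2-\delta_P^2$, so that $-2-\delta_P^2\le w_\perp^2\le0$ pins $w_\perp$ in a compact subset of the negative-definite space, while $w_P=t\,\delta_P$ with $t\in[0,1]$ is also bounded. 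Therefore $w$ lies in a compact region $K_{\bar\alpha}$, and since $\delta_{P_{\bar\alpha}}^2$ varies continuously in $\bar\alpha$, the union of the $K_{\bar\alpha}$ over any compact subinterval of $(\alpha_0,\infty)$ is bounded; this yields only finitely many lattice classes $w$, hence local finiteness of walls.

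The main obstacle is upgrading this local finiteness to global finiteness over the non-compact ray, i.e. excluding accumulation of walls at the two ends. For $\alpha\to+\infty$ I would argue via the large-volume limit: the quantity $(D_w-r_wB_0)H=\Im Z_\alpha(w)/\alpha$ is $\alpha$-independent and, by the phase bound $t\le 1$, confined to the finite set $[0,(D-rB_0)H]\cap\frac1{r_0}\bZ$; matching real parts on a common ray as $\alpha\to\infty$ then forces $w\in\bR\delta$, so no genuine wall survives for $\alpha\gg0$. Near the other end, the boundedness argument of Step (i) extends down to $\bar\alpha\to\alpha_0^+$ because $\sigma_{\alpha_0}$ itself satisfies the support property (Proposition \ref{prop:support}) and is the limit of the $\sigma_\alpha$ (Lemma \ref{lem:limit}), so $\delta_{P_{\alpha_0}}^2$ is finite and the same compactness applies on $(\alpha_0,C]$. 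Combining the control at the two ends with local finiteness gives finitely many walls in total.
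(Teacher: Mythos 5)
The paper itself does not prove this lemma; it is quoted verbatim from \cite[Lemma 6.24]{ms17}, so there is no internal proof to compare against. Your reconstruction follows the standard architecture (bound the set of destabilizing classes, then show each class contributes finitely many wall points on the ray), and your Steps (i) and (ii) are essentially correct on any \emph{compact} subinterval of $[\alpha_0,\infty)$; in particular your treatment of the lower endpoint, using that the plane $P_{\alpha_0}$ is still positive definite, is fine.

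The genuine gap is at the $\alpha\to+\infty$ end. Your compact region $K_{\bar\alpha}$ has size governed by $\delta_{P_{\bar\alpha}}^2=|Z_{\bar\alpha}(\delta)|^2/(2d\bar\alpha^2)$, which grows like $\bar\alpha^2$ when $r\neq 0$, so the union of the $K_{\bar\alpha}$ over the whole ray is unbounded and you must produce a uniform cutoff $\alpha_1$ above which no walls occur. Your justification --- bounding $e_w\coloneqq(D_w-r_wB_0)H$ in a finite set and asserting that ``matching real parts as $\alpha\to\infty$ forces $w\in\bR\delta$'' --- does not achieve this: for fixed $(r_w,e_w)$ the wall equation reads $d\alpha^2(r_we_\delta-re_w)=c_\delta e_w-c_we_\delta$ with $c_v=B_0D_v-s_v-\tfrac12 r_vB_0^2$, which is linear in $\alpha^2$, and its unique root can be made arbitrarily large by letting $s_w$ grow; nothing in your argument bounds $s_w$ independently of $\alpha$. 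Thus ``no genuine wall survives for $\alpha\gg0$'' is precisely the large-volume-limit statement and is being assumed rather than proved; as written the reasoning is circular (you need finiteness of the classes over the whole ray to get the cutoff, and the cutoff to get finiteness). The repair used in \cite{ms17} replaces your $\alpha$-dependent compactness by an $\alpha$-independent bound: for a destabilizing sequence one has $0\le\overline{\Delta}_H(w)$ and $\overline{\Delta}_H(w)+\overline{\Delta}_H(\delta-w)\le\overline{\Delta}_H(\delta)$ for the $H$-discriminant, which together with $0\le e_w\le e_\delta$ bounds $(r_w,HD_w,s_w)$ uniformly on the ray, after which each of the finitely many numerical walls meets the vertical ray in at most one point. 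A secondary issue: your non-vanishing claim in Step (ii) is false as stated --- if $e_w=e_\delta=0$ the wall polynomial vanishes identically while $\delta$ can perfectly well be represented by stable objects --- and this degenerate case should instead be excluded by defining walls as loci where the set of semistable objects of class $\delta$ actually changes.
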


\begin{lem} \label{lem:laxnonempty}
For any spherical class $\delta \in \Delta(X)$, there exists a $\sigma_{\alpha_0}$-semistable spherical object $E$ with $v(E)=\delta$. 
\end{lem}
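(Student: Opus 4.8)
The plan is to produce the desired $\sigma_{\alpha_0}$-semistable spherical object by starting from a semistable object for a nearby genuine stability condition $\sigma_\alpha$ with $\alpha>\alpha_0$ and passing to the limit $\alpha \to \alpha_0+0$, using that $\sigma_\alpha \to \sigma_{\alpha_0}$ by Lemma \ref{lem:limit}. The point is that along a small segment of the ray the \emph{same} object can be kept semistable, so that the phase comparison in the limit is uniform.

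First I would invoke Lemma \ref{lem:finitewall} to pick $\alpha_1 > \alpha_0$ so that no wall for $\delta$ meets the open segment $\{\sigma_\alpha : \alpha_0 < \alpha < \alpha_1\}$; consequently this segment lies in a single chamber, and the set of $\sigma_\alpha$-semistable objects of class $\delta$ is independent of $\alpha$ on $(\alpha_0,\alpha_1)$. Since $\sigma_\alpha \in \Stab^*(X)$ for $\alpha > \alpha_0$ by Proposition \ref{prop:a>a_0} (1), Lemma \ref{lem:sphexist} applied at some fixed $\alpha^\ast \in (\alpha_0,\alpha_1)$ yields a $\sigma_{\alpha^\ast}$-semistable spherical object $E$ with $v(E)=\delta$. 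By chamber-wise constancy of the semistable locus, this one object $E$ is in fact $\sigma_\alpha$-semistable for every $\alpha \in (\alpha_0,\alpha_1)$.

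The main step is to show that this fixed $E$ is $\sigma_{\alpha_0}$-semistable. Here I would use the explicit containment established in the proof of Lemma \ref{lem:limit}: for each $\epsilon>0$ there exists $\eta>0$ such that $0<\alpha-\alpha_0<\eta$ forces $\mcP_\alpha(\phi)\subset\mcP_{\alpha_0}((\phi-\epsilon,\phi+\epsilon))$ for all $\phi$. Choosing $\alpha \in (\alpha_0,\alpha_1)$ with $\alpha-\alpha_0<\eta$ and writing $p=\phi_{\sigma_\alpha}(E)$, the containment gives $E \in \mcP_{\alpha_0}((p-\epsilon,p+\epsilon))$, so every Harder--Narasimhan factor of $E$ with respect to $\mcP_{\alpha_0}$ has phase in $(p-\epsilon,p+\epsilon)$, and hence $\phi^+_{\mcP_{\alpha_0}}(E)-\phi^-_{\mcP_{\alpha_0}}(E)<2\epsilon$. (These quantities are well-defined because $\mcP_{\alpha_0}$ is a locally finite slicing by Lemma \ref{lem:HN} and Proposition \ref{prop:locfin}.) Because $E$ is the \emph{same} object for all $\alpha$ in the chamber, I may now let $\epsilon \to 0$, which yields $\phi^+_{\mcP_{\alpha_0}}(E)=\phi^-_{\mcP_{\alpha_0}}(E)$; thus the Harder--Narasimhan filtration of $E$ relative to $\sigma_{\alpha_0}$ is trivial and $E$ is $\sigma_{\alpha_0}$-semistable, as required.

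The main obstacle I anticipate is precisely the legitimacy of keeping a single object $E$ semistable throughout a left-neighborhood of $\alpha_0$: it is this that allows the tolerance $\epsilon$ to shrink to zero while the object stays fixed, and it is supplied exactly by the finiteness of walls in Lemma \ref{lem:finitewall} together with the constancy of the semistable locus inside a chamber. I note that this argument applies uniformly to every spherical class, with no need to treat the massless class $\delta_0$ separately; in that case it simply recovers $E=A$, in agreement with Remark \ref{rem:A is sigma-stable}.
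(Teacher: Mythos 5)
Your proposal follows the paper's proof exactly: finiteness of walls (Lemma \ref{lem:finitewall}) gives a wall-free segment $(\alpha_0,\alpha_1)$, non-emptiness (Lemma \ref{lem:sphexist}) produces a $\sigma_\alpha$-semistable spherical $E$ there, and one concludes that $E$ stays semistable at $\alpha_0$. The only difference is that where the paper simply asserts ``it follows that $E$ remains semistable with respect to $\sigma_{\alpha_0}$,'' you justify this limit step carefully via the containment $\mcP_\alpha(\phi)\subset\mcP_{\alpha_0}((\phi-\epsilon,\phi+\epsilon))$ from the proof of Lemma \ref{lem:limit} and the fact that the fixed object $E$ is semistable throughout the chamber, which is a correct and welcome elaboration.
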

\begin{proof}
By Lemma \ref{lem:finitewall}, there exists a real number $\alpha_1>\alpha_0$ such that there are no walls intersecting the set $\{\sigma_{\alpha}\}_{\alpha \in [\alpha_0, \alpha_1)}$. 
Pick any $\alpha \in (\alpha_0, \alpha_1)$. By Lemma \ref{lem:sphexist}, there exists a $\sigma_\alpha$-semistable object $E$ with $v(E)=\delta$. It follows that $E$ remains semistable with respect to $\sigma_{\alpha_0}$. 
\end{proof}

For $\ell \in \bZ$, we consider a spherical class
\begin{align*}
\delta_{\ell}&\coloneqq e^{\ell H} \cdot \delta_0\\
&=\delta_0+\ell(0,r_0H,\ell HD_0+d\ell r_0)
\end{align*}
on $X$.
By Lemma \ref{lem:laxnonempty}, there is a $\sigma_{\alpha_0}$-semistable object $E_{\ell}$ with $v(E_{\ell})=\delta_{\ell}$ for any integer $\ell \in \bZ$.
Since $Z_{\alpha_0}(\delta_0)=0$ holds, we obtain
\[ 
Z_{\alpha_0}(E_\ell)
=-r_0d\ell^2+2\sqrt{d}\ell \sqrt{-1} 
\]
by the direct computation. Then we have
\[ m_{\sigma_{\alpha_0}}(E_\ell)=|\ell|\sqrt{d} \cdot \sqrt{r^2_0d\ell^2+4}. \]

We will also use the following lemma in the proof of Proposition \ref{prop:laxvshom}.

\begin{lem}\label{lem:prime number}
Let $a$ be a positive integer. For an integer, we define 
\[ f(x)\coloneqq ax^2+4. \]
Then there are a prime number $p$ and positive integers $x_0$ and $x_1$ satisfying the following conditions.
\begin{itemize}
\item[(1)]$p \nmid f(x_0)$ holds.
\item[(2)] $p \mid f(x_1)$ and $p^2 \nmid f(x_1)$ hold.
\end{itemize}
\end{lem}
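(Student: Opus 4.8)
The plan is to isolate a single suitable prime $p$ and then control the $p$-adic valuation of two nearby values of $f$. Throughout I write $\ord_p$ for the $p$-adic valuation.

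First I would observe that condition (1) is essentially free once $p$ is an \emph{odd} prime. The congruence $f(x)\equiv 0 \pmod p$ is quadratic in $x$, hence has at most two solutions modulo $p$; since $p\geq 3$ there is a residue class, represented by a positive integer $x_0$, with $f(x_0)\not\equiv 0\pmod p$, i.e.\ $p\nmid f(x_0)$. Thus all the content lies in condition (2): producing an odd prime $p\nmid a$ and a positive integer $x_1$ with $\ord_p(f(x_1))=1$.

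To find a prime dividing a value of $f$ at all, I would invoke the classical fact (Schur) that the non-constant polynomial $f(x)=ax^2+4$ has infinitely many prime divisors; discarding the finitely many that equal $2$ or divide $a$, I fix an odd prime $p\nmid a$ together with a positive integer $x_1$ such that $p\mid f(x_1)$. (Alternatively this follows from a counting argument: if every prime factor of every value $f(n)$ belonged to the fixed finite set $S=\{2\}\cup\{q\text{ prime}:q\mid a\}$, then the $M$ strictly increasing values $f(1)<\cdots<f(M)$ would all be positive integers $\leq aM^2+4$ composed only of primes in $S$, of which there are only $O((\log M)^{|S|})$ — impossible for large $M$.) A useful bonus is that $p\nmid x_1$ automatically: if $p\mid x_1$ then $p\mid f(x_1)-ax_1^2=4$, forcing $p=2$, contrary to our choice.

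The final step controls the valuation by comparing $f(x_1)$ with $f(x_1+p)$. Expanding gives $f(x_1+p)=f(x_1)+2ax_1p+ap^2\equiv f(x_1)+2ax_1p \pmod{p^2}$. Writing $f(x_1)=pm$, the two values reduce modulo $p^2$ to $pm$ and $p(m+2ax_1)$; since $p$ is odd with $p\nmid a$ and $p\nmid x_1$, we have $p\nmid 2ax_1$, so $m$ and $m+2ax_1$ cannot both be divisible by $p$. Hence at least one of $f(x_1)$ and $f(x_1+p)$ has $p$-adic valuation exactly one, and replacing $x_1$ by $x_1+p$ if necessary yields condition (2). The main obstacle is purely the existence of the prime $p$ in the previous paragraph; everything else reduces to this one-line valuation computation, so I expect the cleanest write-up to cite Schur's theorem (or include the short counting argument) and then read off the lemma.
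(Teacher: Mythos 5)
Your proof is correct, but it reaches the crucial existence statement by a genuinely different route than the paper. Both arguments share the same endgame: expand $f(x_1+p)\equiv f(x_1)+2apx_1\pmod{p^2}$, note $p\nmid 2ax_1$ (using $p\mid f(x_1)\Rightarrow p\nmid x_1$, else $p\mid 4$), and conclude that $f(x_1)$ and $f(x_1+p)$ cannot both be divisible by $p^2$. Where you diverge is in producing the prime $p$ and a value $x_1$ with $p\mid f(x_1)$. The paper characterizes the admissible primes explicitly via the Legendre symbol — it needs $\left(\frac{a}{p}\right)=\left(\frac{-1}{p}\right)=1$ so that $-4a^{-1}$ is a quadratic residue — and then manufactures infinitely many such primes by combining quadratic reciprocity, the Chinese remainder theorem, and Dirichlet's theorem on primes in arithmetic progressions; having infinitely many primes available, it simply takes $p>f(x_0)$ to secure condition (1). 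You instead invoke Schur's theorem (or the self-contained counting argument that $M$ distinct values $f(1)<\cdots<f(M)\leq aM^2+4$ cannot all be built from a fixed finite set of primes), which yields a single suitable odd prime $p\nmid a$ with much less machinery, and you secure condition (1) by the soft observation that a quadratic congruence mod an odd prime has at most two roots. Your version is shorter and avoids Dirichlet's theorem entirely; the paper's version is heavier but pins down exactly which primes work and shows there are infinitely many of them in explicit arithmetic progressions — extra information that is not actually needed for the application in Proposition \ref{prop:laxvshom}. Both proofs are complete and valid.
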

\begin{proof}
Since $4=2^2$ is a square number, for an odd prime number $p$ greater than $a$, there exists a non-zero integer $x$ satisfying
$p \mid f(x)$ holds if and only if 
\[ \left(\frac{a}{p} \right)=\left(\frac{-1}{p} \right)=1 \]
holds, where we use the Legendre symbol for quadratic residues. For an integer $k$, we regard 
\[ \left(\frac{kp}{p} \right)=0. \]
By the first supplement to the quadratic reciprocity law, the condition 
\[ \left(\frac{-1}{p} \right)=1\]
holds if and only if $p \equiv 1 \pmod {4}$ holds.

The positive integer $a$ can be expressed in a unique way 
as $a=m^2 b$, where $m$ is a positive integer and $b$ is a square free number. Then we have
\[ \left(\frac{a}{p}\right)=\left(\frac{m^2}{p} \right)\left(\frac{b}{p} \right)=\left(\frac{b}{p} \right).\]
Consider the prime factorization $b=q_1 \cdots q_n$ of $b$
satisfying the inequality $q_1<\cdots<q_n$. By the law of quadratic reciprocity, we obtain
\[ \left(\frac{b}{p} \right)=\prod_{i=1}^{n} \left(\frac{q_i}{p} \right)
    =\prod_{i=1}^{n}\left(\frac{p}{q_i} \right)(-1)^{\frac{p-1}{2}\cdot\frac{q_i-1}{2}}.\]
If $p$ satisfies $p \equiv 1 \pmod{4}$, we have $(-1)^{\frac{p-1}{2}\cdot\frac{q_i-1}{2}}=1$ for $1 \leq i \leq n$. For $1 \leq i \leq n$, we take a positive integer $r_i$ such that 
\[ \left(\frac{r_i}{q_i} \right)=1 \]
holds. 
Consider the congruence equation
\begin{equation}\label{eq:congruence equation}
\left\{ \,
    \begin{aligned}
     x &\equiv r_1 \pmod{q_1} \\
    &\vdots \\
     x &\equiv r_n \pmod{q_n}\\
     x &\equiv 1 \pmod{4}. 
    \end{aligned}
\right.
\end{equation}
Note that if $q_1=2$ holds, we have $r_1=1$.
By the Chinese remainder theorem, there exists a unique positive integer $r$ modulo $q$ such that $r$ satisfies the congruence equation (\ref{eq:congruence equation}),  where $q$ is a positive integer defined by
\[q=\left\{
\begin{array}{ll}
4b & (q_1 \neq 2)\\
2b & (q_1=2)
\end{array}
\right.\]
Since $r$ satisfies (\ref{eq:congruence equation}), we have $\gcd(q,r)=1$. 
By Dirichlet's theorem on arithmetic progressions, there are infinitely many prime numbers satisfying the congruence equation (\ref{eq:congruence equation}).

Take a positive integer $x_0$.  By the above argument, we can take a prime number $p$ satisfying $p>\max\{a,4, f(x_0)\}$ and (\ref{eq:congruence equation}).
Then the condition (1) is satisfied. Since $p$ satisfies (\ref{eq:congruence equation}), we have 
\[ \left(\frac{a}{p} \right)=\left(\frac{-1}{p} \right)=1.\] 
Therefore, there exists a positive integer $x_1$ satisfying $p \mid f(x_1)$. 
Since $p$ is greater than $a$ and $4$, we have $p \nmid x_1$. We compute $f(x_1+p)$ as
\begin{align*}
    f(x_1+p)&=a(x_1+p)^2+4\\
    &=f(x_1)+2apx_1+ap^2\\
    &\equiv f(x_1)+2apx_1 \pmod {p^2}.
\end{align*}
If $p^2 \mid f(x_1)$ and $p^2 \mid f(x_1+p)$ hold, then we obtain $p \mid 2a x_1$ and this is a contradiction. 
Since $f(x_1)\equiv f(x_1+p) \pmod{p}$, we can choose a positive integer $x_1$ satisfying the condition (2). 
\end{proof}

\vspace{3mm}
\begin{proof}[Proof of Proposition \ref{prop:laxvshom}]
We put $f(x)\coloneqq r_0^2dx^2+4$.
For an integer $\ell \neq 0$, we have
\[ m_{\sigma_{\alpha_0}}(E_\ell)=|\ell|\sqrt{d} \cdot \sqrt{f(\ell)}>0. \]
By Lemma \ref{lem:prime number}, there are a prime number $p$ and positive integers $\ell_0$ and $\ell_1$ satisfying the following conditions:
\begin{itemize}
\item $p \nmid f(\ell_0)$  holds.
\item  $p \mid f(\ell_1)$ and $p^2 \nmid f(\ell_1)$ hold.
\end{itemize}
Therefore, the real number 
\[ \frac{m_{\sigma_{\alpha_0}}(E_{\ell_1})}{m_{\sigma_{\alpha_0}}(E_{\ell_0})}=\frac{|\ell_1|}{|\ell_0|}\cdot \sqrt{\frac{f(\ell_1)}{f(\ell_0)}}\]
is irrational. 

Let $A$ be a spherical object on $X$. 
Assume that \[\bP m(\sigma_{\alpha_0}) = h_A\] holds.
Then we obtain 
\[ \frac{m_{\sigma_{\alpha_0}}(E_{\ell_1})}{m_{\sigma_{\alpha_0}}(E_{\ell_0})}=\frac{h_A(E_{\ell_1})}{h_A(E_{\ell_0})} \in \bQ. \]
This contradicts to the argument in the first paragraph.
\end{proof}

\subsection{$\epsilon$-lax support property}\label{sec:delta-lax support}
In order to prove a deformation result for lax stability conditions, the following variant of the usual support property (see Definition \ref{def:lax support}) is introduced in \cite{bppw}: 

\begin{dfn}\label{def:delta-lax support}
Take a real number $\epsilon>0$.
A lax pre-stability condition $\sigma=(Z,\mcP)$ on $\mcD$ satisfies the $\epsilon$-\textit{lax support property} if there is a constant $C>0$ such that for any real number $\phi \in \bR$ and any massive indecomposable object $E \in \mcP(\phi-\epsilon, \phi+\epsilon)$, we have
\[ \| \cl(E) \| < C|Z(E)|. \]
\end{dfn}

In this subsection, we prove the following proposition:

\begin{prop}\label{prop:delta-lax support property}
Take a real number $0<\epsilon<1/4$.
Then the lax stability condition $\sigma_{\alpha_0}$ satisfies the $\epsilon$-lax support property.
\end{prop}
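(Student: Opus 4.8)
The plan is to reduce the statement, via the Harder--Narasimhan and Jordan--H\"older filtrations in the slicing $\mcP_{\alpha_0}$, to a bound on the multiplicity of the massless object $A$ inside an indecomposable object, and then to control that multiplicity by combining indecomposability with Riemann--Roch. Fix $0<\epsilon<1/4$, a real number $\phi$, and a massive indecomposable $E\in\mcP_{\alpha_0}((\phi-\epsilon,\phi+\epsilon))$. Since the window has length $2\epsilon<1/2<1$, the object $E$ has a finite Harder--Narasimhan filtration, and each semistable factor is of finite length by local finiteness (Proposition \ref{prop:locfin}); hence $E$ has finitely many $\sigma_{\alpha_0}$-stable factors, all of phase in $(\phi-\epsilon,\phi+\epsilon)$. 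By Proposition \ref{prop:a>a_0}(3) the massless stable objects are exactly the shifts $A[\ell]$, which have integer phase; as an open interval of length $<1$ contains at most one integer $\ell_0$, every massless factor equals $A[\ell_0]$. Write the stable factors as $B_1,\dots,B_s$ (massive) together with $N$ copies of $A[\ell_0]$. (If the window contains no integer there are no massless factors and the argument below degenerates to its first half.)

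First I would dispose of the massive part. All the $Z_{\alpha_0}(B_j)$ lie in a sector of half-angle $\pi\epsilon<\pi/2$, while $Z_{\alpha_0}(A[\ell_0])=0$; hence $Z_{\alpha_0}(E)=\sum_j Z_{\alpha_0}(B_j)$ and $|Z_{\alpha_0}(E)|\ge \cos(\pi\epsilon)\sum_j|Z_{\alpha_0}(B_j)|>0$. Applying the support property of Proposition \ref{prop:support} to each $B_j$ and summing gives
\[
\sum_{j=1}^s\|v(B_j)\|\le C_0\sum_{j=1}^s|Z_{\alpha_0}(B_j)|\le \frac{C_0}{\cos(\pi\epsilon)}\,|Z_{\alpha_0}(E)|.
\]
Since $\|v(E)\|\le\sum_j\|v(B_j)\|+N\|\delta_0\|$, it remains to bound $N$; in fact it suffices to prove $N\le C_1\sum_j\|v(B_j)\|$, as this then chains with the displayed inequality to give $N\|\delta_0\|\le C'|Z_{\alpha_0}(E)|$.

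The bound on $N$ is the core of the argument. Because $A$ is spherical we have $\Ext^1(A[\ell_0],A[\ell_0])=0$ and $A[\ell_0]$ is simple in $\mcP_{\alpha_0}(\ell_0)$ (Remark \ref{rem:A is sigma-stable}); two copies of $A[\ell_0]$ thus cannot be glued directly, so in the indecomposable $E$ each copy must be attached to some $B_j$ through a nonzero extension. Making this precise by a connectedness argument for the $\Ext^1$-quiver of the stable factors, I would obtain $N\le \sum_{j=1}^s\bigl(\ext^1(A[\ell_0],B_j)+\ext^1(B_j,A[\ell_0])\bigr)$. All the stable factors lie in the single heart $\mcP_{\alpha_0}((\ell_0-\tfrac12,\ell_0+\tfrac12])$, so $\Ext^i(A[\ell_0],B_j)$ vanishes outside $i\in\{0,1,2\}$ and $\hom-\ext^1+\ext^2=\chi(A[\ell_0],B_j)=-\langle v(A[\ell_0]),v(B_j)\rangle$. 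Combining Serre duality $\ext^2(A[\ell_0],B_j)=\hom(B_j,A[\ell_0])$ with the slicing vanishing $\Hom(\mcP_{\alpha_0}(\psi),\mcP_{\alpha_0}(\psi'))=0$ for $\psi>\psi'$, and using $|\phi_{\sigma_{\alpha_0}}(B_j)-\ell_0|<1/2$, exactly one of $\hom(A[\ell_0],B_j)$, $\hom(B_j,A[\ell_0])$ survives (both vanish when $\phi_{\sigma_{\alpha_0}}(B_j)=\ell_0$). Hence each $\ext^1$ is bounded by $|\langle\delta_0,v(B_j)\rangle|$ plus that single surviving $\hom$, and since $|\langle\delta_0,v(B_j)\rangle|\le K\|v(B_j)\|$, the desired inequality $N\le C_1\sum_j\|v(B_j)\|$ reduces to the estimate $\hom(A[\ell_0],B_j)\le C_2\|v(B_j)\|$ together with its Serre dual.

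This last estimate is where the genuine work lies, and I expect it to be the main obstacle. Reducing $B_j$ to its cohomology sheaves inside $\mcA_\mu$ and passing to their Harder--Narasimhan factors, it amounts to bounding $\hom(A,G)=h^0(A^\vee\otimes G)$ for a $\mu_H$-semistable sheaf $G$. Since $A^\vee\otimes G$ is again $\mu_H$-semistable, the Le Potier--Simpson estimate for global sections of semistable sheaves, combined with the Bogomolov inequality, bounds $h^0$ by a constant multiple of $\|v(G)\|$ (the potentially quadratic growth in the slope is absorbed by the $\ch_2$-component of the Mukai vector). Assembling all the inequalities yields $\|v(E)\|\le C|Z_{\alpha_0}(E)|$ with $C$ independent of $E$; enlarging $C$ slightly and using $|Z_{\alpha_0}(E)|>0$ makes the inequality strict, which is precisely the $\epsilon$-lax support property.
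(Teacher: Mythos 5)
Your opening reduction (finitely many stable factors in the narrow window, the sector estimate $|Z_{\alpha_0}(E)|\ge\cos(\pi\epsilon)\sum_j|Z_{\alpha_0}(B_j)|$, and the observation that all massless factors are copies of a single shift $A[\ell_0]$) is sound and matches the paper's Lemma \ref{lem:lax support without massless}. The divergence, and the problem, is in how you bound the multiplicity $N$ of $A[\ell_0]$. The inequality $N\le\sum_j\bigl(\ext^1(A[\ell_0],B_j)+\ext^1(B_j,A[\ell_0])\bigr)$ is asserted via a ``connectedness argument for the $\Ext^1$-quiver,'' but an iterated extension is not a quiver representation and no such general multiplicity bound is available off the shelf; making this precise is essentially the entire content of the proof. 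The paper does not attempt a one-shot bound on $N$ at all: it repeatedly strips off the maximal $A$-isotypic quotient and subobject using the torsion pairs $(\mcB_0^\perp,\mcB_0)$ and $(\mcB_0,{}^\perp\mcB_0)$, and Lemma \ref{lem:norm and red} shows that each stripping does not increase the norm, because the norm is built from a basis with $v_1=\delta_0$ orthogonal to the remaining $v_i$, and because for the reduced object $\Hom(A,E_{\mathrm{red}})=\Hom(E_{\mathrm{red}},A)=0$ forces $\ext^1(A,E_{\mathrm{red}})=\langle v(A),v(E_{\mathrm{red}})\rangle$, a \emph{linear} functional of the Mukai vector. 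Your route forgoes both of these devices and therefore has to pay for them elsewhere.

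Where it pays is the final estimate $\hom(A[\ell_0],B_j)\le C_2\|v(B_j)\|$, and this step I do not believe. First, reducing $B_j$ to the Harder--Narasimhan factors of its cohomology sheaves replaces $\|v(B_j)\|$ by $\sum\|v(G_k)\|$, which is not bounded by a multiple of $\|v(B_j)\|$ (the Mukai vectors of the pieces can cancel), so even a correct bound for each piece would not chain back to $B_j$. Second, the bound fails for individual $\mu_H$-semistable sheaves: for $G=I_Z(nH)$ with $Z$ a length-$n^2d$ subscheme contained in a curve in $|H|$, one has $v(G)=(1,nH,1)$, so $\|v(G)\|$ grows linearly in $n$, while $\hom(\mcO_X,G)\ge h^0(\mcO_X((n-1)H))$ grows quadratically; Le Potier--Simpson gives an upper bound quadratic in the slope, and the $\ch_2$-component of $v(G)$ does \emph{not} absorb it unless the sheaf is close to Bogomolov-extremal. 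The quantity that is uniformly controlled by $\|v(B_j)\|$ is $\chi(A,B_j)$, not $\hom(A,B_j)$, which is exactly why the paper arranges (via the reduction $E\mapsto E_{\mathrm{red}}$) to only ever need $\ext^1$ of pairs whose $\Hom$ and $\Ext^2$ vanish. As written, your argument has a genuine gap at both of these points, and I would not expect the second one to be repairable along the proposed lines.
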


We use the following norm on the $\bR$-vector space $\Halg(X,\bZ)_{\bR}$.

\begin{dfn}
Fix a $\bR$-basis $\{v_1, \cdots v_{\rho(X)+2}\}$ of $\Halg(X,\bZ)_\bR$ satisfying 
\[ v_1=\delta_0,~\langle v_1, v_i \rangle=0 \]
for $2 \leq i \leq \rho(X)+2$.
For $v \in \Halg(X,\bZ)_\bR$, we define 
\[ \| v \|\coloneqq |\langle v_1,v\rangle | +\sum_{i=2}^{\rho(X)+2}|\langle v_i,v \rangle|. \]
Then $\|-\|$ is a norm on $\Halg(X,\bZ)_\bR$.
\end{dfn}

\begin{rmk}\label{rem:known support inequaility}
By Lemma \ref{lem:massless is semistable}, Proposition \ref{prop:locfin} and Proposition \ref{prop:support}, there is a constant $C>0$ such that the inequality
\[  \|v(E) \|< C|Z_{\alpha_0}(E)| \]
holds for non-zero object $E$ in $D^b(X)$ satisfying one of the following conditions:
\begin{itemize}
\item[(1)] $E \in \mcP_{\alpha_0}(\phi)$ for some $\phi \in \bR \setminus \bZ$,.
\item[(2)]  $E$ is a massive $\sigma_{\alpha_0}$-stable of phase $\phi$ for some $\phi \in \bZ$. 
\end{itemize}
\end{rmk}

Fix $0<\epsilon<1/4$ and take $\phi \in \bR$. 
Let $C>0$ be a constant as in Remark \ref{rem:known support inequaility}.
The following holds:

\begin{lem}\label{lem:lax support without massless}
Let $E \in \mcP_{\alpha_0}(\phi-\epsilon, \phi+\epsilon)$ be a non-zero object. Assume that all $\sigma_{\alpha_0}$-semistable factors of $E$ satisfy the condition $(1)$ or $(2)$ in Remark \ref{rem:known support inequaility}. 
Then we have 
\[  \|v(E) \|< \frac{C}{\cos(2\pi \epsilon)}|Z_{\alpha_0}(E)|. \]
\end{lem}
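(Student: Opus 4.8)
The plan is to reduce the estimate to the support inequality of Remark \ref{rem:known support inequaility} applied to the individual Harder--Narasimhan factors of $E$, and then to upgrade the usual triangle inequality $m_{\sigma_{\alpha_0}}(E) \geq |Z_{\alpha_0}(E)|$ into a bound in the reverse direction, where the loss is controlled precisely by the factor $\cos(2\pi\epsilon)$. The hypothesis $\epsilon < 1/4$ is what will make this reverse bound available.

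First I would take the Harder--Narasimhan filtration of $E$ with respect to $\sigma_{\alpha_0}$, with semistable factors $A_1, \dots, A_n \in \mcP_{\alpha_0}(\phi_i)$ and $\phi_1 > \cdots > \phi_n$. Since $E \in \mcP_{\alpha_0}(\phi-\epsilon, \phi+\epsilon)$, every phase satisfies $\phi_i \in (\phi-\epsilon, \phi+\epsilon)$. By hypothesis each $A_i$ satisfies condition $(1)$ or $(2)$ of Remark \ref{rem:known support inequaility}; in either case $A_i$ is massive, where for $(1)$ this uses Lemma \ref{lem:massless is semistable} (a massless semistable factor is forced to have integral phase), so that $Z_{\alpha_0}(A_i) \neq 0$ with $\arg Z_{\alpha_0}(A_i) = \pi\phi_i$. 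Remark \ref{rem:known support inequaility} then gives $\|v(A_i)\| < C|Z_{\alpha_0}(A_i)|$ for each $i$, and summing, using additivity of $v$ and the triangle inequality for $\|-\|$, yields $\|v(E)\| \leq \sum_{i=1}^n \|v(A_i)\| < C\sum_{i=1}^n |Z_{\alpha_0}(A_i)| = C\, m_{\sigma_{\alpha_0}}(E)$, the strictness coming from $n \geq 1$.

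It then remains to show $m_{\sigma_{\alpha_0}}(E) \leq |Z_{\alpha_0}(E)|/\cos(2\pi\epsilon)$, and here I would use an elementary angular projection argument. All the nonzero vectors $Z_{\alpha_0}(A_i)$ have argument $\pi\phi_i$ lying in the arc $(\pi\phi - \pi\epsilon, \pi\phi + \pi\epsilon)$ of width $2\pi\epsilon < \pi/2$; since $Z_{\alpha_0}(E) = \sum_i Z_{\alpha_0}(A_i)$ is a nonzero sum of vectors in this cone, its argument $\psi$ lies in the same closed arc, so $|\pi\phi_i - \psi| \leq 2\pi\epsilon$ for every $i$. Projecting onto the direction $\exp(\sqrt{-1}\psi)$ gives $\Re\bigl(\exp(-\sqrt{-1}\psi) Z_{\alpha_0}(A_i)\bigr) = |Z_{\alpha_0}(A_i)| \cos(\pi\phi_i - \psi) \geq \cos(2\pi\epsilon)\, |Z_{\alpha_0}(A_i)|$, using that $\cos$ is positive and decreasing on $[0, \pi/2]$. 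Summing over $i$ and using $|Z_{\alpha_0}(E)| = \Re\bigl(\exp(-\sqrt{-1}\psi) Z_{\alpha_0}(E)\bigr)$ produces $|Z_{\alpha_0}(E)| \geq \cos(2\pi\epsilon)\, m_{\sigma_{\alpha_0}}(E)$, which combines with the bound of the previous paragraph to give the claim.

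The only genuinely delicate point, and the one I would take care over, is verifying that every Harder--Narasimhan factor is massive, so that the support inequality of Remark \ref{rem:known support inequaility} applies verbatim and each $\pi\phi_i$ is an honest argument of a nonzero complex number; once this is secured, the convexity/projection estimate is routine. I would also double-check that $\epsilon < 1/4$ is used exactly to guarantee $2\pi\epsilon < \pi/2$, ensuring both $\cos(2\pi\epsilon) > 0$ and the monotonicity of cosine on the relevant interval.
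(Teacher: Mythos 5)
Your proof is correct and follows essentially the same route as the paper's: the paper likewise takes the Harder--Narasimhan filtration, applies the support inequality of Remark \ref{rem:known support inequaility} to each factor, and concludes via the chain $\|v(E)\| \leq \sum_i \|v(A_i)\| < C\sum_i |Z_{\alpha_0}(A_i)| \leq \frac{C}{\cos(2\pi\epsilon)}|Z_{\alpha_0}(E)|$. The only difference is that you spell out the angular projection argument behind the last inequality and the massiveness of the factors, both of which the paper leaves implicit.
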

\begin{proof}
Take the Harder--Narasimhan filtration of $E$ with respect to $\sigma_{\alpha_0}$:
\[0=E_0 \subset E_1 \subset E_2 \subset \cdots \subset E_{n-1} \subset E_n=E \]
with the $\sigma_{\alpha_0}$-semistable factor $A_i\coloneqq E_i/E_{i-1}$ of phase $\phi_i$ for $1 \leq i \leq n$. 
For each $1 \leq i \leq n$, the object $A_i$ satisfies the condition (2) in Remark \ref{rem:known support inequaility} by assumption. Then we obtain
\[ \|v(E)\|\leq \sum_{i=1}^n\|v(A_i)\|<C \sum_{i=1}^n|Z_{\alpha_0}(A_i)|\leq \frac{C}{\cos(2\pi\epsilon)}|Z_{\alpha_0}(E)|.\]
\end{proof}

The quasi-abelian category $\mcP_{\alpha_0}(\phi-\epsilon, \phi+\epsilon)$ is a full subcategory of the heart 
\[ \mcB \coloneqq \mcP_{\alpha_0}((\phi-\epsilon, \phi-\epsilon+1]) \]
of a bounded t-structure on $D^b(X)$. 
We assume that $(\phi-\epsilon, \phi+\epsilon)$ contains an integer. Taking shifts, we may assume that $0 \in (\phi-\epsilon, \phi+\epsilon)$. Now, putting $\arg(0)\coloneqq 0$, we can consider the map 
 $\arg Z_{\alpha_0} \colon \mcB \setminus \{0 \} \to (\phi-\epsilon, \phi-\epsilon+1]$. Similarly to the previous subsections, we write 
 \[ \phi_{\alpha_0}(E) \coloneqq \frac{1}{\pi}\arg Z_{\alpha_0}(E) \]
 for $E \in \mcB \setminus \{0 \}$.
Note that a non-zero object $E$ in $\mcB$ is $\sigma_{\alpha_0}$-semistable if and only if we have $\phi_{\alpha_0}(F) \leq \phi_{\alpha_0}(G)$
for any short exact sequence 
\[
0 \to F \to E \to G \to 0
\]
in $\mcB$.
We define the massless subcategory
\[ \mcB_0 \coloneqq \{ E \in \mcB \colon Z_{\alpha_0}(E)=0 \}  \]
of $\mcB$. By Lemma \ref{lem:massless is semistable}, we have
\[ \mcB_0=\{E \in \mcB \colon E \simeq A^{\oplus N}~\text{for some $N \geq 0$} \}. \]
We define full subcategories 
\begin{align*}
\mcB^\perp_0 &\coloneqq \{ E \in \mcB \colon \Hom(E, \mcB_0)=0 \},\\
 {}^\perp\mcB_0 &\coloneqq \{ E \in \mcB \colon \Hom(\mcB_0, E)=0 \} 
 \end{align*}
of $\mcB$.
Then the following holds:

\begin{lem}
The pairs $(\mcB^\perp_0,\mcB_0)$ and $(\mcB_0,{}^\perp\mcB_0)$ are torsion pairs on $\mcB$.
\end{lem}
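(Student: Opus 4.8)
The plan is to exhibit, for every object $E \in \mcB$, the two defining short exact sequences of a torsion pair, the $\Hom$-orthogonality being immediate from the definitions of $\mcB^\perp_0$ and ${}^\perp\mcB_0$. The input I would record first is the internal structure of $\mcB_0$. By Lemma \ref{lem:massless is semistable} we have $\mcB_0 = \{E \in \mcB : E \simeq A^{\oplus N}\}$; by Remark \ref{rem:A is sigma-stable} (applied to the appropriate shift, which lies in $\mcB$ because $0 \in (\phi-\epsilon,\phi-\epsilon+1]$ when $0 \in (\phi-\epsilon,\phi+\epsilon)$ and $\epsilon<1/4$) the object $A$ is simple in $\mcB$ with $\End_\mcB(A)=\bC$; and since $A$ is spherical we have $\Ext^1_{D^b(X)}(A,A)=0$, hence $\Ext^1_\mcB(A,A)=0$ because $\Ext^1$ computed in the heart of a bounded t-structure agrees with $\Ext^1$ in $D^b(X)$. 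Thus $\mcB_0$ is an $\Ext^1$-orthogonal copy of $\mathrm{add}(A)$, and all Hom-spaces involving $A$ are finite-dimensional.

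For the pair $(\mcB_0, {}^\perp\mcB_0)$, I would build the maximal $A$-subobject of $E$ via the evaluation map. Set $W := \Hom_\mcB(A,E)$, $n := \dim_\bC W$, and let $\mathrm{ev}\colon A^{\oplus n} \to E$ be the map whose components run over a basis of $W$. I claim $\mathrm{ev}$ is a monomorphism: its kernel $K$ is a subobject of the semisimple object $A^{\oplus n}$, hence $K \simeq A^{\oplus k}$, and applying $\Hom_\mcB(A,-)$ shows $\Hom_\mcB(A,K)$ injects into $\Hom_\mcB(A,A^{\oplus n})=W$ and maps to zero in $\Hom_\mcB(A,E)=W$ under the identity map, forcing $k=0$. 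Therefore $T := A^{\oplus n}$ is a subobject of $E$ lying in $\mcB_0$, and I set $F := E/T$. The long exact sequence obtained by applying $\Hom_\mcB(A,-)$ to $0 \to T \to E \to F \to 0$ reads
\[
0 \to \Hom_\mcB(A,T) \to \Hom_\mcB(A,E) \to \Hom_\mcB(A,F) \to \Ext^1_\mcB(A,T) .
\]
Here $\Hom_\mcB(A,T) \to \Hom_\mcB(A,E)$ is an isomorphism by construction (every map $A\to E$ is a coordinate of $\mathrm{ev}$, hence factors through $T$), and $\Ext^1_\mcB(A,T)=\Ext^1_\mcB(A,A)^{\oplus n}=0$; thus $\Hom_\mcB(A,F)=0$, i.e. $F \in {}^\perp\mcB_0$. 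This produces the required sequence $0\to T\to E\to F\to 0$ with $T\in\mcB_0$ and $F\in{}^\perp\mcB_0$.

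For the pair $(\mcB^\perp_0, \mcB_0)$ I would run the dual argument with the co-evaluation map. Put $V := \Hom_\mcB(E,A)$ and let $g\colon E \to A^{\oplus \dim V}$ be the map with components a basis of $V$. Applying $\Hom_\mcB(-,A)$ to $E \xrightarrow{g} A^{\oplus \dim V} \to \Coker g \to 0$ and using that $\Hom_\mcB(A^{\oplus\dim V},A)\to\Hom_\mcB(E,A)$ is an isomorphism shows $\Hom_\mcB(\Coker g, A)=0$; since $\Coker g$ is a quotient of $A^{\oplus\dim V}$, hence a power of $A$, this forces $\Coker g=0$, so $g$ is an epimorphism onto an object of $\mcB_0$. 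Writing $E' := \Ker g$, the long exact sequence for $\Hom_\mcB(-,A)$ together with $\Ext^1_\mcB(A,A)=0$ gives $\Hom_\mcB(E',A)=0$, i.e. $E' \in \mcB^\perp_0$, yielding $0\to E'\to E\to A^{\oplus\dim V}\to 0$ as required. In both cases the orthogonality clause $\Hom_\mcB(\mcB^\perp_0,\mcB_0)=0$ and $\Hom_\mcB(\mcB_0,{}^\perp\mcB_0)=0$ holds tautologically.

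The only genuinely load-bearing point, and hence the main thing to get right, is the vanishing $\Ext^1_\mcB(A,A)=0$: it is what makes $\mcB_0$ behave like a semisimple block and guarantees that the connecting maps in the two long exact sequences vanish. I would therefore make the reduction $\Ext^1_\mcB(A,A)=\Ext^1_{D^b(X)}(A,A)=0$ explicit, invoking sphericity of $A$ and the standard identification of $\Ext^1$ in a heart with $\Ext^1$ in the ambient triangulated category; everything else is the routine simplicity/semisimplicity bookkeeping sketched above.
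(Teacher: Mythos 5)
Your proof is correct and follows essentially the same route as the paper: build the evaluation map $\Hom(A,E)\otimes A\to E$ and the coevaluation map $E\to\Hom(E,A)^*\otimes A$, prove they are respectively mono and epi using simplicity of $A$ in $\mcB$, and kill the connecting maps with $\Ext^1(A,A)=0$ to land the cokernel in ${}^\perp\mcB_0$ and the kernel in $\mcB_0^\perp$. The only cosmetic difference is that you deduce injectivity/surjectivity from semisimplicity of $A^{\oplus n}$ (subobjects and quotients are again powers of $A$) plus functoriality, whereas the paper argues directly with intersections of the images $\Image(f_i)\cap\Image(f_j)$ and linear independence of the projections $p_i$; both reduce to the same inputs, namely that $A$ is simple in $\mcB$ with $\End(A)=\bC$, which in turn rests on the massless-object lemmas exactly as you indicate.
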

\begin{proof}
By Lemma \ref{lem:massless is semistable} and Lemma \ref{lem:sub of massless}, $A$ is simple in $\mcB$. 
First, we prove that $(\mcB^\perp_0,\mcB_0)$ is a torsion pair on $\mcB$. Let $E$ be an object in $\mcB$. We show that the evaluation map $\mathrm{ev} \colon \Hom(A,E) \otimes A \to E$ is injective in $\mcB$. For a basis $\{ f_1, \cdots, f_N\}$ of $\Hom(A,E)$, we can regard the map $\mathrm{ev}\colon \Hom(A,E)\otimes A \to E$ as
\[ \mathrm{ev}=(f_1, \cdots, f_N) \colon A^{\oplus N} \to E. \]
Since $A$ is simple, $f_i$ is injective in $\mcB$ for $1 \leq i \leq N$. For $i, j \in \{1,2, \cdots, N\}$, the intersection $\mathrm{Im}(f_i) \cap \mathrm{Im}(f_j)$ of the images of $f_i$ and $f_j$ is isomorphic to $A$ or $0$. Since $\Hom(A,A)=\bC$, the condition $\mathrm{Im}(f_i) \cap \mathrm{Im}(f_j) \simeq A$
implies that $f_i=\lambda f_j$ holds for some $\lambda \in \bC^*$. The latter condition is equivalent to $i=j$
since $\{ f_1, \cdots, f_N\}$ is a basis of $\Hom(A, E)$. Therefore, the evaluation map $\mathrm{ev} \colon \Hom(A,E) \otimes A \to E$ is injective in $\mcB$. Let $F\coloneqq\Coker (\mathrm{ev})$. 
Since $\Ext^1(A,A)=0$, we have the exact sequence
 \[ 0 \to \Hom(A,E) \otimes \Hom(A,A) \xrightarrow{\mathrm{ev}} \Hom(A,E) \to \Hom(A, F) \to 0, \]
 Since $\mathrm{ev} \colon \Hom(A,E) \otimes \Hom(A,A) \to \Hom(A,E)$ is an isomorphism, we have $\Hom(A,F)=0$. Therefore,we have $F \in \mcB^\perp_0$ and $(\mcB^\perp_0,\mcB_0)$ is a torsion pair on $\mcB$.

Next, we prove that $(\mcB_0,{}^\perp\mcB_0)$ is a torsion pair on $\mcB$. Consider the coevaluation map $\mathrm{coev} \colon E \to \Hom(E,A)^* \otimes A$. Take a basis $\{g_1, \cdots, g_M \}$ of $\Hom(E,A)$. Then we can regard the map $\mathrm{coev} \colon E \to \Hom(E,A)^* \otimes A$ as 
\[ \mathrm{coev}=
\begin{pmatrix}
g_1 \\
\vdots \\
g_M
\end{pmatrix} 
\colon E \to A^{\oplus M}. \]
Note that there is the canonical morphism $\widetilde{\mathrm{coev}} \colon E \to \mathrm{Im}(\mathrm{coev})$ such that $\mathrm{coev}=\iota \circ \widetilde{\mathrm{coev}}$ holds, where $\iota \colon \mathrm{Im}(\mathrm{coev}) \to A^{\oplus M}$ is the natural inclusion.
Since $A$ is simple, there is an integer $0 \leq M' \leq M$ such that 
\[\mathrm{Im}(\mathrm{coev}) \simeq A^{\oplus M'}\]
holds. Then we have $\hom(\mathrm{Im}(\mathrm{coev}),A)=M'$. 
For $1 \leq i \leq M$, consider the morphism $p_i \colon \mathrm{Im}(\mathrm{coev}) \to A$ defined by $p_i \coloneqq \mathrm{pr}_i \circ \iota$, where $\mathrm{pr}_i \colon A^{\oplus M} \to A$ is the $i$-th projection. Since $p_i \circ \widetilde{\mathrm{coev}}=g_i$ holds for $1 \leq i \leq M$, the morphisms $p_1, \cdots, p_M$ are linearly independent. Therefore, we obtain $M'=M$ and $\mathrm{coev} \colon E \to \Hom(E,A)^* \otimes A$ is surjective in $\mcB$. 
Let $F \coloneqq \ker(\mathrm{coev})$ be its kernel. 
Then we obtain an exact sequence 
\[
0 \to \Hom(E, A) \otimes \Hom(A, A) \xrightarrow{\mathrm{coev}} \Hom(E, A) \to \Hom(F, A) \to 0, 
\]
from which we see that $\Hom(F, A)=0$, i.e., $F \in ^{\perp}\mcB_0$. Therefore, $(\mcB_0,{}^\perp\mcB_0)$ is a torsion pair on $\mcB$.
\end{proof}


\begin{rmk}\label{rem:red}
Let $E$ be a non-zero object in $\mcB$. Using the torsion pair $(\mcB^\perp_0,\mcB_0)$, there is a  unique exact sequence
\begin{equation}\label{eq:torsion M}
0 \to F \to E \to A^{\oplus M} \to 0 
\end{equation}
in $\mcB$ such that $\Hom(F, A)=0$.
Similarly, using the torsion pair $(\mcB_0,{}^\perp\mcB_0)$, we have a unique exact sequence
\begin{equation}\label{eq:torsion N}
0 \to A^{\oplus N} \to F \to G \to 0 
\end{equation}
in $\mcB$ such that $\Hom(A, G)=0$.
We put  $E_{\mathrm{red}} \coloneqq G$.
Note that 
\[ \Hom(A,E_\mathrm{red})=\Hom(E_\mathrm{red},A)=0\]
holds.
By Proposition \ref{prop:a>a_0} (3), we have
\[ Z_{\alpha_0}(E)=Z_{\alpha_0}(E_\mathrm{red}). \]
\end{rmk}

The following key lemma compares the norm of $E$ and that of $E_{\mathrm{red}}$ for an indecomposable $E \in \mcB$: 
\begin{lem}\label{lem:norm and red}
Let $E$ be a non-zero object in $\mcB$. Assume that $A$ is not a direct summand of $E$. Then we have
\[ |\langle v(A), v(E) \rangle| \leq  \langle v(A), v(E_\mathrm{red}) \rangle. \]
\end{lem}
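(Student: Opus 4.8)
The plan is to work entirely with Mukai vectors and reduce the inequality to a single numerical estimate controlled by $\Ext^1(A,-)$. Using the two exact sequences of Remark \ref{rem:red}, namely $0 \to A^{\oplus N} \to F \to E_{\mathrm{red}} \to 0$ and $0 \to F \to E \to A^{\oplus M} \to 0$, additivity of the Mukai vector gives $v(E) = v(E_{\mathrm{red}}) + (M+N)\,v(A)$. Since $A$ is spherical, $\langle v(A), v(A)\rangle = -2$, so
\[
\langle v(A), v(E)\rangle = \langle v(A), v(E_{\mathrm{red}})\rangle - 2(M+N).
\]
Moreover, by the Riemann--Roch formula, Serre duality, and the vanishing $\Hom(A, E_{\mathrm{red}}) = \Hom(E_{\mathrm{red}}, A) = 0$ recorded in Remark \ref{rem:red}, one has $\langle v(A), v(E_{\mathrm{red}})\rangle = -\chi(A, E_{\mathrm{red}}) = \ext^1(A, E_{\mathrm{red}}) \geq 0$. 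Writing $a \coloneqq \langle v(A), v(E_{\mathrm{red}})\rangle \geq 0$, the desired inequality $|\langle v(A), v(E)\rangle| \leq a$ is equivalent to $|a - 2(M+N)| \leq a$. The upper bound $a - 2(M+N) \leq a$ is automatic, so everything reduces to the lower bound, that is, to proving $M + N \leq \ext^1(A, E_{\mathrm{red}})$.

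Next I would compute $\ext^1(A, E_{\mathrm{red}})$ exactly. Applying $\Hom(A,-)$ to $0 \to A^{\oplus N} \to F \to E_{\mathrm{red}} \to 0$ and using that $A$ is spherical (so $\Ext^1(A,A) = 0$) gives the exact sequence
\[
0 \to \Ext^1(A, F) \to \Ext^1(A, E_{\mathrm{red}}) \to \Ext^2(A, A^{\oplus N}) \to \Ext^2(A, F).
\]
Here $\Ext^2(A, F) \cong \Hom(F, A)^* = 0$ by Serre duality, since $F \in \mcB_0^\perp$ forces $\Hom(F, A) = 0$. As $\Ext^2(A, A^{\oplus N}) \cong \bC^N$, this yields $\ext^1(A, E_{\mathrm{red}}) = \ext^1(A, F) + N$. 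Thus the target inequality reduces further to $M \leq \ext^1(A, F)$.

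Finally, this last bound is where the hypothesis that $A$ is not a direct summand of $E$ enters, and I expect it to be the main point of the argument. Applying $\Hom(A,-)$ to $0 \to F \to E \to A^{\oplus M} \to 0$ produces a connecting map
\[
\partial \colon \Hom(A, A^{\oplus M}) \to \Ext^1(A, F), \qquad \Hom(A, A^{\oplus M}) \cong \bC^M,
\]
and $M \leq \ext^1(A, F)$ follows once $\partial$ is injective. If $\partial$ had a nonzero kernel, then by exactness there would be a nonzero $\phi \in \Hom(A, A^{\oplus M})$ of the form $\phi = \pi \circ \psi$ for some $\psi \colon A \to E$, where $\pi \colon E \to A^{\oplus M}$ is the quotient. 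Since $\End(A) = \bC$, we identify $\Hom(A, A^{\oplus M})$ with $\bC^M$ and $\Aut(A^{\oplus M})$ with $\GL_M(\bC)$, which acts transitively on nonzero vectors; replacing $\pi$ by $g \circ \pi$ for a suitable automorphism $g$ we may assume $\phi$ is the inclusion $\iota_1$ of the first summand, whence $\mathrm{pr}_1 \circ g \circ \pi \circ \psi = \id_A$. This exhibits $\psi \colon A \to E$ as a split monomorphism, so $A$ is a direct summand of $E$, contradicting the hypothesis. Hence $\partial$ is injective, $M \leq \ext^1(A, F)$, and combining with the previous paragraph gives $M + N \leq \ext^1(A, F) + N = \ext^1(A, E_{\mathrm{red}})$, which completes the proof.
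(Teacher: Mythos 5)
Your proof is correct and follows essentially the same route as the paper: the identity $v(E)=v(E_{\mathrm{red}})+(M+N)v(A)$, the identification $\langle v(A),v(E_{\mathrm{red}})\rangle=\ext^1(A,E_{\mathrm{red}})$, the count $\ext^1(A,E_{\mathrm{red}})=\ext^1(A,F)+N$, and the bound $M\leq\ext^1(A,F)$ forced by the hypothesis that $A$ is not a direct summand of $E$. The only cosmetic difference is that you apply $\Hom(A,-)$ to the second sequence where the paper applies $\Hom(-,A)$ and invokes Serre duality, and you spell out the splitting argument for the connecting map that the paper leaves implicit.
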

\begin{proof}
We use the notation as in Remark \ref{rem:red}.
Then we have 
\[ v(E)=v(E_{\mathrm{red}})+(M+N)v(A).\]
Recall that $v(A)$ is a spherical class. 
Therefore, the equality
\[ \langle v(A), v(E) \rangle=\langle v(A),  v(E_{\mathrm{red}}) \rangle  -2(M+N) \]
holds.

Applying $\Hom(A,-)$ to (\ref{eq:torsion M}), we have the exact sequence
\[
\begin{array}{r@{\quad\longrightarrow\quad}r@{\quad\longrightarrow\quad}l}
0 & \Hom(A,F) & \multicolumn{1}{l}{\Hom(A,E)\quad\overset{\varphi}{\longrightarrow}\quad\Hom(A, A^{\oplus M})} \\
  & \Ext^1(A,F) & \multicolumn{1}{l}{\Ext^1(A,E)\quad\longrightarrow\quad 0}
\end{array}
\]
If $\varphi$ is non-zero, then $A$ becomes a direct summand of $E$. By assumption, we have $\varphi=0$.
Therefore, the inequality
\begin{equation}\label{eq:M}
M \leq \ext^1(A,F) 
\end{equation}
holds.
Similarly, applying $\Hom(-,A)$ to (\ref{eq:torsion N}), we also have the exact sequence
\[ 0 \to \Hom(A^{\oplus N},A) \to \Ext^1(E_\mathrm{red},A) \to \Ext^1(F,A) \to 0 \]
by Remark \ref{rem:red}.
In particular, we have
\begin{equation}\label{eq:N}
\ext^1(A,E_\mathrm{red})=\ext^1(A,F)+N 
\end{equation}
by Serre duality. Combining (\ref{eq:M}) and (\ref{eq:N}), we obtain
\[ 0 \leq M+N \leq \ext^1(A,E_\mathrm{red})=\langle v(A), v(E_\mathrm{red}) \rangle. \]
Therefore, the inequality
\[ -\langle v(A), v(E_\mathrm{red}) \rangle \leq \langle v(A), v(E) \rangle \leq \langle v(A), v(E_\mathrm{red}) \rangle,  \]
which completes the proof.
\end{proof}

Next, we extend the support property for indecomposable objects in $\mcP_{\alpha_0}(0)$. 
Recall that we have $\mcP_{\alpha_0}(0)=\mcP_\alpha(0)$ for any $\alpha > \alpha_0$ by Remark \ref{rem:P(1) is abelian}. In particular, $\mcP_{\alpha_0}(0)$ is a finite length \textit{abelian} category (but not just quasi-abelian). Hence, we can take a Jordan--H{\"o}lder filtration for any non-zero object in $\mcP_{\alpha_0}(0)$. 
\begin{lem}\label{lem:lax support phase 0}
Let $E$ be a non-zero object in $\mcP_{\alpha_0}(0)$. Assume that $A$ is not a direct summand of $E$. Then we have
\[ \| v(E) \| < C|Z_{\alpha_0}(E)|. \]
\end{lem}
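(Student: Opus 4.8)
The plan is to reduce the estimate to the massive Jordan--Hölder factors of $E$ and then invoke the support property already available for them (Remark~\ref{rem:known support inequaility}), using Lemma~\ref{lem:norm and red} to control the one dangerous coordinate of the norm. First I would take a Jordan--Hölder filtration of $E$ in the finite length abelian category $\mcP_{\alpha_0}(0)$. By Proposition~\ref{prop:a>a_0}(3) the only massless simple object of phase $0$ is $A$, so the stable factors are copies of $A$ together with massive $\sigma_{\alpha_0}$-stable objects $T_1,\dots,T_k$ of phase $0$, and $v(E)=(\#A)\,v(A)+\sum_{j=1}^{k}v(T_j)$. Since $A$ is not a direct summand, at least one $T_j$ occurs, and Remark~\ref{rem:known support inequaility}(2) applies to each massive $T_j$ of integral phase $0$ to give $\|v(T_j)\|<C|Z_{\alpha_0}(T_j)|$. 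As $A$ and all $T_j$ have phase $0$, we have $Z_{\alpha_0}(E)=\sum_j Z_{\alpha_0}(T_j)\in\bR_{>0}$, so $\sum_j|Z_{\alpha_0}(T_j)|=|Z_{\alpha_0}(E)|$. Hence it is enough to prove the purely numerical estimate $\|v(E)\|\le\sum_{j=1}^{k}\|v(T_j)\|$.

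To prove this estimate I would split the norm as $\|v(E)\|=|\langle v(A),v(E)\rangle|+\sum_{i\ge2}|\langle v_i,v(E)\rangle|$, recalling $v_1=v(A)$. For $i\ge2$ the chosen basis satisfies $\langle v_i,v(A)\rangle=0$, so the copies of $A$ are invisible in these coordinates: $\langle v_i,v(E)\rangle=\sum_j\langle v_i,v(T_j)\rangle$, whence $\sum_{i\ge2}|\langle v_i,v(E)\rangle|\le\sum_j\sum_{i\ge2}|\langle v_i,v(T_j)\rangle|$. The delicate term is $\langle v(A),v(E)\rangle$, since each copy of $A$ contributes $\langle v(A),v(A)\rangle=-2$, so a priori this coordinate can be large while $|Z_{\alpha_0}(E)|$ is unchanged. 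This is exactly where Lemma~\ref{lem:norm and red} enters: as $A$ is not a direct summand of $E$, it yields $|\langle v(A),v(E)\rangle|\le\langle v(A),v(E_{\mathrm{red}})\rangle$, replacing $E$ by the reduced object that has discarded the copies of $A$.

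It then remains to bound $\langle v(A),v(E_{\mathrm{red}})\rangle$ by the $T_j$. The reduction of Remark~\ref{rem:red} removes only copies of $A$, so $E_{\mathrm{red}}$ again lies in $\mcP_{\alpha_0}(0)$ with massive Jordan--Hölder factors precisely $T_1,\dots,T_k$ and with $A$-multiplicity $\ge0$; consequently $\langle v(A),v(E_{\mathrm{red}})\rangle=-2(\#A\text{ in }E_{\mathrm{red}})+\sum_j\langle v(A),v(T_j)\rangle\le\sum_j\langle v(A),v(T_j)\rangle$. Moreover each $\langle v(A),v(T_j)\rangle=\ext^1(A,T_j)\ge0$, because $A$ and $T_j$ are distinct stable objects of the same phase $0$, so $\Hom(A,T_j)=\Hom(T_j,A)=0$ by stability and Serre duality. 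Combining with the $i\ge2$ bound gives $\|v(E)\|\le\sum_j\big(|\langle v(A),v(T_j)\rangle|+\sum_{i\ge2}|\langle v_i,v(T_j)\rangle|\big)=\sum_j\|v(T_j)\|$, and summing the strict support inequalities over the nonempty family $\{T_j\}$ produces $\|v(E)\|<C|Z_{\alpha_0}(E)|$.

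I expect the main obstacle to be the single coordinate $\langle v(A),v(E)\rangle$, the direction pairing nontrivially with the massless class $v(A)$: it is the only place where massless factors could inflate $\|v(E)\|$ without changing $|Z_{\alpha_0}(E)|$, and Lemma~\ref{lem:norm and red} is tailored precisely to control it. A secondary point to verify is that $E_{\mathrm{red}}$ genuinely remains in $\mcP_{\alpha_0}(0)$ with the same massive factors as $E$; this follows from a short phase argument, since $Z_{\alpha_0}(E_{\mathrm{red}})=Z_{\alpha_0}(E)\in\bR_{\ge0}$ and $E$ is semistable of phase $0$, which forces every subquotient arising in the reduction to be of phase $0$ as well.
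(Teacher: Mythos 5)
Your proof is correct. The paper's own argument relies on the same two inputs --- Lemma~\ref{lem:norm and red} and Remark~\ref{rem:known support inequaility} --- but organizes them as an iterative peeling: it constructs a filtration of $E_{\mathrm{red}}$ whose successive quotients are either sums of copies of $A$ or extensions of massive stable objects, re-applying $(-)_{\mathrm{red}}$, Lemma~\ref{lem:norm and red} and the triangle inequality at each stage, and then argues that the process terminates. You instead apply Lemma~\ref{lem:norm and red} exactly once and work purely numerically with the Jordan--H\"older multiset in the finite-length abelian category $\mcP_{\alpha_0}(0)$, splitting the norm coordinatewise: the coordinates $\langle v_i,-\rangle$ for $i\geq 2$ annihilate $v(A)$ and are handled by the triangle inequality, while the dangerous coordinate is controlled by $\langle v(A), v(E_{\mathrm{red}})\rangle \leq \sum_j \langle v(A), v(T_j)\rangle$ together with the observation $\langle v(A), v(T_j)\rangle=\ext^1(A,T_j)\geq 0$ (Schur's lemma for non-isomorphic simples of the same phase plus Serre duality) --- an input the paper's proof does not need. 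Your route buys a cleaner, non-iterative argument with the explicit intermediate inequality $\|v(E)\|\leq\sum_j\|v(T_j)\|$; the paper's route avoids the extra Hom-vanishing step and matches the peeling argument it reuses in the proof of Proposition~\ref{prop:delta-lax support property}. The side points you single out are indeed the ones requiring care --- that a massive Jordan--H\"older factor must occur (else $E$ is massless and hence a direct sum of copies of $A$ by Lemma~\ref{lem:massless is semistable}, contradicting the hypothesis), and that $E_{\mathrm{red}}$ remains in $\mcP_{\alpha_0}(0)$ with non-negative $A$-multiplicity and the same massive factors --- and your phase argument for the latter is sound.
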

\begin{proof}
By Lemma \ref{lem:norm and red}, 
we have $\| v(E) \|\leq \| v(E_\mathrm{red})\|$. 
Note that we have the equalities
\[ \Hom(A,F)=0,~\Hom(E_\mathrm{red}/F,A)=0 \]
for any subobject $F$ of $E_\mathrm{red}$ in $\mcP_{\alpha_0}(0)$. 
In particular, $A$ is not a direct summand of $F$ or $E_\mathrm{red}/F$.

There is the maximum integer $k$ with a filtration 
\[ E^{(k)} \subset \cdots \subset E^{(1)} \subset E^{(0)}=E_\mathrm{red}\]
of $E_\mathrm{red}$ in $\mcP_{\alpha_0}(0)$ such that $E^{(i)}/E^{(i+1)}$ is a $\sigma_{\alpha_0}$-stable object not isomorphic to $A$ for $0 \leq i \leq k-1$. If $\Hom(E^{(k)},A)=0$ holds, the above filtration gives the Jordan-H\"older filtration of $E_\mathrm{red}$. Otherwise, we put \[ E^{(k+1)} \coloneqq E^{(k)}_\mathrm{red}.\] 
Since $\Hom(A, E^{(k)})=0$, we have $E^{(k+1)} \subset E^{(k)}$ and $E^{(k)}/E^{(k+1)} \simeq A^{\oplus N_k}$ for some $N_k \geq 0$.  Then there is an exact sequence
\[ 0 \to A^{\oplus N_k} \to E_\mathrm{red}/E^{(k+1)} \to E_\mathrm{red}/E^{(k)} \to 0 \]
 and \[ (E_\mathrm{red}/E^{(k+1)})_\mathrm{red}=E_\mathrm{red}/E^{(k)}\] holds.
The equality $\Hom(A,E_\mathrm{red}/E^{(k)})=0$ is deduced from the fact that no $\sigma_{\alpha_0}$-stable factor of $E_\mathrm{red}/E^{(k)}$ is isomorphic to $A$.
 Since $Z_{\alpha_0}(A)=0$, the equality 
 \[ 
 Z_{\alpha_0}(E_\mathrm{red}/E^{(k+1)})=Z_{\alpha_0}(E_\mathrm{red}/E^{(k)})
 \]
 holds. 
 By the triangle inequality and Lemma \ref{lem:norm and red}, we have
 \begin{align*}
 \| v(E_\mathrm{red}) \|&=\|v(E_\mathrm{red}/E^{(k+1)})+v(E^{(k+1)}) \| \\
 & \leq \|v(E_\mathrm{red}/E^{(k+1)})\| + \|v(E^{(k+1)})\| \\
 & \leq \|v(E_\mathrm{red}/E^{(k)}) \|+\|v(E^{(k+1)}) \|.
 \end{align*}
By Lemma \ref{lem:massless is semistable} and Remark \ref{rem:known support inequaility}, we obtain 
\[
\|v(E_\mathrm{red}/E^{(k)}) \| < C|Z_{\alpha_0}(E_\mathrm{red}/E^{(k)})|=CZ_{\alpha_0}(E_\mathrm{red}/E^{(k)}), \]
where $C>0$ is a constant as in Remark \ref{rem:known support inequaility}. 

We apply the same procedure to $E^{(k+1)}$ as we did to $E_\mathrm{red}$. If we continue in this way, the process will stop at some point. 
Then we obtain a  filtration 
\[ 0=E^{(n)} \subset E^{(n-1)} \subset \cdots \subset E^{(1)} \subset E^{(0)}=E_\mathrm{red}. \]
of $E_\mathrm{red}$ in $\mcP_{\alpha_0}(0)$ such that $E^{(i)}/E^{(i+1)}$ is a finite direct sum of $A$ or a $\sigma_{\alpha_0}$-stable object which is not isomorphic to $A$. 
Finally, we can obtain 
\[ \| v(E)\| < C|Z_{\alpha_0}(E)|  \]
by repeatedly applying the above arguments.
\end{proof}

\begin{lem}\label{lem:red and interval}
Assume that $E \in \mcP_{\alpha_0}(\phi-\epsilon, \phi+\epsilon)$ holds. 
Then we also have 
\[ E_{\mathrm{red}} \in \mcP_{\alpha_0}(\phi-\epsilon, \phi+\epsilon).\] 
\end{lem}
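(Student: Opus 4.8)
The plan is to reduce the statement to a single upper bound and then prove that bound by a central-charge argument. Since $E_{\mathrm{red}}$ is constructed as an object of the heart $\mcB=\mcP_{\alpha_0}((\phi-\epsilon,\phi-\epsilon+1])$, all of its Harder--Narasimhan phases automatically exceed $\phi-\epsilon$, so $\phi^-_{\sigma_{\alpha_0}}(E_{\mathrm{red}})>\phi-\epsilon$ comes for free (the case $E_{\mathrm{red}}=0$ being trivial). Hence the entire content of the lemma is the inequality $\phi^+_{\sigma_{\alpha_0}}(E_{\mathrm{red}})<\phi+\epsilon$. First I would dispose of the companion sequence (\ref{eq:torsion M}): rewriting $0\to F\to E\to A^{\oplus M}\to 0$ as a triangle $A^{\oplus M}[-1]\to F\to E$ and using the standard estimate $\phi^+(F)\le\max(\phi^+(A^{\oplus M}[-1]),\phi^+(E))$ together with $A^{\oplus M}\in\mcP_{\alpha_0}(0)$ and the standing assumption $0\in(\phi-\epsilon,\phi+\epsilon)$, one gets $\phi^+(F)\le\phi^+(E)<\phi+\epsilon$, and since $F\in\mcB$ this shows $F\in\mcP_{\alpha_0}(\phi-\epsilon,\phi+\epsilon)$.

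The main obstacle is the sequence $0\to A^{\oplus N}\to F\to E_{\mathrm{red}}\to 0$, where $E_{\mathrm{red}}$ is a \emph{quotient}: the naive triangle inequality only gives $\phi^+(E_{\mathrm{red}})\le\max(\phi^+(F),\phi^+(A^{\oplus N})+1)=\max(\phi^+(F),1)$, which is useless since $A^{\oplus N}[1]$ sits in phase $1$. Quotients by a phase-$0$ subobject can genuinely raise the top phase, so the masslessness of $A$ must be exploited. I would argue by contradiction: assume $\theta:=\phi^+(E_{\mathrm{red}})\ge\phi+\epsilon$ and let $P:=(E_{\mathrm{red}})_{\ge\theta}\in\mcP_{\alpha_0}(\theta)$ be the top Harder--Narasimhan factor, a subobject of $E_{\mathrm{red}}$ in $\mcB$ that is semistable of phase $\theta$. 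Because $-\epsilon<\phi<\epsilon$ one checks $\theta\in(0,1)$, so $\theta\notin\bZ$ and therefore $P$ is massive.

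The crucial step is to pull $P$ back along the quotient $q\colon F\twoheadrightarrow E_{\mathrm{red}}$ of (\ref{eq:torsion N}): setting $W:=q^{-1}(P)\subseteq F$ gives a short exact sequence $0\to A^{\oplus N}\to W\to P\to 0$ in $\mcB$ with $F/W\cong (E_{\mathrm{red}})_{<\theta}$. I would then check $W\in\mcP_{\alpha_0}(\phi-\epsilon,\phi+\epsilon)$: from $0\to W\to F\to F/W\to 0$ and $\phi^+(F/W)<\theta\le\phi-\epsilon+1$ one obtains $\phi^+(W)\le\max(\phi^+(F),\phi^+(F/W)-1)<\phi+\epsilon$, while $\phi^-(W)>\phi-\epsilon$ holds since $W\in\mcB$. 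The payoff is the central charge: as $Z_{\alpha_0}(A)=0$, the sequence $0\to A^{\oplus N}\to W\to P\to 0$ forces $Z_{\alpha_0}(P)=Z_{\alpha_0}(W)$. Now $P$ is massive and semistable of phase $\theta$, so $\tfrac1\pi\arg Z_{\alpha_0}(P)=\theta\ge\phi+\epsilon$; on the other hand, if $W$ is massive then, since the interval has length $2\epsilon<1$, $\tfrac1\pi\arg Z_{\alpha_0}(W)\in[\phi^-(W),\phi^+(W)]\subseteq(\phi-\epsilon,\phi+\epsilon)$, and if $W$ is massless then $Z_{\alpha_0}(W)=0$, contradicting the massiveness of $P$. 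Either way $Z_{\alpha_0}(P)=Z_{\alpha_0}(W)$ is impossible, so $\theta<\phi+\epsilon$, completing the proof. I expect the only delicate points to be the routine verifications that $P$ and $W$ genuinely lie in $\mcB$ and fit into the stated sequences, and the standard fact that $\tfrac1\pi\arg Z_{\alpha_0}$ of a massive object of $\mcP_{\alpha_0}(I)$ lies between its extreme phases when $I$ has length less than one.
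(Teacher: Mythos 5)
Your proof is correct, and in the decisive step it takes a genuinely different (and more robust) route than the paper. The paper's own argument is two lines: let $E_1\subset E_{\mathrm{red}}$ be the top Harder--Narasimhan factor; if $\phi_{\alpha_0}(E_1)>\phi+\epsilon$, then $\Hom(E_1,E_{\mathrm{red}})=0$, contradicting $E_1\subset E_{\mathrm{red}}$. As you correctly sensed, that Hom-vanishing is not a formal consequence of the slicing axioms: chasing the two defining sequences of $E_{\mathrm{red}}$ only yields an injection $\Hom(E_1,E_{\mathrm{red}})\hookrightarrow \Ext^1(E_1,A)^{\oplus N}$ (because $\Hom(E_1,F)=0$ for phase reasons), and $\Ext^1(E_1,A)=\Hom(E_1,A[1])$ has no phase-theoretic reason to vanish since $\phi_{\alpha_0}(E_1)<1=\phi_{\alpha_0}(A[1])$. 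Your device of pulling the top factor $P$ back along $F\twoheadrightarrow E_{\mathrm{red}}$ to a genuine subobject $W\subseteq F\subseteq E$ of the heart $\mcB$ --- which therefore does lie in $\mcP_{\alpha_0}(\phi-\epsilon,\phi+\epsilon)$ --- and then playing $Z_{\alpha_0}(W)=Z_{\alpha_0}(P)$ off against the two disjoint angular sectors is exactly what is needed to close this step; it exploits $Z_{\alpha_0}(A)=0$ at the point where the pure Hom argument stalls. The auxiliary verifications you flag ($P$ is massive because its phase lies in $(0,1)$ while massless semistable objects are shifts of $A^{\oplus N}$ of integral phase; $\phi^+(F),\phi^+(W)<\phi+\epsilon$ via the triangle estimates; the argument of the central charge of a massive object of $\mcP_{\alpha_0}(I)$ lies in $\pi I$ when $I$ has length less than one) are all standard and correctly invoked, so the argument is complete.
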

\begin{proof}
Recall that $E_\mathrm{red}$ is an object in $\mcB$.
There is a $\sigma_{\alpha_0}$-semistable object $E_1 \subset E_\mathrm{red}$ in $\mcB$ such that  the equality $\phi_{\alpha_0}(E_1)=\phi^+_{\alpha_0}(E_\mathrm{red})$ holds. 
If $\phi^+_{\alpha_0}(E_\mathrm{red})>\phi+\epsilon$ holds, then we have $\Hom(E_1, E_\mathrm{red})=0$. Since $E_1 \subset E_\mathrm{red}$, this is a contradiction. 
\end{proof}

\vspace{3mm}
\begin{proof}[Proof of Proposition \ref{prop:delta-lax support property}]

Take a massive indecomposable object $E$ in the quasi-abelian category $\mcP_{\alpha_0}(\phi-\epsilon,\phi+\epsilon)$. By Lemma \ref{lem:red and interval}, we also have 
\[ E_\mathrm{red} \in \mcP_{\alpha_0}(\phi-\epsilon,\phi+\epsilon). \]
Take the Harder--Narasimhan filtration of $E_\mathrm{red}$ with respect to $\sigma_{\alpha_0}$:
\[ 0=E_0 \subset E_1 \subset E_2 \subset \cdots \subset E_{n-1} \subset E_n = E_\mathrm{red} \]
with the $\sigma_{\alpha_0}$-semistable factor $A_i\coloneqq E_i/E_{i-1}$ for $1 \leq i \leq n$.
Take the maximum integer $k \geq 0$ satisfying $\phi_{\alpha_0}(A_k)>0$. Note that $E_k$ is contained in  $\mcP_{\alpha_0}(0,\phi+\epsilon)$.
On the other hand, the quotient $E_\mathrm{red}/E_k$ is contained in $\mcP_{\alpha_0}((\phi-\epsilon, 0])$. Since $\Hom(E_\mathrm{red}/E_k, A)=0$, the object $A$ is not a direct summand of $E_\mathrm{red}/E_k$. 
We now consider the object
\[ \hat{E} \coloneqq (E_\mathrm{red}/E_k)_\mathrm{red} \in \mcP_{\alpha_0}((\phi-\epsilon,0]).\]
Take the Harder--Narasimhan factors $\hat{A}_1, \cdots, \hat{A}_t$ of $\hat{E}$ with respect to $\sigma_{\alpha_0}$ such that $\phi_{\alpha_{0}}(\hat{A}_1)> \cdots >\phi_{\alpha_0}(\hat{A}_t)$.
If $\phi_{\alpha_0}(\hat{A}_1)<0$ holds, we have
\[ \|v(\hat{E})\|<\frac{C}{\cos(2\pi\epsilon )}|Z_{\alpha_0}(\hat{E}) | \]
by Lemma \ref{lem:lax support without massless}. Assume that $\phi_{\alpha_0}(\hat{A}_1)=0$ holds. Since $\Hom(A,\hat{E})=0$, the object $A$ is not a direct summand of $\hat{A}_1$. 
Hence we can apply Lemma \ref{lem:lax support phase 0} to obtain 
$\|v(\hat{A}_1)\| < C |Z_{\alpha_0}(\hat{A}_1)|$. 
Since we have $\phi-\epsilon < \phi(\hat{A}_i)<0$ for $i=2, \cdots, t$, 
the argument as in the proof of Lemma \ref{lem:lax support without massless} now proves that 
\[ \| v(\hat{E}) \|< \frac{C}{\cos(2\pi\epsilon)}|Z_{\alpha_0}(\hat{E}) |.\]
Therefore, we obtain
\begin{align*}
\| v(E) \| &\leq \|v(E_\mathrm{red}) \|\\
&\leq \| v(E_\mathrm{red}/E_k) \|+\|v(E_k) \|\\
&\leq \| v(\hat{E})\|+ \|v(E_k)\|\\
&<\frac{C}{\cos(2\pi\epsilon)}(|Z_{\alpha_0}(\hat{E})|+|Z_{\alpha_0}(E_k) |)\\
&=\frac{C}{\cos(2\pi\epsilon)}(|Z_{\alpha_0}(E_\mathrm{red}/E_k)|+|Z_{\alpha_0}(E_k) |)\\
&\leq \frac{C}{\cos^2(2\pi\epsilon)}|Z_{\alpha_0}(E_\mathrm{red}) |\\
&=\frac{C}{\cos^2(2\pi\epsilon)}|Z_{\alpha_0}(E)|.
\end{align*}
Thus, we have proved Proposition \ref{prop:delta-lax support property}.
\end{proof}

\bibliographystyle{alpha}
\bibliography{maths}

\end{document}